\theoremstyle{plain}
\newtheorem{thm}{Theorem}[section] 
\newtheorem{defn}[thm]{Definition} 
\newtheorem{exmp}[thm]{Example} 
\newtheorem{lem}[thm]{Lemma} 
\newtheorem{pro}[thm]{Proposition} 
\newtheorem{rem}[thm]{Remark} 
\newtheorem{co}[thm]{Corollary}
\newtheorem{ass}[thm]{Assumption}
\newcommand{\vast}{\bBigg@{3.2}}
\newcommand{\Vast}{\bBigg@{5}}
\begin{document}
\begin{center}
\section*{Limit theorems for multivariate Brownian semistationary processes and feasible results}
\subsection*{Riccardo Passeggeri\footnote[1]{Department of Mathematics, Imperial College London, UK. Email: riccardo.passeggeri14@imperial.ac.uk} and Almut E.~D.~Veraart\footnote[2]{Department of Mathematics, Imperial College London, UK. Email: a.veraart@imperial.ac.uk}}
\today
\end{center}
\begin{abstract}
In this paper we introduce the \textit{multivariate} Brownian semistationary (BSS) processes and study the joint asymptotic behaviour of its realised covariation using in-fill asymptotics. First, we present a central limit theorem for general stationary multivariate Gaussian processes, which are not necessarily semimartingales. Then, we show weak laws of large numbers, central limit theorems and feasible results for BSS processes. An explicit example based on the so-called gamma kernels is also provided.
\\
\\
\textbf{Key words: }multivariate Brownian semistationary process; central limit theorem; law of large numbers; feasible; non-semimartingale; Wiener chaos; high frequency data; intermittency; gamma kernel.
\end{abstract}
\section{Introduction}
\indent The univariate Brownian semistationary (BSS) process is a stochastic process of the following form:
\begin{equation*}
Y_{t}=\mu+\int_{-\infty}^{t}g(t-s)\sigma_{s}dW_{s}+\int_{-\infty}^{t}q(t-s)a_{s}ds,
\end{equation*}
where $\mu$ is a constant, $W$ is a Brownian measure on $\mathbb{R}$, $g$ and $q$ are non-negative deterministic functions on $\mathbb{R}$, with $g(t)=q(t)=0$ for $t\leq 0$, and $\sigma$ and $a$ are c\`{a}dl\`{a}g processes. The name Brownian semistationary process comes from the fact that when $\sigma$ is stationary and independent of $W$, and $a$ is stationary then $Y$ is stationary. These processes were firstly introduced in \cite{BSS} and, since then, they have been extensively used in applications due to their flexibility and, thus, their capacity of modelling a variety of empirical phenomena. Two of the most notable fields of applications are turbulence and finance.\\
In the context of turbulence, where the process $\sigma$ represents the intermittency of the dynamics, these processes are able to reproduce the key stylized features of turbulence data, such as homogeneity, stationarity, skewness, isotropy and certain scaling laws (see \cite{Pakkanen,CHPP} and discussion therein). In finance, the BSS process has been applied to the modelling of energy spot prices (\cite{BBV,last}) and of logarithmic volatility of futures (\cite{logarithmicvolatilityPakkanen}), among others. Furthermore, fast and efficient simulation schemes for the univariate BSS are available (\cite{scheme}).

One of the key aspects of the BSS process that has been analysed in great detail in the last decade is the asymptotic behaviour of its realised power variation. The realised power variation of a process $Y_{t}$ is the sum of absolute powers of increments of a process, \textit{i.e.}
\begin{equation*}
\sum_{i=1}^{\lfloor nt \rfloor}\left|\Delta_{i}^{n}Y\right|^{r},\quad\text{where}\quad \Delta_{i}^{n}Y:=Y_{\frac{i}{n}}-Y_{\frac{i-1}{n}}.
\end{equation*}
For general semimartingales, the study of realised variation has a pivotal role in estimating the key aspects of the process under consideration, \textit{e.g}.~the integrated squared volatility given by $\int_{0}^{t}\sigma^{2}_{s}ds$ (see \cite{BCD} for further discussions).
This has led to the development of numerous works on this topic (see \cite{Jacod} and reference therein). On the other hand, the BSS process is not in general a semimartingale and the theory of realised power variation for semimartingales does not apply in this case. New results based on different mathematical tools, mainly the ones presented in the works of Peccati, Nourdin and coauthors (see \cite{NP} and reference therein), have been obtained. In \cite{BCD} the multipower variation for BSS processes is presented, while in \cite{Andrea1} Granelli and Veraart obtain the realised covariation for the bivariate BSS without drift. It is important to mention that in the general multivariate setting we have the work \cite{Econometrica} for the semimartingale case, but no work for the case of BSS processes outside the semimartingale framework. 

In this article we introduce the multivariate BSS process, study the joint asymptotic behaviour of its realised covariation, present feasible results and relevant examples. In particular, we will study the asymptotic behaviour of
\begin{equation*}
\sum_{i=1}^{\lfloor nt\rfloor}\left(\frac{\Delta_{i}^{n}Y^{(k)}}{\tau_{n}^{(k)}}\frac{\Delta_{i}^{n}Y^{(l)}}{\tau_{n}^{(l)}}\right)_{k,l=1,...,p},
\end{equation*}
where $p\in\mathbb{N}$, $\tau_{n}^{(j)}>0$ and $Y_{t}^{(j)}$ is the $j$-th component of the multivariate BSS process, for $j=1,...,p$. 
This work is motivated by the manifold applications of the BSS process and it is not just an extension to the multivariate case of the results presented in \cite{BCD} and \cite{Andrea1}. Indeed, in these previous works, the realised power (co)variation was always scaled by a scaling factor ($\tau_{n}$) restricted to a specific structure. We eliminate this restriction and this enables us to obtain all the feasible results presented in this work, which were not obtainable otherwise.\\
We remark that despite the more general theory developed here, no additional assumptions will be added other than the ones already introduced in \cite{BCD} and \cite{Andrea1} (but used in a multivariate setting).

Due to the various potential applications of the multivariate BSS process, it appears natural to derive feasible results, namely results that can be computed directly from real data. We focus on two objects:
\begin{equation*}
\left(\frac{\sum_{i=1}^{\lfloor nt\rfloor}\Delta^{n}_{i}Y^{(k)}\Delta^{n}_{i}Y^{(l)}}{\sqrt{\sum_{i=1}^{\lfloor nt\rfloor}\left(\Delta^{n}_{i}Y^{(k)}\right)^{2}}\sqrt{\sum_{i=1}^{\lfloor nt\rfloor}\left(\Delta^{n}_{i}Y^{(l)}\right)^{2}}}\right)_{k,l=1,...,p}\quad\text{and}\quad\left(\frac{\sum_{i=1}^{\lfloor nt\rfloor}\Delta^{n}_{i}Y^{(k)}\Delta^{n}_{i}Y^{(l)}}{\sum_{i=1}^{\lfloor nT\rfloor}\Delta^{n}_{i}Y^{(k)}\Delta^{n}_{i}Y^{(l)}}\right)_{k,l=1,...,p}.
\end{equation*} 
Both objects belong to the class of \textit{realised (co)variation ratio}. Similar ratios tailored to the univariate case have been used in the literature to construct a consistent estimator of key parameters, \textit{e.g}.~the smoothness parameter $\alpha$ of the BSS process in \cite{CHPP}. The second object can be defined as the \textit{relative covolatility} of the BSS process, since it represents the multivariate representation of the relative volatility concept introduced in \cite{Pakkanen}.

This paper is structured as follows. In Chapter \ref{preliminary}, we introduce the multivariate BSS process, the general setting and the basic mathematical concepts of this work. It is usually the case that when a univariate process is extended to the multivariate case there is more than one way to do it. Hence, we present two possible multivariate extensions of the one-dimensional BSS process. Further, in the same chapter we introduce the Gaussian core, which is a key object for the mathematical understanding and estimation of the BSS process. In Chapters \ref{JOINT Gaussian} and \ref{CLT-BSS-chapter}, we present the joint central limit theorem (CLT) for general multivariate stationary Gaussian processes and for the BSS processes, respectively. In particular, in these two chapters we are going to present different cases depending on which multivariate extension of the BSS process and values of the scaling factor $\tau_{n}$ are considered. In Chapters \ref{WEAK}, \ref{FEASIBLE} and \ref{EXAMPLE}, we prove the weak law of large numbers (WLLN), derive the feasible results and present an example of our process. In Chapter \ref{CONCLUSION}, we provide some final remarks and open questions.
\section{Preliminaries}\label{preliminary}
In this section we are going to explore the setting and some of the basic mathematical tools used throughout this article.
\\
Let $T>0$ denote a finite time horizon and let $(\Omega,\mathcal{F},(\mathcal{F}_{t}),\mathbb{P})$ be a filtered complete probability space. In the following we always assume that $p,n\in\mathbb{N}$ and that $\mathcal{B}(\mathbb{R})$ denotes the class of Borel sets of $\mathbb{R}$. We recall the definition of a Brownian measure.
\begin{defn}
An $\mathcal{F}_{t}$-adapted Brownian measure $W:\Omega\times\mathcal{B}(\mathbb{R})\rightarrow\mathbb{R}$ is a Gaussian stochastic measure such that, if $A\in\mathcal{B}(\mathbb{R})$ with $\mathbb{E}[(W(A))^{2}]<\infty$, then $W(A)\sim N(0,Leb(A))$, where $Leb$ is the Lebesgue measure. Moreover, if $A\subseteq[t,\infty)$, then $W(A)$ is independent of $\mathcal{F}_{t}$. 
\end{defn}
\noindent We will assume that $(\Omega,\mathcal{F},(\mathcal{F}_{t}),\mathbb{P})$ supports $p$ independent $\mathcal{F}_{t}$-Brownian measures on $\mathbb{R}$. Consider the stochastic process $\{\textbf{G}_{t}\}_{t\in[0,T]}$ defined as
\begin{equation*}
\textbf{G}_{t}:=\begin{pmatrix}
     G^{(1)}_{t}   \\
     \vdots\\
     G^{(p)}_{t} 
    \end{pmatrix}=\int_{-\infty}^{t}\begin{pmatrix}
   g^{(1,1)}(t-s) & \dots & g^{(1,p)}(t-s)  \\
   \vdots & \ddots & \vdots\\
   g^{(p,1)}(t-s) & \dots & g^{(p,p)}(t-s)
  \end{pmatrix} \begin{pmatrix}
     dW^{(1)}_{s}   \\
     \vdots\\
     dW^{(p)}_{s} 
    \end{pmatrix},
\end{equation*}
where the integral has to be considered componentwise, for $i,j=1,...,p$, $g^{(i,j)}\in L^{2}((0,\infty))$ are deterministic functions and continuous on $\mathbb{R}\setminus\{0\}$, and $(W^{(1)},...,W^{(p)})$ are jointly Gaussian $\mathcal{F}_{t}$-Brownian measures on $\mathbb{R}$. Thus, we have $G^{(i)}_{t}=\sum_{j=1}^{p}\int_{-\infty}^{t}g^{(i,j)}(t-s)dW^{(j)}_{s}$. We call the process $\{\textbf{G}_{t}\}_{t\in[0,T]}$ the multivariate Gaussian core, and it is possible to see that it is a stationary Gaussian process. The Gaussian core will play a crucial role in the limit theorems for the BSS process.
\begin{rem}
Notice that we do not assume independence of the Brownian measures. The only requirement is that they are jointly Gaussian so that the process $\{\textbf{G}_{t}\}_{t\in[0,T]}$ is Gaussian. This level of generality is needed to prove the central limit theorem (CLT) for the BSS process. In fact, as we will later see, proving a CLT for the Gaussian core driven by independent Brownian measures is not sufficient for proving the CLT for the BSS process.
\end{rem}
For $j\in\{1,...,p\}$ and $l\in\{1,...,n\}$, let $\tau_{n}^{(j)}$ be a (scaling) constant depending on $G^{(j)}$ and $n$ whose explicit form will be introduced later on, and let $\Delta^{n}_{l}G^{(j)}:=G^{(j)}_{\frac{l}{n}}-G^{(j)}_{\frac{l-1}{n}}$. Since $\{\textbf{G}_{t}\}_{t\in[0,T]}$ is a Gaussian process, we can use the machinery of Malliavin calculus. In particular, let $\mathcal{H}$ be the Hilbert space generated by the random variables given by:
\begin{equation}\label{generate}
\left(\dfrac{\Delta^{n}_{l}G^{(j)}}{\tau_{n}^{(j)}}\right)_{n\geq1,1\leq l\leq \lfloor nt\rfloor,j\in\{1,...,p\}}
\end{equation}
equipped with the scalar product $\langle\cdot,\cdot \rangle_{\mathcal{H}}$ induced by $L^{2}(\Omega,\mathcal{F},\mathbb{P})$, \textit{i.e}.~for $X,Y\in\mathcal{H}$ we have $\langle X,Y\rangle_{\mathcal{H}}=\mathbb{E}[XY]$. Notice that $\mathcal{H}$ is a closed subset of $L^{2}(\Omega,\mathcal{F},\mathbb{P})$ composed by $L^{2}$-Gaussian random variables generated by $(\ref{generate})$. In particular, we have an isonormal Gaussian process because $(\ref{generate})$ are jointly Gaussian random variables since they are (rescaled) increments of the Gaussian process $\{\textbf{G}_{t}\}_{t\in[0,T]}$. Following the setting of \cite{NP}, we assume that $\mathcal{F}$ is generated by $\mathcal{H}$. Finally, recall that any element of $L^{2}(\Omega,\mathcal{F},\mathbb{P})$ has a unique decomposition in terms of the Wiener chaos expansion of $\mathcal{H}$ (see \cite{NP}).

Next, we present and define the multivariate BSS process. Since there are several ways to generalise a univariate BSS process to a multivariate one, we will present two particularly relevant multivariate extensions.
\begin{defn}\label{BSS}
Consider $p$ Brownian measures $W^{(1)},...,W^{(p)}$. Further consider $p^{2}$ non-negative deterministic functions $g^{(1,1)},...,g^{(p,p)}\in L^{2}((0,\infty))$ which are continuous on $\mathbb{R}\setminus\{0\}$ and such that $g^{(i,j)}(t)=0$ for $t\leq0$ and for $i,j=1,...,p$. Let $\sigma^{(1,1)},...,\sigma^{(p,p)}$ be c\`{a}dl\`{a}g, $\mathcal{F}_{t}$-adapted stochastic processes and assume for all $t\in[0,T]$ and $i,j,k=1,...,p$ that $\int_{-\infty}^{t}(g^{(i,j)}(t-s)\sigma_{s}^{(j,k)})^{2}ds<\infty$. Let $\{\textbf{U}_{t}\}_{t\in[0,T]}=\{(U_{t}^{(1)},...,U_{t}^{(p)})\}_{t\in[0,T]}$ be a stochastic process in the nature of a drift term. Define,
\begin{equation*}
\textbf{Y}_{t}:=\begin{pmatrix}
     Y^{(1)}_{t}   \\\vdots\\
     Y^{(p)}_{t} 
    \end{pmatrix}=\int_{-\infty}^{t} \begin{pmatrix}
             g^{(1,1)}(t-s)  &\cdots  & g^{(1,p)}(t-s) \\ \vdots &
             \ddots  & \vdots \\ g^{(p,1)}(t-s) & \cdots & g^{(p,p)}(t-s)       
            \end{pmatrix} 
            \begin{pmatrix}
                         \sigma^{(1,1)}_{s}  &\cdots  & \sigma^{(1,p)}_{s} \\ \vdots &
                         \ddots  & \vdots \\ \sigma^{(p,1)}_{s} & \cdots & \sigma^{(p,p)}_{s}       
                        \end{pmatrix} 
  \begin{pmatrix}
     dW^{(1)}_{s}   \\\vdots\\
     dW^{(p)}_{s} 
    \end{pmatrix}+\begin{pmatrix}
         U^{(1)}_{t}   \\\vdots\\
         U^{(p)}_{t} 
        \end{pmatrix}
\end{equation*}
and
\begin{equation*}
\textbf{X}_{t}:=\begin{pmatrix}
     X^{(1)}_{t}   \\\vdots\\
     X^{(p)}_{t} 
    \end{pmatrix}=\int_{-\infty}^{t} \begin{pmatrix}
             g^{(1,1)}(t-s)\sigma^{(1,1)}_{s}  &\cdots  & g^{(1,p)}(t-s)\sigma^{(1,p)}_{s} \\ \vdots &
             \ddots  & \vdots \\ g^{(p,1)}(t-s)\sigma^{(p,1)}_{s} & \cdots & g^{(p,p)}(t-s)\sigma^{(p,p)}_{s}       
            \end{pmatrix} 
  \begin{pmatrix}
     dW^{(1)}_{s}   \\\vdots\\
     dW^{(p)}_{s} 
    \end{pmatrix}+\begin{pmatrix}
             U^{(1)}_{t}   \\\vdots\\
             U^{(p)}_{t} 
            \end{pmatrix}.
\end{equation*}
Then the vector valued processes $\{\textbf{Y}_{t}\}_{t\in[0,T]}$ and $\{\textbf{X}_{t}\}_{t\in[0,T]}$ are both called multivariate Brownian semistationary processes.
\end{defn}
\begin{rem}
	Note that we consider the case where the components are of dimension $p\times p$ or $p\times 1$. Extensions to non-square matrices are straightforward to obtain.
\end{rem}
Now, we will discuss properties of the tensor product in Hilbert spaces. Consider two real Hilbert spaces $\mathcal{H}_{1}$ and $\mathcal{H}_{2}$ endowed with the inner products $\langle\cdot,\cdot \rangle_{\mathcal{H}_{1}}$ and $\langle\cdot,\cdot \rangle_{\mathcal{H}_{2}}$ respectively. Given $f,x\in\mathcal{H}_{1}$ and $g,y\in\mathcal{H}_{2}$, we denote by $[f\otimes g](x,y):=\langle x,f \rangle_{\mathcal{H}_{1}}\langle y,g \rangle_{\mathcal{H}_{2}}$ the bilinear form $f\otimes g:\mathcal{H}_{1}\times \mathcal{H}_{2}\rightarrow\mathbb{R}$. Let $\mathcal{K}$ be the set of all finite linear combinations of such bilinear forms, namely $\mathcal{K}:=Span(f\otimes g:f\in\mathcal{H}_{1},g\in\mathcal{H}_{2})$. We are going to present a result on the inner product for this space.
\begin{lem}
The bilinear form $\langle\langle\cdot,\cdot \rangle \rangle$ on $\mathcal{K}$ defined by $\langle\langle f_{1}\otimes g_{1},f_{2}\otimes g_{2} \rangle \rangle:=\langle f_{1},f_{2} \rangle_{\mathcal{H}_{1}}\langle g_{1},g_{2} \rangle_{\mathcal{H}_{2}}$ is symmetric, well defined and positive definite, and thus defines a scalar product on $\mathcal{K}$.
\end{lem}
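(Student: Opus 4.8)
The statement is the standard construction of the inner product on the algebraic tensor product of two Hilbert spaces, so the plan is to follow the classical route: fix representations, read off symmetry by inspection, establish well-definedness by a double-summation argument, and prove positive definiteness by orthonormalising. Throughout, an element $u\in\mathcal{K}$ is written as a finite sum $u=\sum_{i}a_{i}\,\phi_{i}\otimes\psi_{i}$ with $\phi_{i}\in\mathcal{H}_{1}$, $\psi_{i}\in\mathcal{H}_{2}$, $a_{i}\in\mathbb{R}$, and similarly $v=\sum_{j}b_{j}\,x_{j}\otimes y_{j}$; the form is then the bilinear extension of the stated rule, i.e.\ $\langle\langle u,v\rangle\rangle=\sum_{i,j}a_{i}b_{j}\langle\phi_{i},x_{j}\rangle_{\mathcal{H}_{1}}\langle\psi_{i},y_{j}\rangle_{\mathcal{H}_{2}}$.

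Symmetry is the easy part: since $\mathcal{H}_{1},\mathcal{H}_{2}$ are \emph{real} Hilbert spaces, $\langle\phi_{i},x_{j}\rangle_{\mathcal{H}_{1}}=\langle x_{j},\phi_{i}\rangle_{\mathcal{H}_{1}}$ and likewise for the second factor, so the double sum defining $\langle\langle u,v\rangle\rangle$ agrees term by term with the one defining $\langle\langle v,u\rangle\rangle$. Well-definedness is the step that actually requires an argument, because the representations of $u$ and $v$ above are highly non-unique; I would handle it by evaluating the double sum in the two possible orders. Summing over $j$ first and using $[x_{j}\otimes y_{j}](\phi_{i},\psi_{i})=\langle\phi_{i},x_{j}\rangle_{\mathcal{H}_{1}}\langle\psi_{i},y_{j}\rangle_{\mathcal{H}_{2}}$ gives $\langle\langle u,v\rangle\rangle=\sum_{i}a_{i}\,v(\phi_{i},\psi_{i})$, where $v(\cdot,\cdot)$ is the bilinear form $v$ itself; the right-hand side no longer refers to any representation of $v$, so the value is independent of how $v$ is written. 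Summing over $i$ first gives, symmetrically, $\langle\langle u,v\rangle\rangle=\sum_{j}b_{j}\,u(x_{j},y_{j})$, which is independent of the representation of $u$. Combining the two observations shows $\langle\langle u,v\rangle\rangle$ depends only on $u$ and $v$ as elements of $\mathcal{K}$.

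For positive definiteness, fix $u=\sum_{i=1}^{m}a_{i}\,\phi_{i}\otimes\psi_{i}\neq0$. Applying the Gram--Schmidt procedure separately to $\{\phi_{1},\dots,\phi_{m}\}$ and to $\{\psi_{1},\dots,\psi_{m}\}$ produces finite orthonormal systems $\{e_{1},\dots,e_{r}\}\subset\mathcal{H}_{1}$ and $\{h_{1},\dots,h_{s}\}\subset\mathcal{H}_{2}$ and scalars $c_{kl}$ with $u=\sum_{k=1}^{r}\sum_{l=1}^{s}c_{kl}\,e_{k}\otimes h_{l}$. By the well-definedness just established we may compute $\langle\langle u,u\rangle\rangle$ from this representation, and orthonormality collapses the resulting quadruple sum to $\langle\langle u,u\rangle\rangle=\sum_{k,l}c_{kl}^{2}\geq0$. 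Moreover $c_{kl}=u(e_{k},h_{l})$ (apply the identity $\langle\langle u,x\otimes y\rangle\rangle=u(x,y)$ from the previous paragraph with $x=e_{k}$, $y=h_{l}$, then use orthonormality), so $\langle\langle u,u\rangle\rangle=0$ forces every $c_{kl}=0$, hence $u=0$; the reverse implication $u=0\Rightarrow\langle\langle u,u\rangle\rangle=0$ is trivial. Putting symmetry, well-definedness and positive definiteness together shows $\langle\langle\cdot,\cdot\rangle\rangle$ is a genuine scalar product on $\mathcal{K}$.

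The only genuinely delicate point is well-definedness, and the device that makes it routine is recognising the double sum as $\sum_{i}a_{i}v(\phi_{i},\psi_{i})$ (equivalently $\sum_{j}b_{j}u(x_{j},y_{j})$), an expression that manifestly does not see the chosen representation of the other factor; everything else is bookkeeping with finite orthonormal systems.
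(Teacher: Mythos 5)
Your proof is correct and is precisely the classical argument that the paper delegates to Reed and Simon rather than writing out: symmetry from the symmetry of the real inner products, well-definedness by reading the double sum as $\sum_i a_i v(\phi_i,\psi_i)=\sum_j b_j u(x_j,y_j)$, and positive definiteness via Gram--Schmidt. No discrepancy with the paper's (cited) proof.
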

\begin{proof}
This is a well known result, for details see Reed and Simon's book \cite{ReedBook}.
\end{proof}
\noindent Observe that $\mathcal{K}$ endowed with $\langle\langle\cdot,\cdot \rangle \rangle$ is not complete. In the next three definitions we introduce the notion of a tensor product between Hilbert spaces, and the symmetrisation and contraction of a tensor product.
\begin{defn}
The tensor product of the Hilbert spaces $\mathcal{H}_{1}$ and $\mathcal{H}_{2}$ is the Hilbert space $\mathcal{H}_{1}\otimes \mathcal{H}_{2}$ defined to be the completion of $\mathcal{K}$ under the scalar product $\langle\langle\cdot,\cdot \rangle \rangle$. Further, we denote by $\mathcal{H}_{1}^{\otimes n}$ the $n$-fold tensor product between $\mathcal{H}_{1}$ and itself.
\end{defn} 
\begin{defn}
If $f\in\mathcal{H}^{\otimes n}$ is of the form $f=h_{1}\otimes \cdots h_{n}$ for $h_{1},...,h_{n}\in\mathcal{H}$, then the symmetrisation of $f$, denoted by $\tilde{f}$, is defined by $\tilde{f}:=\frac{1}{n!}\sum_{\sigma}h_{\sigma(1)}\otimes \cdots h_{\sigma(n)}$, where the sum is taken over all permutations of $\{1,...,n\}$. The closed subspace of $\mathcal{H}^{\otimes n}$ generated by the elements of the form $\tilde{f}$ is called the $n$-fold symmetric tensor product of $\mathcal{H}$, and is denoted by $\mathcal{H}^{\odot n}$.
\end{defn}
\begin{defn}
Let $g=g_{1}\otimes\cdots\otimes g_{n}\in\mathcal{H}^{\otimes n}$ and $h=h_{1}\otimes\cdots\otimes h_{n}\in\mathcal{H}^{\otimes m}$. For any $0\leq p\leq n\wedge m$, we define the $p$-th contraction of $g\otimes h$ as the following element of $\mathcal{H}^{\otimes m+n-p}$: $g\otimes_{p} h:=\langle g_{1},h_{1} \rangle_{\mathcal{H}}\cdots \langle g_{p},h_{p} \rangle_{\mathcal{H}} g_{p+1}\otimes\cdots\otimes g_{n}\otimes h_{p+1}\otimes\cdots\otimes h_{m}$. Note that, even if $g$ and $h$ are symmetric, their $p$-th contraction is not, in general, a symmetric tensor. We therefore denote by $g\tilde{\otimes}_{p}h$ its symmetrisation.
\end{defn}
Let us now move to the discussion of multiple integrals in the Malliavin calculus setting (see section 2.7 of \cite{NP}). We denote by $I_{p}:\mathcal{H}^{\odot p}\rightarrow\mathcal{W}_{p}$ the isometry from the symmetric tensor product $\mathcal{H}^{\odot p}$, equipped with the norm $\sqrt{p!}\|\cdot\|_{\mathcal{H}^{\otimes p}}$, onto the $p$-th Wiener chaos $\mathcal{W}_{p}$. In other words, the image of a $p$-th multiple integral lies in the $p$-th Wiener chaos. The first property that we are going to present is the \textit{isometry property of integrals}.
\begin{pro}\label{isometry property of integrals}
Fix integers $1\leq q\leq p$, as well as $f\in\mathcal{H}^{\odot p}$ and $g\in\mathcal{H}^{\odot q}$. We have
\begin{equation*}
\mathbb{E}[I_{p}(f)I_{q}(g)]=\begin{cases}
p!\langle f,g\rangle_{\mathcal{H}^{\otimes p}},\quad\text{if }p=q, \\
0,\quad\text{otherwise}.
\end{cases}
\end{equation*}
\end{pro}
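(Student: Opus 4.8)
The plan is to obtain both cases from two ingredients that are already available in the excerpt: the isometry property of $I_{p}$ stated just above (namely, "$I_{p}$ is an isometry from $\mathcal{H}^{\odot p}$ equipped with $\sqrt{p!}\,\|\cdot\|_{\mathcal{H}^{\otimes p}}$ onto $\mathcal{W}_{p}$" means precisely $\mathbb{E}[I_{p}(h)^{2}] = p!\,\|h\|_{\mathcal{H}^{\otimes p}}^{2}$ for every $h\in\mathcal{H}^{\odot p}$), together with the mutual orthogonality in $L^{2}(\Omega,\mathcal{F},\mathbb{P})$ of the Wiener chaoses $\{\mathcal{W}_{k}\}_{k\geq 0}$, which underpins the uniqueness of the Wiener chaos decomposition recalled earlier in this section (see \cite{NP}).

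For the case $p\neq q$: by construction $I_{p}(f)\in\mathcal{W}_{p}$ and $I_{q}(g)\in\mathcal{W}_{q}$; since $\mathcal{W}_{p}\perp\mathcal{W}_{q}$ as subspaces of $L^{2}(\Omega,\mathcal{F},\mathbb{P})$, one gets at once $\mathbb{E}[I_{p}(f)I_{q}(g)] = \langle I_{p}(f),I_{q}(g)\rangle_{L^{2}(\Omega)} = 0$. For the case $p=q$: here I would use polarization together with the linearity of $I_{p}$ on $\mathcal{H}^{\odot p}$. Since $f\pm g\in\mathcal{H}^{\odot p}$ and $I_{p}(f\pm g) = I_{p}(f)\pm I_{p}(g)$, the isometry property gives $\mathbb{E}[(I_{p}(f)\pm I_{p}(g))^{2}] = p!\,\|f\pm g\|_{\mathcal{H}^{\otimes p}}^{2}$. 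Subtracting these two identities, dividing by $4$, and applying the real polarization identity $\langle x,y\rangle = \frac{1}{4}(\|x+y\|^{2}-\|x-y\|^{2})$ on both sides yields
\[
\mathbb{E}[I_{p}(f)I_{q}(g)] = \frac{1}{4}\left(p!\,\|f+g\|_{\mathcal{H}^{\otimes p}}^{2} - p!\,\|f-g\|_{\mathcal{H}^{\otimes p}}^{2}\right) = p!\,\langle f,g\rangle_{\mathcal{H}^{\otimes p}},
\]
which is the claimed formula.

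The only genuinely nontrivial input is the orthogonality $\mathcal{W}_{p}\perp\mathcal{W}_{q}$ for $p\neq q$; the remainder is bookkeeping. If one preferred a self-contained argument instead of invoking the Wiener chaos decomposition, this orthogonality can be reduced to product tensors $h_{1}\otimes\cdots\otimes h_{p}$ and $h_{1}'\otimes\cdots\otimes h_{q}'$ with the $h_{i},h_{j}'$ drawn (without loss of generality) from an orthonormal system in $\mathcal{H}$, writing the corresponding multiple integrals as products of Hermite polynomials of independent standard Gaussians and using $\mathbb{E}[H_{m}(Z)H_{k}(Z)] = 0$ for $m\neq k$ and $\mathbb{E}[H_{m}(Z)] = 0$ for $m\geq 1$; I would, however, simply cite \cite{NP} for this standard fact. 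The main obstacle is thus purely expository: ensuring the $\sqrt{p!}$ normalisation in the definition of $I_{p}$ is tracked correctly, and that $f,g$ indeed lie in the symmetric tensor products so that both linearity of $I_{p}$ and the isometry identity are legitimately applied.
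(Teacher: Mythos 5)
Your proposal is correct. The paper does not actually prove this statement: it simply cites Proposition 2.7.5 of \cite{NP}, so there is no argument in the paper to compare against step by step. Your derivation is the standard one and is legitimate given what the preliminaries provide: the definition of $I_{p}$ as an isometry from $(\mathcal{H}^{\odot p},\sqrt{p!}\,\|\cdot\|_{\mathcal{H}^{\otimes p}})$ onto $\mathcal{W}_{p}$ gives only the quadratic identity $\mathbb{E}[I_{p}(h)^{2}]=p!\,\|h\|_{\mathcal{H}^{\otimes p}}^{2}$, and your polarization step correctly upgrades this to the bilinear form $\mathbb{E}[I_{p}(f)I_{p}(g)]=p!\,\langle f,g\rangle_{\mathcal{H}^{\otimes p}}$ (using that $\mathcal{H}^{\odot p}$ is a linear subspace so $f\pm g$ remain admissible arguments), so there is no circularity. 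The $p\neq q$ case does rest on the orthogonality $\mathcal{W}_{p}\perp\mathcal{W}_{q}$, which is exactly the external input the paper's citation covers; your Hermite-polynomial sketch of that fact is the standard route and is fine to delegate to \cite{NP}, as you do. In short, you have supplied a complete standard proof where the paper supplies only a reference.
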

\begin{proof}
See Proposition 2.7.5 in \cite{NP}.
\end{proof}
\noindent Moreover, we have the following product formula for multiple integrals.
\begin{thm}\label{Theorem 2.7.10}
Let $p,q\geq 1$. If $f\in\mathcal{H}^{\odot p}$ and $g\in\mathcal{H}^{\odot q}$, then
\begin{equation*}
I_{p}(f)I_{q}(g)=\sum_{r=0}^{p\wedge q}r!\binom{p}{r}\binom{q}{r}I_{p+q-2r}(f\tilde{\otimes}_{r}g).
\end{equation*}
\end{thm}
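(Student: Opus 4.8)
The plan is to reduce the product formula to the case of rank-one symmetric tensors $f=h^{\otimes p}$, $g=e^{\otimes q}$ with $h,e\in\mathcal H$, and then to prove that case by a generating-function argument built on exponential vectors. For the reduction I would observe that each side of the asserted identity, viewed as a function of $(f,g)$, is a bounded bilinear map $\mathcal H^{\odot p}\times\mathcal H^{\odot q}\to L^2(\Omega,\mathcal F,\mathbb P)$: on the left, hypercontractivity on a fixed Wiener chaos gives $\|I_p(f)\|_{L^4}\le c_p\sqrt{p!}\,\|f\|_{\mathcal H^{\otimes p}}$ (and similarly for $I_q(g)$), whence $\|I_p(f)I_q(g)\|_{L^2}\lesssim\|f\|_{\mathcal H^{\otimes p}}\|g\|_{\mathcal H^{\otimes q}}$; on the right, the contraction $(f,g)\mapsto f\otimes_r g$ and the symmetrisation are norm-nonincreasing and $I_{p+q-2r}$ is an isometry by Proposition~\ref{isometry property of integrals}. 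Since the linear span of $\{h^{\otimes p}:h\in\mathcal H\}$ is dense in $\mathcal H^{\odot p}$ by polarisation, and likewise for $\mathcal H^{\odot q}$, it suffices to verify the identity on pairs of rank-one tensors.

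For that case, recall the exponential vector $\mathcal E(h):=\exp\!\big(I_1(h)-\tfrac12\|h\|_{\mathcal H}^2\big)$, which admits the $L^2(\Omega)$-convergent chaos expansion $\mathcal E(h)=\sum_{n\ge0}\frac1{n!}I_n(h^{\otimes n})$; this follows from the Hermite generating function $e^{tx-t^2/2}=\sum_n\frac{t^n}{n!}H_n(x)$ together with $I_n(h^{\otimes n})=\|h\|^n H_n\!\big(I_1(h)/\|h\|\big)$. A direct computation with Gaussian exponentials gives, for real $s,t$, the identity $\mathcal E(sh)\,\mathcal E(te)=e^{st\langle h,e\rangle_{\mathcal H}}\,\mathcal E(sh+te)$. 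I would then expand both sides as power series in $(s,t)$: the left side equals $\sum_{p,q}\frac{s^pt^q}{p!\,q!}I_p(h^{\otimes p})I_q(e^{\otimes q})$, while on the right I use $e^{st\langle h,e\rangle}=\sum_r\frac{(st)^r}{r!}\langle h,e\rangle^r$ and the symmetric binomial expansion $(sh+te)^{\otimes m}=\sum_{j=0}^m\binom mj s^jt^{m-j}\,\widetilde{h^{\otimes j}\otimes e^{\otimes(m-j)}}$. Matching the coefficient of $s^pt^q$ forces $m=p+q-2r$ with $0\le r\le p\wedge q$; after multiplying through by $p!\,q!$ the combinatorial constant $\frac{p!\,q!}{r!\,(p-r)!\,(q-r)!}$ collapses to $r!\binom pr\binom qr$. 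Since $h^{\otimes p}\tilde\otimes_r e^{\otimes q}=\langle h,e\rangle_{\mathcal H}^r\,\widetilde{h^{\otimes(p-r)}\otimes e^{\otimes(q-r)}}$ directly from the definition of the $r$-th contraction, this is exactly the claimed formula for $f=h^{\otimes p}$, $g=e^{\otimes q}$, and the reduction above then yields the general statement.

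The step I expect to be the main obstacle is not the algebra but the analytic bookkeeping in the rank-one case: justifying the $L^2$-convergence of the exponential-vector expansion and, above all, rigorously rearranging the double power series in $s$ and $t$ so that term-by-term coefficient matching is legitimate. An alternative that avoids generating functions is to realise $\mathcal H$ as $L^2$ of a non-atomic measure, represent $I_p(f)$ as $p!$ times an iterated It\^o integral over the ordered simplex, and prove the formula first for $f,g$ built from indicators of pairwise disjoint sets by a direct computation; but there one must carefully count the ways of pairing $r$ coordinates of $f$ with $r$ coordinates of $g$ to reproduce the factor $r!\binom pr\binom qr$, so the same counting identity is the crux either way. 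A complete treatment of both routes can be found in \cite{NP}.
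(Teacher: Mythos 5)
Your proposal is correct, but note that the paper does not actually prove this statement: it defers entirely to Theorem 2.7.10 of \cite{NP}, where the formula is established by a different route (first the case $q=1$, i.e.\ $I_p(f)I_1(h)=I_{p+1}(f\tilde{\otimes}h)+p\,I_{p-1}(f\otimes_1 h)$, obtained from the Hermite three-term recurrence, and then induction on $q$). Your exponential-vector argument is a legitimate classical alternative, and the details check out: hypercontractivity on a fixed chaos does give $\|I_p(f)I_q(g)\|_{L^2}\le c_{p,q}\|f\|_{\mathcal{H}^{\otimes p}}\|g\|_{\mathcal{H}^{\otimes q}}$, contractions and symmetrisation are norm-nonincreasing by Cauchy--Schwarz and the triangle inequality, rank-one symmetric tensors are total in $\mathcal{H}^{\odot p}$ by polarisation, and the coefficient of $s^pt^q$ in $e^{st\langle h,e\rangle_{\mathcal H}}\mathcal{E}(sh+te)$ is exactly $\sum_r\frac{\langle h,e\rangle^r}{r!(p-r)!(q-r)!}I_{p+q-2r}\big(\widetilde{h^{\otimes(p-r)}\otimes e^{\otimes(q-r)}}\big)$, which after multiplying by $p!\,q!$ reproduces $r!\binom{p}{r}\binom{q}{r}\,h^{\otimes p}\tilde{\otimes}_r e^{\otimes q}$. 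The one step you flag as open, the rearrangement of the double series, is genuinely needed but standard: the chaos expansion of $\mathcal{E}(sh)$ converges in every $L^m$ with $m<\infty$ (again by the equivalence of norms on a fixed chaos), so the product series converges absolutely in $L^2$, and coefficients can be identified by projecting both sides onto each fixed Wiener chaos, where they become scalar power series in $(s,t)$ agreeing for all real $s,t$. In short, the generating-function route trades the inductive Vandermonde-type bookkeeping of the reference for a multinomial expansion plus a small amount of analytic care; either is acceptable, and your write-up would be complete once that convergence argument is spelled out.
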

\begin{proof}
See Theorem 2.7.10 in \cite{NP}.
\end{proof}
\noindent Similarly to \cite{Andrea1} we apply the product formula for multiple integrals to conclude that for $i,j=1,...,p$
\begin{equation*}
\dfrac{\Delta^{n}_{l}G^{(i)}}{\tau_{n}^{(i)}}\dfrac{\Delta^{n}_{l}G^{(j)}}{\tau_{n}^{(j)}}=I_{1}\left(\dfrac{\Delta^{n}_{l}G^{(i)}}{\tau_{n}^{(i)}} \right)I_{1}\left(\dfrac{\Delta^{n}_{l}G^{(j)}}{\tau_{n}^{(j)}} \right)=\sum_{r=0}^{1}r!\binom{1}{r}\binom{1}{r}I_{2-2r}\left(\dfrac{\Delta^{n}_{l}G^{(i)}}{\tau_{n}^{(i)}}\tilde{\otimes}_{r}\dfrac{\Delta^{n}_{l}G^{(j)}}{\tau_{n}^{(j)}}\right)
\end{equation*}
\begin{equation*}
=I_{2}\left(\dfrac{\Delta^{n}_{l}G^{(i)}}{\tau_{n}^{(i)}}\tilde{\otimes}\dfrac{\Delta^{n}_{l}G^{(j)}}{\tau_{n}^{(j)}}\right)+\mathbb{E}\left[\dfrac{\Delta^{n}_{l}G^{(i)}}{\tau_{n}^{(i)}}\dfrac{\Delta^{n}_{l}G^{(j)}}{\tau_{n}^{(j)}} \right]
\end{equation*}
\begin{equation*}
\Rightarrow I_{2}\left(\dfrac{\Delta^{n}_{l}G^{(i)}}{\tau_{n}^{(i)}}\tilde{\otimes}\dfrac{\Delta^{n}_{l}G^{(j)}}{\tau_{n}^{(j)}}\right)=\dfrac{\Delta^{n}_{l}G^{(i)}}{\tau_{n}^{(i)}}\dfrac{\Delta^{n}_{l}G^{(j)}}{\tau_{n}^{(j)}}-\mathbb{E}\left[\dfrac{\Delta^{n}_{l}G^{(i)}}{\tau_{n}^{(i)}}\dfrac{\Delta^{n}_{l}G^{(j)}}{\tau_{n}^{(j)}} \right].
\end{equation*}

Now, we introduce the space $\mathcal{D}([0,T],\mathbb{R}^{n})$. This space is the set of all c\`{a}dl\`{a}g functions from $[0,T]$ to $\mathbb{R}^{n}$ and it is called the Skorokhod space. The norm in this space is defined as $\|f\|_{\mathcal{D}([0,T],\mathbb{R}^{n})}:=\sup\limits_{t\in[0,T]}\|f\|_{\mathbb{R}^{n}}$ where $f\in\mathcal{D}([0,T],\mathbb{R}^{n})$ and $\|\cdot\|_{\mathbb{R}^{n}}$ is any norm on $\mathbb{R}^{n}$ (it is a finite dimensional vector space, thus all the norms are equivalent). This metric works fine for $C([0,T],\mathbb{R}^{n})$ (the space of continuous functions from $[0,T]$ to $\mathbb{R}^{n}$), but it is stronger than the usual Skorokhod metric $J_{1}$ (or $M_{1}$). However, in this paper the functions to which our random elements (\textit{i.e}.~random variables and stochastic processes) convergence are continuous, and in this case these metrics are all equivalent.

We end this chapter with some results on stable convergence. We use the notations $\stackrel{u.c.p.}{\rightarrow}$, $\stackrel{P}{\rightarrow}$, $\stackrel{st}{\rightarrow}$, $\stackrel{d}{\rightarrow}$ for convergence in probability locally uniformly in time, convergence in probability, stable convergence, and convergence in distribution, respectively. In the case of the Skorokhod space with uniform metric, suppose $X_{n},X$ are $\mathcal{D}([0,T],\mathbb{R}^{d})$-valued stochastic processes defined on the same filtered probability space, then we have that $X_{n}\stackrel{P}{\rightarrow}X$ if and only if $X_{n}\stackrel{u.c.p.}{\rightarrow}X$, since they are both equal to $\lim\limits_{n\rightarrow\infty}\mathbb{P}(\sup\limits_{t\in[0,T]}\| X_{n,t}-X_{t}\|_{\mathbb{R}^{d}})$.
\begin{thm}[Continuous mapping theorem]\label{Cotinuous mapping}
Let $(S,m)$ a metric space and let $(S_{n},m)\subset (S,m)$ be arbitrary subsets and $g_{n}:(S_{n},m)\mapsto (E,\mu)$ be arbitrary maps $(n\geq 0)$ such that, for every sequence $x_{n}\in (S_{n},m)$, if $x_{n'}\rightarrow x$ along a subsequence and $x\in(S_{0},m)$ then $g_{n'}(x_{n'})\rightarrow g_{0}(x)$. Then, for arbitrary maps $X_{n}:\Omega_{n}\mapsto (S_{n},m)$ and every random element $X$ with values in $(S_{0},m)$ such that $g_{0}(X)$ is a random element in $(E,\mu)$: if $X_{n}\stackrel{d}{\rightarrow}X$, then $g_{n}(X_{n})\stackrel{d}{\rightarrow}g_{0}(X)$; if $X_{n}\stackrel{P}{\rightarrow}X$, then $g_{n}(X_{n})\stackrel{P}{\rightarrow}g_{0}(X)$; and if $X_{n}\stackrel{a.s.}{\rightarrow}X$, then $g_{n}(X_{n})\stackrel{a.s.}{\rightarrow}g_{0}(X)$.
\end{thm}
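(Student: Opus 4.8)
The plan is to establish the three implications in the order: almost sure convergence first, then convergence in probability, then convergence in distribution, reducing each case to the previous one. The only structural feature of the hypothesis that matters is that the compatibility condition on $(g_{n})$ is phrased along subsequences, and this is exactly what makes the reductions go through. The almost sure case is immediate: if $X_{n}\stackrel{a.s.}{\rightarrow}X$, pick a $\mathbb{P}$-null set $N$ outside of which $X_{n}(\omega)\rightarrow X(\omega)$; since $X$ takes values in $S_{0}$, the sequence $x_{n}:=X_{n}(\omega)\in S_{n}$ converges to $x:=X(\omega)\in S_{0}$, so applying the hypothesis with the trivial subsequence $n'=n$ gives $g_{n}(X_{n}(\omega))\rightarrow g_{0}(X(\omega))$ for every $\omega\notin N$, i.e. $g_{n}(X_{n})\stackrel{a.s.}{\rightarrow}g_{0}(X)$ (here one uses the standing assumption that $g_{n}(X_{n})$ and $g_{0}(X)$ are genuine random elements of $(E,\mu)$; the measurability point is discussed below).

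For the convergence-in-probability case I would use the subsequence principle. Fix an arbitrary subsequence $(n_{k})$; since $X_{n_{k}}\stackrel{P}{\rightarrow}X$, there is a further subsequence $(n_{k_{j}})$ along which $X_{n_{k_{j}}}\rightarrow X$ almost surely. On the full-measure event where this holds I manufacture an auxiliary full sequence $(y_{m})_{m\geq1}$ by setting $y_{n_{k_{j}}}:=X_{n_{k_{j}}}(\omega)$ and choosing $y_{m}$ to be any point of $S_{m}$ at the remaining indices; then $y_{n'}\rightarrow X(\omega)\in S_{0}$ along $n'=n_{k_{j}}$, and the hypothesis yields $g_{n_{k_{j}}}(X_{n_{k_{j}}}(\omega))\rightarrow g_{0}(X(\omega))$. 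Hence $g_{n_{k_{j}}}(X_{n_{k_{j}}})\stackrel{a.s.}{\rightarrow}g_{0}(X)$, and a fortiori in probability; since every subsequence of $(g_{n}(X_{n}))_{n}$ therefore admits a further subsequence converging in probability to $g_{0}(X)$, the whole sequence converges in probability to $g_{0}(X)$.

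For the convergence-in-distribution case, where $X_{n}$ and $X$ may be defined on different spaces, I would invoke the almost sure representation (Skorokhod--Dudley) theorem: since $X_{n}\stackrel{d}{\rightarrow}X$ and the law of $X$ is a Borel measure on a separable metric space, one can build a single probability space carrying $\tilde{X}_{n}\stackrel{d}{=}X_{n}$, $\tilde{X}\stackrel{d}{=}X$ with $\tilde{X}_{n}\rightarrow\tilde{X}$ almost surely, and one also arranges $\tilde{X}_{n}\in S_{n}$ almost surely. Applying the already-proved almost sure case to $(\tilde{X}_{n},\tilde{X})$ gives $g_{n}(\tilde{X}_{n})\stackrel{a.s.}{\rightarrow}g_{0}(\tilde{X})$, hence $g_{n}(\tilde{X}_{n})\stackrel{d}{\rightarrow}g_{0}(\tilde{X})$; transporting laws back through $g_{n}(\tilde{X}_{n})\stackrel{d}{=}g_{n}(X_{n})$ and $g_{0}(\tilde{X})\stackrel{d}{=}g_{0}(X)$ gives $g_{n}(X_{n})\stackrel{d}{\rightarrow}g_{0}(X)$.

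The main difficulty is not the logical skeleton above but the measurability bookkeeping hidden in the words ``arbitrary subsets $(S_{n})$'' and ``arbitrary maps $(g_{n})$'': one must ensure that $g_{n}(X_{n})$ is genuinely a random element, and that the representation can be chosen so that $\tilde{X}_{n}$ still takes its values in $S_{n}$, so that $g_{n}(\tilde{X}_{n})$ is even defined. In full generality this is handled in the outer-probability framework, and the most economical route is simply to cite the extended continuous mapping theorem of van der Vaart and Wellner together with their almost sure representation theorem. In every application occurring in this paper, however, $S_{n}=S_{0}$ is the Skorokhod space $\mathcal{D}([0,T],\mathbb{R}^{d})$, the maps $g_{n}$ are continuous and the limiting processes are continuous, so the elementary argument sketched above applies without change.
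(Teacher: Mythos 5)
Your proposal is correct in outline, but it is worth noting that the paper does not prove this statement at all: it simply cites Theorem 18.11 of van der Vaart's \emph{Asymptotic Statistics}, so you have supplied the argument that the paper outsources. Your three reductions (almost sure case by applying the hypothesis along the trivial subsequence, probability case by the subsequence principle, distributional case by almost sure representation) are the standard ones and are sound; the only genuinely delicate point is the one you yourself flag, namely that with arbitrary subsets $S_{n}$ and arbitrary maps $g_{n}$ one must work with outer probabilities and use a representation built from perfect maps so that $\tilde{X}_{n}$ still lands in $S_{n}$ and $g_{n}(\tilde{X}_{n})$ is defined. One small comparison with the cited source: van der Vaart's own proof of the distributional part does not use the Skorokhod--Dudley representation but argues directly via the portmanteau lemma, showing for each closed $F\subset E$ that $\limsup_{n}\mathbb{P}^{*}(g_{n}(X_{n})\in F)\leq\mathbb{P}(g_{0}(X)\in F)$ by means of the sets $B_{k}=\bigcup_{n\geq k}\{x\in S_{n}:g_{n}(x)\in F\}$ and the inclusion $\bigcap_{k}\overline{B_{k}}\cap S_{0}\subset g_{0}^{-1}(F)$; that route buys you the result without invoking the representation theorem and its separability hypothesis on the limit law, whereas your route is arguably more transparent. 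You are also right that in every application in this paper $S_{n}=S_{0}$ is a Skorokhod space with the uniform metric, the maps are continuous and the limits are continuous processes, so none of these technicalities bite.
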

\begin{proof}
See Theorem 18.11 of \cite{V}.
\end{proof}
Notice that $(S,m)$ might be a function space like the Skorokhod space endowed with the uniform metric. For stable convergence we have the following theorem, see Theorem 1 in \cite{Aldo}.
\begin{thm} Let $X_{n}$ be random elements defined on the same probability space $(\Omega,\mathcal{F},\mathbb{P})$. Suppose that $X_{n}\stackrel{st}{\rightarrow}X$, that $\sigma$ is any fixed
$\mathcal{F}$-measurable random variable and that $g(x, y)$ is a continuous function of two variables. Then, $g(X_{n},\sigma)\stackrel{st}{\rightarrow}g(X,\sigma)$.
\end{thm}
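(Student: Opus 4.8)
The plan is to reduce the statement to the continuous mapping theorem (Theorem \ref{Cotinuous mapping}) by using the standard characterisation of stable convergence in terms of joint convergence in distribution. Recall that $X_{n}\stackrel{st}{\rightarrow}X$ is equivalent to the assertion that $(X_{n},Z)\stackrel{d}{\rightarrow}(X,Z)$ for \emph{every} random element $Z$ that is $\mathcal{F}$-measurable (with values in a Polish space), where on the right-hand side $X$ is understood on the usual extension of the probability space supporting faithful copies of all original $\mathcal{F}$-measurable variables. Granting this characterisation, the argument is essentially immediate.

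First I would fix an arbitrary $\mathcal{F}$-measurable random element $V$ and form the pair $W:=(\sigma,V)$, which is again $\mathcal{F}$-measurable with values in a product of Polish spaces. The characterisation applied to $W$ gives $(X_{n},W)\stackrel{d}{\rightarrow}(X,W)$. Next I would introduce the map $\Phi(x,(y,v)):=(g(x,y),v)$; since $g$ is jointly continuous, $\Phi$ is continuous on the relevant product space, so Theorem \ref{Cotinuous mapping} applied to $\Phi$ and to the convergence $(X_{n},W)\stackrel{d}{\rightarrow}(X,W)$ yields
\[
(g(X_{n},\sigma),V)=\Phi(X_{n},W)\stackrel{d}{\rightarrow}\Phi(X,W)=(g(X,\sigma),V).
\]
Since $V$ was an arbitrary $\mathcal{F}$-measurable random element, invoking the characterisation in the reverse direction gives exactly $g(X_{n},\sigma)\stackrel{st}{\rightarrow}g(X,\sigma)$, which is the claim.

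A more self-contained route, which avoids quoting the characterisation lemma, works directly from the definition of stable convergence: for bounded continuous $h$ and bounded $\mathcal{F}$-measurable $Z$ one must show $\mathbb{E}[h(g(X_{n},\sigma))Z]\to\mathbb{E}[h(g(X,\sigma))Z]$. Here one approximates $\sigma$ in probability by simple $\mathcal{F}$-measurable variables $\sigma_{m}=\sum_{k}c_{k}^{(m)}\mathbf{1}_{A_{k}^{(m)}}$; on each atom $A_{k}^{(m)}$ the map $x\mapsto h(g(x,c_{k}^{(m)}))$ is a \emph{fixed} bounded continuous function, so stable convergence of $X_{n}$ controls each term $\mathbb{E}[h(g(X_{n},c_{k}^{(m)}))\mathbf{1}_{A_{k}^{(m)}}Z]$, and one closes the argument by letting $m\to\infty$, using uniform continuity of $h\circ g$ on compacts together with tightness of $(X_{n})_{n}$ to make the truncation errors uniformly small.

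I expect the only genuine subtlety to be bookkeeping about the state spaces rather than any probabilistic difficulty: in our applications the $X_{n}$ may take values in a function space such as the Skorokhod space equipped with the uniform metric, so one has to check that the relevant products are still spaces to which Theorem \ref{Cotinuous mapping} applies and that $\Phi$ is continuous there. All of this is routine, and since the result is classical I would in the final write-up simply refer to Theorem 1 of \cite{Aldo}.
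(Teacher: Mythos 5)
Your proposal is correct and follows essentially the same route as the paper, whose proof is simply the one-line remark that the result follows from the continuous mapping theorem together with the definition (equivalently, the joint-convergence characterisation) of stable convergence. Your write-up merely fills in the details of that argument, and the additional approximation-by-simple-functions route is a valid but unnecessary alternative.
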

\begin{proof}
	It follows from Theorem \ref{Cotinuous mapping} and the definition of stable convergence.
\end{proof}
\begin{pro}
Let $X_{n}, Y_{n}$ and $Y$ be random elements defined on the same probability space and assume that $Y_{n}\stackrel{P}{\rightarrow}Y$ and $X_{n}\stackrel{st}{\rightarrow}X$. Then, $(X_{n},Y_{n})\stackrel{st}{\rightarrow}(X,Y)$.
\end{pro}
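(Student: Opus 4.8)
The plan is to reach the statement in two moves: first replace $Y_{n}$ by its limit $Y$, and then absorb the genuine randomness of $Y_{n}$ through a Slutsky-type argument adapted to stable convergence.

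First I would observe that $Y$ is $\mathcal{F}$-measurable, so that, together with $X_{n}\stackrel{st}{\rightarrow}X$, the theorem just stated applies with $\sigma:=Y$ and with $g$ the pairing map $g(x,y):=(x,y)$, which is jointly continuous. This yields at once
\[
(X_{n},Y)\stackrel{st}{\rightarrow}(X,Y)
\]
as random elements of the product space.

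It then remains to replace the second coordinate $Y$ by $Y_{n}$. Here I would invoke the characterisation of stable convergence by test functions: on a separable metric space, $Z_{n}\stackrel{st}{\rightarrow}Z$ holds if and only if $\mathbb{E}[\varphi(Z_{n})W]\to\mathbb{E}[\varphi(Z)W]$ for every bounded Lipschitz $\varphi$ and every bounded $\mathcal{F}$-measurable $W$ (bounded Lipschitz functions being convergence-determining). Fixing such a $\varphi$, with Lipschitz constant $L$, and such a $W$, and writing $d$ for the metric on the state space of the $Y$'s, one has
\[
\big|\mathbb{E}[\varphi(X_{n},Y_{n})W]-\mathbb{E}[\varphi(X_{n},Y)W]\big|\le \|W\|_{\infty}\,\mathbb{E}\big[\big(L\,d(Y_{n},Y)\big)\wedge 2\|\varphi\|_{\infty}\big].
\]
The integrand on the right is bounded by the constant $2\|\varphi\|_{\infty}$ and converges to $0$ in probability since $Y_{n}\stackrel{P}{\rightarrow}Y$, so the right-hand side tends to $0$ by dominated convergence. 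Combining this with $\mathbb{E}[\varphi(X_{n},Y)W]\to\mathbb{E}[\varphi(X,Y)W]$ from the first move gives $\mathbb{E}[\varphi(X_{n},Y_{n})W]\to\mathbb{E}[\varphi(X,Y)W]$, which is precisely $(X_{n},Y_{n})\stackrel{st}{\rightarrow}(X,Y)$.

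The substance is all in the second move; the first is immediate from the previous theorem. The one point that needs care is the legitimacy of testing stable convergence against bounded Lipschitz functions only on the relevant product state space — this rests on that space (a product of copies of $\mathbb{R}^{d}$ and of Skorokhod spaces $\mathcal{D}([0,T],\mathbb{R}^{d})$ endowed with the uniform metric) being separable metric, so that bounded Lipschitz functions are convergence-determining — together with the fact, already recorded in this chapter, that $\stackrel{P}{\rightarrow}$ is the appropriate notion on such spaces (agreeing with u.c.p.\ convergence when the limit is continuous), so that $d(Y_{n},Y)\stackrel{P}{\rightarrow}0$ is exactly the standing hypothesis. Beyond this, the argument is the classical Slutsky lemma, upgraded from convergence in distribution to stable convergence simply by carrying the bounded auxiliary factor $W$ through every expectation.
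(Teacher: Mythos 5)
Your argument is correct, but note that the paper does not actually prove this proposition: it simply cites Section 2 of Jacod (1997), where the result appears as a standard lemma on stable convergence. Your proposal supplies a genuine self-contained proof, and it is the classical one: first pass from $X_{n}\stackrel{st}{\rightarrow}X$ to $(X_{n},Y)\stackrel{st}{\rightarrow}(X,Y)$ via the preceding continuous-mapping statement with $\sigma=Y$ and $g$ the pairing map, then swap $Y$ for $Y_{n}$ by testing against bounded Lipschitz $\varphi$ and bounded $\mathcal{F}$-measurable $W$, using $\lvert\varphi(X_{n},Y_{n})-\varphi(X_{n},Y)\rvert\le (L\,d(Y_{n},Y))\wedge 2\lVert\varphi\rVert_{\infty}$ and bounded convergence (convergence in probability plus a uniform bound gives $L^{1}$ convergence; ``dominated convergence'' is the right idea even if one formally passes to a.s.\ convergent subsequences). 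The reduction to bounded Lipschitz test functions is legitimate: for fixed bounded $W\ge 0$ the set functions $A\mapsto\mathbb{E}[\mathbf{1}_{A}(Z_{n})W]$ are finite measures, and convergence against bounded Lipschitz functions characterises their weak convergence on a metric space, after which one splits a general $W$ into positive and negative parts. What your route buys is transparency and independence from the reference; what the paper's citation buys is brevity. There is no gap in your argument.
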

\begin{proof}
	See Section 2 of \cite{Jac}.
\end{proof}
\section{Joint CLT for stationary multivariate Gaussian processes}\label{JOINT Gaussian}
As mentioned in the introduction one of the differences from previous works on limit theorems for BSS processes is that we use a different scaling factor $\tau$. In this chapter we are going to present two cases. For the first case we use the same $\tau$ used in the literature, while in the second we use a new formulation. The differences between the two approaches will be pointed out subsequently.
\begin{rem}
	Throughout this chapter $\{\textbf{G}_{t} \}_{t\in[0,T]}$ is a general stationary multivariate Gaussian process. Thus, it is not necessarily the Gaussian core.
\end{rem}
\subsection{Case I}
For $i,j=1,...,p$ and $k\in\mathbb{N}$, let us define the scaling factor by
\begin{equation}\label{tau}
\tau_{n}^{(j)}:=\sqrt{\mathbb{E}\left[\left(\Delta^{n}_{1}G^{(j)}\right)^{2}\right]},
\end{equation}
and the multivariate process $\{\textbf{Z}_{t}^{n}\}_{t\in[0,T]}=\{(Z_{(1,1),t}^{n},...,Z_{(p,p),t}^{n})^{\top}\}_{t\in[0,T]}$ as
\begin{equation*}
Z_{(i,j),t}^{n}:=\frac{1}{\sqrt{n}}\sum_{i=1}^{\lfloor nt\rfloor}I_{2}\left(\dfrac{\Delta^{n}_{l}G^{(i)}}{\tau_{n}^{(i)}}\tilde{\otimes}\dfrac{\Delta^{n}_{l}G^{(j)}}{\tau_{n}^{(j)}}\right),\quad
\text{and }\quad r_{i,j}^{(n)}(k):=\mathbb{E}\left[\dfrac{\Delta^{n}_{1}G^{(i)}}{\tau_{n}^{(i)}}\dfrac{\Delta^{n}_{1+k}G^{(j)}}{\tau_{n}^{(j)}} \right].
\end{equation*}
Thanks to Theorem 2.7.7 of \cite{NP} (reported in this work as Theorem \ref{Theorem 2.7.10}), $Z_{(i,j),t}^{n}$ belongs to $\mathcal{W}_{2}$, namely the second Wiener chaos. In addition, we will consider the following assumption on the correlation.
\begin{ass}\label{A2} Let $\lim\limits_{n\rightarrow\infty}\sum_{k=1}^{\infty}\left(r_{i,j}^{(n)}(k)\right)^{2}<\infty$, for $i,j=1,...,p$.
\end{ass}
\begin{rem}Let $\lim\limits_{n\rightarrow\infty}\sum_{k=1}^{\infty}\left(r_{i,j}^{(n)}(k)\right)^{2}<\infty$, for $i,j=1,...,p$. For $x,y,z,w=1,...,p$, the following limit holds
\begin{equation*}
\lim\limits_{n\rightarrow\infty}\frac{1}{n}\sum_{k=1}^{n}k\left(r_{x,z}^{(n)}(k)r_{y,w}^{(n)}(k)+r_{y,z}^{(n)}(k)r_{x,w}^{(n)}(k)\right)+\frac{1}{n}\sum_{k=n+1}^{2n-1}(2n-k)\left(r_{x,z}^{(n)}(k)r_{y,w}^{(n)}(k)+r_{y,z}^{(n)}(k)r_{x,w}^{(n)}(k)\right)=0.
\end{equation*}
This is because given two sequences $\{a_{k}\}_{k\in\mathbb{N}}$ and $\{b_{k}\}_{k\in\mathbb{N}}$ then if $\sum_{k=1}^{\infty}\left(a_{k}\right)^{2}<\infty$ and $\sum_{k=1}^{\infty}\left(b_{k}\right)^{2}<\infty$ then $\lim\limits_{l\rightarrow\infty}\frac{1}{l}\sum_{k=1}^{l}k\left(a_{k}\right)^{2}=0$ and $\lim\limits_{l\rightarrow\infty}\frac{1}{l}\sum_{k=1}^{l}k\left(b_{k}\right)^{2}=0$ which imply that $\lim\limits_{l\rightarrow\infty}\frac{1}{l}\sum_{k=1}^{l}k a_{k}b_{k}=0$ (see the use of condition (3.9) in \cite{BCD}).
\end{rem}
We now present two propositions regarding the process $\{\textbf{Z}_{t}^{n}\}_{t\in[0,T]}$ and they will lead us to the main theorem of this section. The first one is on the convergence of the finite dimensional distributions, while the second one is on the tightness of the law of the process.
\begin{pro}\label{C-M}
Let $d\in\mathbb{N}$ and let $(a_{l},b_{l}]$ pairwise disjoint intervals in $[0,T]$ where $l=1,..,d$. Consider
\begin{equation*}
\textbf{Z}_{b_{l}}^{n}-\textbf{Z}_{a_{l}}^{n}=(Z_{(1,1),b_{l}}^{n}-Z_{(1,1),a_{l}}^{n},...,Z_{(1,p),b_{l}}^{n}-Z_{(1,p),a_{l}}^{n},Z_{(2,1),b_{l}}^{n}-Z_{(2,1),a_{l}}^{n},...,Z_{(p,p),b_{l}}^{n}-Z_{(p,p),a_{l}}^{n})^{\top}.
\end{equation*}
Then, under the Assumption \ref{A2}, $(\textbf{Z}_{b_{l}}^{n}-\textbf{Z}_{a_{l}}^{n})_{1\leq l\leq d}\stackrel{d}{\rightarrow}(\textbf{D}^{\frac{1}{2}}(B_{b_{l}}-B_{a_{l}}))_{1\leq l\leq d}$ as $n\rightarrow\infty$, where $\textbf{D}\in\mathcal{M}^{p^{2}\times p^{2}}(\mathbb{R})$ and $B_{t}$ is a $p^{2}$-dimensional Brownian motion. In particular, associating for each combination $(i,j)$ a combination $((x,y),(z,w))$, where $x,y,z,w=1,...,p$, using the formula $(i,j)\leftrightarrow((\lfloor \frac{i-1}{p} \rfloor+1,i-p\lfloor \frac{i-1}{p} \rfloor),(\lfloor \frac{j-1}{p} \rfloor+1,j-p\lfloor \frac{j-1}{p} \rfloor))$, we have $(\textbf{D})_{ij}=(\textbf{D})_{(x,y),(z,w)}$
\begin{equation*}
=\lim\limits_{n\rightarrow\infty}\frac{2}{n}\sum_{k=1}^{n-1}(n-k)\left(r_{x,z}^{(n)}(k)r_{y,w}^{(n)}(k)+r_{y,z}^{(n)}(k)r_{x,w}^{(n)}(k)\right)+\left(r_{x,z}^{(n)}(0)r_{y,w}^{(n)}(0)+r_{y,z}^{(n)}(0)r_{x,w}^{(n)}(0)\right).
\end{equation*}
\end{pro}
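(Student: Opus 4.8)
The plan is to prove convergence of the finite-dimensional distributions by reducing the problem to a multivariate CLT on a fixed Wiener chaos, so that the so-called \emph{fourth moment theorem} of Nourdin--Peccati applies. Concretely, each coordinate $Z_{(i,j),b_l}^n - Z_{(i,j),a_l}^n$ is, by construction, a second-chaos random variable of the form $n^{-1/2}\sum_{m=\lfloor na_l\rfloor+1}^{\lfloor nb_l\rfloor} I_2(f^{(i,j)}_{m,n})$ with $f^{(i,j)}_{m,n} = \frac{\Delta^n_m G^{(i)}}{\tau_n^{(i)}} \tilde\otimes \frac{\Delta^n_m G^{(j)}}{\tau_n^{(j)}}$; stacking all pairs $(i,j)$ and all intervals $l$ gives a vector $\textbf{F}_n = I_2(\textbf{h}_n)$ living in $\mathcal{W}_2^{\,dp^2}$. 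By the multivariate version of the fourth-moment theorem (Peccati--Tudor; see Theorem 6.2.3 in \cite{NP}), to get $\textbf{F}_n \stackrel{d}{\to} N(0,\Sigma)$ it suffices to check two things: (i) the covariance matrix $\mathbb{E}[\textbf{F}_n \textbf{F}_n^\top]$ converges to the claimed limit $\Sigma$, which here will be block-diagonal across the disjoint intervals with block $\textbf{D}\cdot\mathrm{Leb}(a_l,b_l]$ because increments over disjoint time intervals decorrelate asymptotically; and (ii) each contraction $\textbf{h}_n^{(a)} \otimes_1 \textbf{h}_n^{(a)} \to 0$ in the appropriate tensor norm for each coordinate $a$, equivalently $\|f^{(i,j)}_{\cdot,n}$-type contractions$\|\to 0$.

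\textbf{Step 1 (covariance computation).} Using Proposition~\ref{isometry property of integrals}, $\mathbb{E}[I_2(f\tilde\otimes g)I_2(f'\tilde\otimes g')]$ expands, via the symmetrisation of rank-one tensors, into a sum of products of inner products $\langle\cdot,\cdot\rangle_{\mathcal{H}}$, which are exactly the normalized correlations $r^{(n)}_{\cdot,\cdot}(\cdot)$. Summing over the double index $\lfloor na_l\rfloor < m,m' \le \lfloor nb_l\rfloor$ and changing variables $k=m'-m$ produces the triangular-array sum $\frac{2}{n}\sum_{k=1}^{n-1}(n-k)(\cdots) + (\cdots)$ that appears in the statement; Assumption~\ref{A2} guarantees this limit exists and is finite (the tail $\frac{1}{n}\sum_{k} k\, r r$ terms vanish by the Remark following Assumption~\ref{A2}). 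For two \emph{different} intervals $l\neq l'$ the lag $k$ is bounded below by a quantity of order $n$, so the cross term is dominated by $\sum_{k\ge cn}(r^{(n)})^2 \to 0$; hence the limiting covariance is block diagonal, which is why the limit can be written $\textbf{D}^{1/2}(B_{b_l}-B_{a_l})$ with a single $p^2\times p^2$ matrix $\textbf{D}$ and a $p^2$-dimensional Brownian motion $B$.

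\textbf{Step 2 (contraction estimate).} This is the step I expect to be the main obstacle. One must show that the relevant $\otimes_1$-contractions of the kernels tend to zero; after expanding, these reduce to expressions like $\sum_{k,k'} r^{(n)}_{\cdot,\cdot}(k) r^{(n)}_{\cdot,\cdot}(k') r^{(n)}_{\cdot,\cdot}(k-k')$ divided by $n$, and one bounds them using Cauchy--Schwarz together with the summability provided by Assumption~\ref{A2} exactly as in \cite{Andrea1,BCD}. The multivariate feature — correlations across different components $G^{(i)},G^{(j)}$ rather than a single Gaussian process — does not change the structure of the argument: all cross-correlations $r^{(n)}_{i,j}(k)$ are still square-summable in $k$ uniformly in $n$ by Assumption~\ref{A2}, so the same chain of inequalities closes. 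Once (i) and (ii) hold, the Peccati--Tudor theorem yields joint convergence of $\textbf{F}_n$ to the centered Gaussian vector with the computed covariance; reading off the block structure and rewriting the limit in terms of a $p^2$-dimensional Brownian motion evaluated at the interval endpoints gives precisely the claimed statement, with $\textbf{D}$ identified entry-by-entry through the index correspondence $(i,j)\leftrightarrow((x,y),(z,w))$.
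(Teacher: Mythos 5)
Your proposal follows essentially the same route as the paper: compute the asymptotic covariances via the isometry property (identifying the entries of $\textbf{D}$ through the same index correspondence), verify that the $\otimes_{1}$-contractions vanish under Assumption \ref{A2} by the arguments of \cite{Andrea1,BCD}, and conclude with the Peccati--Tudor multivariate fourth moment theorem (Theorem 6.2.3 of \cite{NP}), which is exactly the paper's strategy. One small correction: for adjacent intervals such as $(0,1]$ and $(1,2]$ (the case the paper reduces to) the lag between increments is \emph{not} bounded below by a quantity of order $n$ --- it starts at $k=1$ --- so the cross-interval covariance does not vanish by a tail estimate $\sum_{k\geq cn}(r^{(n)}(k))^{2}\to 0$; rather it equals $\frac{1}{n}\sum_{k=1}^{n}k(\cdots)+\frac{1}{n}\sum_{k=n+1}^{2n-1}(2n-k)(\cdots)$ and vanishes by the weighted Ces\`{a}ro argument of the Remark following Assumption \ref{A2}, a tool you already invoke elsewhere in Step 1.
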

\begin{proof}
First, we compute the covariances. Notice that it is sufficient to focus on the case $l=2$ and $a_{1}=0,b_{1}=a_{2}=1,b_{2}=2$. Recall the isometry property of integrals (\textit{i.e}.~Proposition \ref{isometry property of integrals}) and that for $f_{1},g_{1},f_{2},g_{2}\in\mathcal{H}$ we have $\langle f_{1}\otimes g_{1},f_{2}\otimes g_{2}\rangle_{\mathcal{H}^{\otimes 2}}:=\langle f_{1},f_{2}\rangle_{\mathcal{H}}\langle g_{1},g_{2}\rangle_{\mathcal{H}}$. Then, for $x,y,z,w=1,...,p$, we have
\begin{equation*}
\mathbb{E}[(Z_{(x,y),1}^{n}-Z_{(x,y),0}^{n})(Z_{(z,w),2}^{n}-Z_{(z,w),1}^{n})]
=\frac{2}{n}\left\langle\sum_{i=1}^{n}\dfrac{\Delta^{n}_{i}G^{(x)}}{\tau_{n}^{(x)}}\tilde{\otimes}\dfrac{\Delta^{n}_{i}G^{(y)}}{\tau_{n}^{(y)}},\sum_{j=n+1}^{2n}\dfrac{\Delta^{n}_{j}G^{(z)}}{\tau_{n}^{(z)}}\tilde{\otimes}\dfrac{\Delta^{n}_{j}G^{(w)}}{\tau_{n}^{(w)}} \right\rangle_{\mathcal{H}^{\otimes 2}}
\end{equation*}
\begin{equation*}
=\frac{1}{2n}\sum_{i=1}^{n}\sum_{j=n+1}^{2n}\left\langle\dfrac{\Delta^{n}_{i}G^{(x)}}{\tau_{n}^{(x)}}\otimes\dfrac{\Delta^{n}_{i}G^{(y)}}{\tau_{n}^{(y)}},\dfrac{\Delta^{n}_{j}G^{(z)}}{\tau_{n}^{(z)}}\otimes\dfrac{\Delta^{n}_{j}G^{(w)}}{\tau_{n}^{(w)}} \right\rangle_{\mathcal{H}^{\otimes 2}}
\end{equation*}
\begin{equation*}
+\left\langle\dfrac{\Delta^{n}_{i}G^{(y)}}{\tau_{n}^{(y)}}\otimes\dfrac{\Delta^{n}_{i}G^{(x)}}{\tau_{n}^{(x)}},\dfrac{\Delta^{n}_{j}G^{(z)}}{\tau_{n}^{(z)}}\otimes\dfrac{\Delta^{n}_{j}G^{(w)}}{\tau_{n}^{(w)}} \right\rangle_{\mathcal{H}^{\otimes 2}}+\left\langle\dfrac{\Delta^{n}_{i}G^{(x)}}{\tau_{n}^{(x)}}\otimes\dfrac{\Delta^{n}_{i}G^{(y)}}{\tau_{n}^{(y)}},\dfrac{\Delta^{n}_{j}G^{(w)}}{\tau_{n}^{(w)}}\otimes\dfrac{\Delta^{n}_{j}G^{(z)}}{\tau_{n}^{(z)}} \right\rangle_{\mathcal{H}^{\otimes 2}}
\end{equation*}
\begin{equation*}
+\left\langle\dfrac{\Delta^{n}_{i}G^{(y)}}{\tau_{n}^{(y)}}\otimes\dfrac{\Delta^{n}_{i}G^{(x)}}{\tau_{n}^{(x)}},\dfrac{\Delta^{n}_{j}G^{(w)}}{\tau_{n}^{(w)}}\otimes\dfrac{\Delta^{n}_{j}G^{(z)}}{\tau_{n}^{(z)}} \right\rangle_{\mathcal{H}^{\otimes 2}}
\end{equation*}
\begin{equation*}
=\frac{1}{2n}\sum_{i=1}^{n}\sum_{j=n+1}^{2n}\mathbb{E}\left[\dfrac{\Delta^{n}_{i}G^{(x)}}{\tau_{n}^{(x)}}\dfrac{\Delta^{n}_{j}G^{(z)}}{\tau_{n}^{(z)}}\right]\mathbb{E}\left[\dfrac{\Delta^{n}_{i}G^{(y)}}{\tau_{n}^{(y)}}\dfrac{\Delta^{n}_{j}G^{(w)}}{\tau_{n}^{(w)}}\right]+\mathbb{E}\left[\dfrac{\Delta^{n}_{i}G^{(y)}}{\tau_{n}^{(y)}}\dfrac{\Delta^{n}_{j}G^{(z)}}{\tau_{n}^{(z)}}\right]\mathbb{E}\left[\dfrac{\Delta^{n}_{i}G^{(x)}}{\tau_{n}^{(x)}}\dfrac{\Delta^{n}_{j}G^{(w)}}{\tau_{n}^{(w)}}\right]
\end{equation*}
\begin{equation*}
+\mathbb{E}\left[\dfrac{\Delta^{n}_{i}G^{(x)}}{\tau_{n}^{(x)}}\dfrac{\Delta^{n}_{j}G^{(w)}}{\tau_{n}^{(w)}}\right]\mathbb{E}\left[\dfrac{\Delta^{n}_{i}G^{(y)}}{\tau_{n}^{(y)}}\dfrac{\Delta^{n}_{j}G^{(z)}}{\tau_{n}^{(z)}}\right]+\mathbb{E}\left[\dfrac{\Delta^{n}_{i}G^{(y)}}{\tau_{n}^{(y)}}\dfrac{\Delta^{n}_{j}G^{(w)}}{\tau_{n}^{(w)}}\right]\mathbb{E}\left[\dfrac{\Delta^{n}_{i}G^{(x)}}{\tau_{n}^{(x)}}\dfrac{\Delta^{n}_{j}G^{(z)}}{\tau_{n}^{(z)}}\right]
\end{equation*}
\begin{equation*}
=\frac{1}{n}\sum_{i=1}^{n}\sum_{j=n+1}^{2n}\mathbb{E}\left[\dfrac{\Delta^{n}_{i}G^{(x)}}{\tau_{n}^{(x)}}\dfrac{\Delta^{n}_{j}G^{(z)}}{\tau_{n}^{(z)}}\right]\mathbb{E}\left[\dfrac{\Delta^{n}_{i}G^{(y)}}{\tau_{n}^{(y)}}\dfrac{\Delta^{n}_{j}G^{(w)}}{\tau_{n}^{(w)}}\right]+\mathbb{E}\left[\dfrac{\Delta^{n}_{i}G^{(y)}}{\tau_{n}^{(y)}}\dfrac{\Delta^{n}_{j}G^{(z)}}{\tau_{n}^{(z)}}\right]\mathbb{E}\left[\dfrac{\Delta^{n}_{i}G^{(x)}}{\tau_{n}^{(x)}}\dfrac{\Delta^{n}_{j}G^{(w)}}{\tau_{n}^{(w)}}\right]
\end{equation*}
\begin{equation*}
=\frac{1}{n}\sum_{k=1}^{n}k\left(r_{x,z}^{(n)}(k)r_{y,w}^{(n)}(k)+r_{y,z}^{(n)}(k)r_{x,w}^{(n)}(k)\right)+\frac{1}{n}\sum_{k=n+1}^{2n-1}(2n-k)\left(r_{x,z}^{(n)}(k)r_{y,w}^{(n)}(k)+r_{y,z}^{(n)}(k)r_{x,w}^{(n)}(k)\right).
\end{equation*}
Hence, we have that, for $x,y,z,w=1,...,p$,
\begin{equation*}
\mathbb{E}[(Z_{(x,y),1}^{n}-Z_{(x,y),0}^{n})(Z_{(z,w),2}^{n}-Z_{(z,w),1}^{n})]
\end{equation*}
\begin{equation*}
=\frac{1}{n}\sum_{k=1}^{n}k\left(r_{x,z}^{(n)}(k)r_{y,w}^{(n)}(k)+r_{y,z}^{(n)}(k)r_{x,w}^{(n)}(k)\right)+\frac{1}{n}\sum_{k=n+1}^{2n-1}(2n-k)\left(r_{x,z}^{(n)}(k)r_{y,w}^{(n)}(k)+r_{y,z}^{(n)}(k)r_{x,w}^{(n)}(k)\right).
\end{equation*}
Under Assumption \ref{A2} we have that $\lim\limits_{n\rightarrow\infty}\mathbb{E}[(Z_{(x,y),1}^{n}-Z_{(x,y),0}^{n})(Z_{(z,w),2}^{n}-Z_{(z,w),1}^{n})]=0$.
\\ For the variances, following similar computations as above, we have
\begin{equation*}
\mathbb{E}[(Z_{(x,y),1}^{n}-Z_{(x,y),0}^{n})(Z_{(z,w),1}^{n}-Z_{(z,w),0}^{n})]
\end{equation*}
\begin{equation*}
=\frac{2}{n}\sum_{k=1}^{n-1}(n-k)\left(r_{x,z}^{(n)}(k)r_{y,w}^{(n)}(k)+r_{y,z}^{(n)}(k)r_{x,w}^{(n)}(k)\right)+\left(r_{x,z}^{(n)}(0)r_{y,w}^{(n)}(0)+r_{y,z}^{(n)}(0)r_{x,w}^{(n)}(0)\right).
\end{equation*}
Under Assumption \ref{A2} we have that $\lim\limits_{n\rightarrow\infty}\mathbb{E}[(Z_{(x,y),1}^{n}-Z_{(x,y),0}^{n})(Z_{(z,w),1}^{n}-Z_{(z,w),0}^{n})]<\infty$.

Further, since we need to use a matrix formulation for our result, we need to associate an element $((x,y),(z,w)))$ with $x,y,z,w=1,...,p$ to $(i,j)$ with $i,j=1,...,p^{2}$. Thus, we make the following association $i,j=1\leftrightarrow (1,1)$; $i,j=2\leftrightarrow (1,2)$;...; $i,j=p\leftrightarrow (1,p)$; $i,j=p+1\leftrightarrow (2,1)$;...; $i,j=2p\leftrightarrow (2,p)$; $i,j=2p+1\leftrightarrow (3,1)$;...; $i,j=p^{2}\leftrightarrow (p,p)$. This can be written compactly as $(i,j)\leftrightarrow((\lfloor \frac{i-1}{p} \rfloor+1,i-p\lfloor \frac{i-1}{p} \rfloor),(\lfloor \frac{j-1}{p} \rfloor+1,j-p\lfloor \frac{j-1}{p} \rfloor))$.\\
The last step is to show that for $l=1,..,d$ and $x,y=1,...,p$
\begin{equation*}
\Bigg\|\left( \frac{1}{\sqrt{n}}\sum_{i=\lfloor na_{l}\rfloor+1}^{\lfloor nb_{l}\rfloor}\dfrac{\Delta^{n}_{i}G^{(x)}}{\tau_{n}^{(x)}}\tilde{\otimes}\dfrac{\Delta^{n}_{i}G^{(y)}}{\tau_{n}^{(y)}} \right)\otimes_{1}\left( \frac{1}{\sqrt{n}}\sum_{i=\lfloor na_{l}\rfloor+1}^{\lfloor nb_{l}\rfloor}\dfrac{\Delta^{n}_{i}G^{(x)}}{\tau_{n}^{(x)}}\tilde{\otimes}\dfrac{\Delta^{n}_{i}G^{(y)}}{\tau_{n}^{(y)}} \right)\Bigg\|_{\mathcal{H}^{\otimes 2}}\rightarrow0.
\end{equation*}
However, this is true under the Assumption \ref{A2} by using the same arguments used in the proof of Theorem 4.2 of \cite{Andrea1}.
\end{proof}
\begin{pro}\label{tight-M}
Under the Assumption \ref{A2}, let $\mathbf{P}^{n}$ be the law of the process $\{\textbf{Z}^{n}_{t}\}_{t\in[0,T]}$ on the Skorokhod space $\mathcal{D}([0,T],\mathbb{R}^{p^{2}})$ Then, the sequence $\{\mathbf{P}^{n}\}_{n\in\mathbb{N}}$ is tight.
\end{pro}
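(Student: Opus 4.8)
The plan is to establish tightness in the Skorokhod space $\mathcal{D}([0,T],\mathbb{R}^{p^2})$ via a moment criterion applied componentwise. Since tightness of a finite family of coordinate processes implies tightness of the vector process (the vector law is tight iff each marginal is tight, because $\mathbb{R}^{p^2}$ is finite-dimensional and the sup-norm we use is equivalent to the maximum of the coordinate sup-norms), it suffices to show that for each fixed pair $(i,j)$ the sequence of laws of $\{Z^n_{(i,j),t}\}_{t\in[0,T]}$ is tight. For this I would invoke a Kolmogorov–Chentsov-type tightness criterion: it is enough to exhibit a moment bound of the form $\mathbb{E}\big[(Z^n_{(i,j),t}-Z^n_{(i,j),s})^{2}\big]\leq C\,|t-s|$ uniformly in $n$, together with $Z^n_{(i,j),0}=0$, PROVIDED one upgrades to a fourth-moment bound $\mathbb{E}\big[(Z^n_{(i,j),t}-Z^n_{(i,j),s})^{4}\big]\leq C\,|t-s|^{1+\epsilon}$ for some $\epsilon>0$, which is the usual requirement for tightness in $\mathcal{D}$. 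Fortunately $Z^n_{(i,j),t}-Z^n_{(i,j),s}$ lives in the second Wiener chaos $\mathcal{W}_2$, so by hypercontractivity (equivalently, the explicit relation $\mathbb{E}[F^4]\le 9\,(\mathbb{E}[F^2])^2$ for $F\in\mathcal{W}_2$) the fourth moment is controlled by the square of the second moment, and it remains only to bound the second moment.

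Thus the heart of the proof is the estimate
\begin{equation*}
\mathbb{E}\big[(Z^n_{(i,j),t}-Z^n_{(i,j),s})^{2}\big]\leq C\,\frac{\lfloor nt\rfloor-\lfloor ns\rfloor}{n}\leq C'\,|t-s|+\tfrac{C'}{n},
\end{equation*}
uniformly in $n$ and $0\le s\le t\le T$. To obtain this I would repeat the covariance computation already carried out in the proof of Proposition \ref{C-M}: writing $Z^n_{(i,j),t}-Z^n_{(i,j),s}=\frac{1}{\sqrt n}\sum_{l=\lfloor ns\rfloor+1}^{\lfloor nt\rfloor}I_2\big(\tfrac{\Delta^n_l G^{(i)}}{\tau_n^{(i)}}\tilde\otimes\tfrac{\Delta^n_l G^{(j)}}{\tau_n^{(j)}}\big)$, applying the isometry property (Proposition \ref{isometry property of integrals}), and expanding the symmetrised tensor product, one finds that with $m:=\lfloor nt\rfloor-\lfloor ns\rfloor$,
\begin{equation*}
\mathbb{E}\big[(Z^n_{(i,j),t}-Z^n_{(i,j),s})^{2}\big]=\frac{1}{n}\sum_{|k|<m}(m-|k|)\Big(r^{(n)}_{i,i}(k)r^{(n)}_{j,j}(k)+r^{(n)}_{i,j}(k)r^{(n)}_{j,i}(k)\Big).
\end{equation*}
Bounding $m-|k|\le m$ and using $|r^{(n)}_{a,b}(k)|\le 1$ (Cauchy–Schwarz, since the $r^{(n)}_{a,b}$ are normalised correlations), the sum is at most $\frac{m}{n}\sum_{|k|<m}\big(|r^{(n)}_{i,i}(k)r^{(n)}_{j,j}(k)|+|r^{(n)}_{i,j}(k)r^{(n)}_{j,i}(k)|\big)$, and by Cauchy–Schwarz on the $k$-sum together with Assumption \ref{A2} (which makes $\sum_{k\ge1}(r^{(n)}_{a,b}(k))^2$ bounded uniformly in $n$ for all relevant index pairs) this is $\le C\,\frac{m}{n}$, as required. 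Chaining over the coordinates and using stationarity of $\{\mathbf{G}_t\}$ to ensure $C$ does not depend on $s$, this delivers the uniform modulus estimate.

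The step I expect to be the main obstacle is the technical one of matching the moment bound to a genuine tightness criterion on $\mathcal{D}([0,T],\mathbb{R}^{p^2})$ rather than on $C([0,T],\mathbb{R}^{p^2})$: the estimate above is only Lipschitz-in-$|t-s|$ up to the floor-function discretisation error $O(1/n)$, so one cannot directly apply the classical Kolmogorov criterion for continuous processes. The standard remedy — and the one I would follow, as is done in \cite{Andrea1} and \cite{BCD} — is to use the Billingsley/Jacod–Shiryaev tightness criterion for the Skorokhod topology, which tolerates such discretisation terms because the processes are piecewise constant and the jumps are of size $O(1/\sqrt n)\to 0$; alternatively one verifies Aldous' criterion directly using the same second-moment bound applied to increments over stopping-time intervals. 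Once this is in place, tightness of each coordinate, and hence of the $\mathbb{R}^{p^2}$-valued process, follows, completing the proof.
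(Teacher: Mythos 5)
Your proposal is correct and follows essentially the same route as the paper, which simply reduces to componentwise tightness and cites the one-dimensional arguments of \cite{BCD}, \cite{Andrea1} and \cite{Cor} — namely the second-moment bound $\mathbb{E}[(Z^{n}_{(i,j),t}-Z^{n}_{(i,j),s})^{2}]\leq C(\lfloor nt\rfloor-\lfloor ns\rfloor)/n$ obtained from the isometry property and Assumption \ref{A2}, upgraded via hypercontractivity on the second Wiener chaos and fed into a Billingsley-type criterion that tolerates the $O(1/n)$ discretisation. (Only a cosmetic remark: for $F\in\mathcal{W}_{2}$ the hypercontractive constant is $3^{4}=81$ rather than $9$, which of course changes nothing.)
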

\begin{proof}
It follows from the tightness of the components of the vector $\textbf{Z}^{n}_{t}$ which is proved following the same arguments as in the proof of Theorem 2 in \cite{BCD} or of Theorem 4.3 of \cite{Andrea1} or of Theorem 7 of \cite{Cor}. The differences are that instead of $i,j=1,2$, we have $i,j=1,...,p$ and that we have a more general but still sufficient assumption (\textit{i.e}.~Assumption \ref{A2}).
\end{proof}
\begin{thm}\label{CLT-M}
Let the Assumption \ref{A2} hold. Then we have
\begin{equation*}
\left\{\frac{1}{\sqrt{n}}\sum_{l=1}^{\lfloor nt\rfloor}\left(\dfrac{\Delta^{n}_{l}G^{(i)}}{\tau_{n}^{(i)}}\dfrac{\Delta^{n}_{l}G^{(j)}}{\tau_{n}^{(j)}}-\mathbb{E}\left[\dfrac{\Delta^{n}_{l}G^{(i)}}{\tau_{n}^{(i)}}\dfrac{\Delta^{n}_{l}G^{(j)}}{\tau_{n}^{(j)}} \right] \right)_{i,j=1,...,p}\right\}_{t\in[0,T]}\stackrel{st}{\rightarrow}\left\{\textbf{D}^{\frac{1}{2}}B_{t}\right\}_{t\in[0,T]},
\end{equation*}
where $\textbf{D}$ and $B_{t}$ are given in Proposition \ref{C-M}. In particular, $B_{t}$ is independent of $G^{(1)},...,G^{(p)}$ and the convergence is in $\mathcal{D}([0,T],\mathbb{R}^{p^{2}})$, namely the Skorokhod space equipped with the uniform metric.
\end{thm}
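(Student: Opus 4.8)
The plan is to first obtain convergence in law of $\{\mathbf{Z}^{n}_{t}\}_{t\in[0,T]}$ in the Skorokhod space, and then to upgrade it to \emph{stable} convergence by exploiting the Wiener chaos structure. The starting point is the product formula identity derived just above, which shows that $I_{2}\big(\Delta^{n}_{l}G^{(i)}/\tau_{n}^{(i)}\,\tilde{\otimes}\,\Delta^{n}_{l}G^{(j)}/\tau_{n}^{(j)}\big)$ equals $\big(\Delta^{n}_{l}G^{(i)}/\tau_{n}^{(i)}\big)\big(\Delta^{n}_{l}G^{(j)}/\tau_{n}^{(j)}\big)-\mathbb{E}\big[\big(\Delta^{n}_{l}G^{(i)}/\tau_{n}^{(i)}\big)\big(\Delta^{n}_{l}G^{(j)}/\tau_{n}^{(j)}\big)\big]$, so the process appearing in the statement is exactly $\{\mathbf{Z}^{n}_{t}\}_{t\in[0,T]}$. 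Hence it suffices to prove $\{\mathbf{Z}^{n}_{t}\}_{t\in[0,T]}\stackrel{st}{\rightarrow}\{\mathbf{D}^{1/2}B_{t}\}_{t\in[0,T]}$ in $\mathcal{D}([0,T],\mathbb{R}^{p^{2}})$ with the uniform metric.

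\emph{Step 1 (convergence in law).} Since $\mathbf{Z}^{n}_{0}=0$, for times $0\leq s_{1}<\cdots<s_{k}\leq T$ one writes $\mathbf{Z}^{n}_{s_{j}}=\sum_{m=1}^{j}(\mathbf{Z}^{n}_{s_{m}}-\mathbf{Z}^{n}_{s_{m-1}})$ with $s_{0}=0$, so that the finite-dimensional distributions of $\mathbf{Z}^{n}$ are continuous images of the joint laws of increments over pairwise disjoint intervals. By Proposition~\ref{C-M} these converge to independent centred Gaussian vectors with covariances given by $\mathbf{D}$ times the interval lengths, that is, to the finite-dimensional distributions of $\mathbf{D}^{1/2}B$; here $\mathbf{D}$ is symmetric positive semi-definite, being the limit of the covariance matrices computed in the proof of Proposition~\ref{C-M}, so $\mathbf{D}^{1/2}$ is well defined. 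Combining this with the tightness from Proposition~\ref{tight-M} and Prokhorov's theorem yields $\mathbf{Z}^{n}\stackrel{d}{\rightarrow}\mathbf{D}^{1/2}B$ in $\mathcal{D}([0,T],\mathbb{R}^{p^{2}})$; since the limiting process has continuous paths, as noted in Section~\ref{preliminary} the convergence also holds in the uniform metric.

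\emph{Step 2 (upgrade to stable convergence).} Recall that $\mathcal{F}$ is generated by $\mathcal{H}$, equivalently by the Gaussian family $\{G^{(j)}_{t}:t\in[0,T],\,j=1,\dots,p\}$. By the standard characterisation of stable convergence (towards a limit independent of $\mathcal{F}$), it is enough to show that for every finite family $t_{1},\dots,t_{m}\in[0,T]$ and $j_{1},\dots,j_{m}\in\{1,\dots,p\}$ one has $\big(\mathbf{Z}^{n},(G^{(j_{1})}_{t_{1}},\dots,G^{(j_{m})}_{t_{m}})\big)\stackrel{d}{\rightarrow}\big(\mathbf{D}^{1/2}B,(G^{(j_{1})}_{t_{1}},\dots,G^{(j_{m})}_{t_{m}})\big)$ with $B$ independent of the Gaussian vector. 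The coordinates of $\mathbf{Z}^{n}_{t}$ lie in the second Wiener chaos $\mathcal{W}_{2}$, while $(G^{(j_{1})}_{t_{1}},\dots,G^{(j_{m})}_{t_{m}})$ is a fixed element of $\mathcal{W}_{1}$; the $\otimes_{1}$-contractions of the second-chaos components tend to $0$ (this was checked in the proof of Proposition~\ref{C-M}), so by the multivariate fourth-moment theorem for vectors of multiple integrals (see \cite{NP}, as used already in the proof of Proposition~\ref{C-M}) the joint vector converges to a Gaussian vector. Its covariance is block-diagonal, since by the isometry property of integrals (Proposition~\ref{isometry property of integrals}) every element of $\mathcal{W}_{2}$ is orthogonal to every element of $\mathcal{W}_{1}$; hence the limit $\mathbf{D}^{1/2}B$ is independent of the Gaussian core. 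Together with the tightness of the joint process (which follows from Proposition~\ref{tight-M} and the fact that the second coordinate is a fixed, hence tight, random element), this gives the asserted stable convergence, and in particular the stated independence of $B$ from $G^{(1)},\dots,G^{(p)}$.

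\emph{Main obstacle.} The delicate point is Step~2: the generating family $(\ref{generate})$ is itself $n$-dependent, so one cannot test $\mathbf{Z}^{n}$ directly against ``the'' underlying isonormal Gaussian process. This is circumvented by testing against the fixed variables $G^{(j_{k})}_{t_{k}}$, which generate $\mathcal{F}$ and do not depend on $n$, and then invoking the asymptotic decoupling of the Wiener chaoses; the remaining bookkeeping follows the scheme of the CLTs in \cite{BCD} and \cite{Andrea1}, now carried out in the $p$-dimensional setting and under the weaker Assumption~\ref{A2}.
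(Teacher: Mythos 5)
Your proposal is correct and follows essentially the same route as the paper: convergence in law from Proposition \ref{C-M} plus the tightness of Proposition \ref{tight-M}, and then the upgrade to stable convergence via joint convergence with the first-chaos variables $G^{(j)}_{t}$, the Peccati--Tudor multivariate fourth-moment theorem (Theorem 6.2.3 of \cite{NP}), and the orthogonality of the first and second Wiener chaoses to obtain independence of the limit. The only cosmetic difference is that the paper phrases the final step through condition $D''$ of Proposition 2 in \cite{Aldo} (conditioning on events generated by finitely many of the statistics themselves), whereas you test directly against the generators of $\mathcal{F}$; these are equivalent formulations of the same mixing-convergence argument.
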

\begin{proof}
First, notice that
\begin{equation*}
\left\{\frac{1}{\sqrt{n}}\sum_{l=1}^{\lfloor nt\rfloor}\left(\dfrac{\Delta^{n}_{l}G^{(i)}}{\tau_{n}^{(i)}}\dfrac{\Delta^{n}_{l}G^{(j)}}{\tau_{n}^{(j)}}-\mathbb{E}\left[\dfrac{\Delta^{n}_{l}G^{(i)}}{\tau_{n}^{(i)}}\dfrac{\Delta^{n}_{l}G^{(j)}}{\tau_{n}^{(j)}} \right] \right)_{i,j=1,...,p}\right\}_{t\in[0,T]}\stackrel{d}{\rightarrow}\left\{\textbf{D}^{\frac{1}{2}}B_{t}\right\}_{t\in[0,T]},
\end{equation*}
follows from Proposition $\ref{C-M}$ and Proposition \ref{tight-M}, and following the arguments used in the proof of Theorem 4.4 in \cite{Andrea1}. The independence of $B_{t}$ from $G^{(1)}_{t},...,G^{(p)}_{t}$ is given by the fact that $G^{(i)}_{t},...,G^{(p)}_{t}$ belong to the first Wiener chaos, while $B_{t}$  is the limiting process of objects belonging to the second Wiener chaos. Moreover, we have
\begin{equation*}\label{G and the Wiener chaos}
\left\{\left(G_{t}^{(i)}\right)_{i=1,...,p},\frac{1}{\sqrt{n}}\sum_{l=1}^{\lfloor nt \rfloor}\left(\dfrac{\Delta^{n}_{l}G^{(i)}}{\tau_{n}^{(i)}}\dfrac{\Delta^{n}_{l}G^{(j)}}{\tau_{n}^{(j)}}-\mathbb{E}\left[\dfrac{\Delta^{n}_{l}G^{(i)}}{\tau_{n}^{(i)}}\dfrac{\Delta^{n}_{l}G^{(j)}}{\tau_{n}^{(j)}} \right] \right)_{i,j=1,...,p}\right\}_{t\in[0,T]}
\end{equation*}
\begin{equation}\label{AldoProp2}
\stackrel{d}{\rightarrow}\left\{\left(G_{t}^{(i)}\right)_{i=1,...,p},\textbf{D}^{\frac{1}{2}}B_{t}\right\}_{t\in[0,T]},
\end{equation}
in the space $\mathcal{D}([0,T],\mathbb{R}^{p}\times\mathbb{R}^{p^{2}})$, namely the Skorokhod space equipped with the uniform metric. Notice that this result comes from the convergence of the finite dimensional distributions of $(\ref{G and the Wiener chaos})$, which follows from the arguments at the beginning of this proof together with the orthogonality of different Wiener chaos, and from the tightness of the law of $(\ref{G and the Wiener chaos})$, which follows from the tightness of each component of the vector proved in Proposition \ref{tight-M}. 
\\Concerning the convergence of the finite dimensional distributions, notice that for any $t\in[0,T]$ each element of $\left(G_{t}^{(i)}\right)_{i=1,...,p}$ and of $\frac{1}{\sqrt{n}}\sum_{l=1}^{\lfloor nt \rfloor}\left(\frac{\Delta^{n}_{l}G^{(i)}}{\tau_{n}^{(i)}}\frac{\Delta^{n}_{l}G^{(j)}}{\tau_{n}^{(j)}}-\mathbb{E}\left[\frac{\Delta^{n}_{l}G^{(i)}}{\tau_{n}^{(i)}}\frac{\Delta^{n}_{l}G^{(j)}}{\tau_{n}^{(j)}} \right] \right)_{i,j=1,...,p}$ belong to the first and second Wiener chaos, respectively; hence, for any $i,j,k=1,...,p$ and $s,t\in[0,T]$, we have
\begin{equation*}
\mathbb{E}\left[G_{s}^{(k)}\frac{1}{\sqrt{n}}\sum_{l=1}^{\lfloor nt \rfloor}\left(\frac{\Delta^{n}_{l}G^{(i)}}{\tau_{n}^{(i)}}\frac{\Delta^{n}_{l}G^{(j)}}{\tau_{n}^{(j)}}-\mathbb{E}\left[\frac{\Delta^{n}_{l}G^{(i)}}{\tau_{n}^{(i)}}\frac{\Delta^{n}_{l}G^{(j)}}{\tau_{n}^{(j)}} \right] \right)\right]=0.
\end{equation*}
Then, the convergence of the finite dimensional distributions, that is for any $M\in\mathbb{N}$ and disjoint intervals $[a_{m},b_{m}]$ with $m=1,...,M$
\begin{equation*}
\left(\left(G_{b_{m}}^{(i)}-G_{a_{m}}^{(i)}\right)_{i=1,...,p},\frac{1}{\sqrt{n}}\sum_{l=a_{m}}^{\lfloor nb_{m} \rfloor}\left(\frac{\Delta^{n}_{l}G^{(i)}}{\tau_{n}^{(i)}}\frac{\Delta^{n}_{l}G^{(j)}}{\tau_{n}^{(j)}}-\mathbb{E}\left[\frac{\Delta^{n}_{l}G^{(i)}}{\tau_{n}^{(i)}}\frac{\Delta^{n}_{l}G^{(j)}}{\tau_{n}^{(j)}} \right] \right)_{i,j=1,...,p}\right)_{m=1,...,M}
\end{equation*}
\begin{equation*}
\stackrel{d}{\rightarrow}\left(\left(G_{b_{m}}^{(i)}-G_{a_{m}}^{(i)}\right)_{i=1,...,p},\textit{\textbf{D}}^{1/2}(B_{b_{m}}-B_{a_{m}})\right)_{m=1,...,M},
\end{equation*}
follow from Theorem 6.2.3 in \cite{NP} and the previous arguments.

In order to obtain the stable convergence is sufficient to use Proposition 2 of \cite{Aldo}. Observe that condition $D''$ in that proposition is implied by $(\ref{AldoProp2})$ using Bayes' theorem and independence of the limiting process $\{B_{t}\}_{t\in[0,T]}$ from $\left(\{G_{t}^{(i)}\}_{t\in[0,T]}\right)_{i=1,...,p}$. In particular, notice that by Bayes' theorem: $(\ref{AldoProp2})\Rightarrow$ for all fixed $n_{1},...,n_{k}\in\mathbb{N}$ and
\begin{equation*}
A\in\sigma\Bigg(\frac{1}{\sqrt{n_{1}}}\sum_{l=1}^{\lfloor n_{1}t \rfloor}\left(\frac{\Delta^{n_{1}}_{l}G^{(i)}}{\tau_{n_{1}}^{(i)}}\frac{\Delta^{n_{1}}_{l}G^{(j)}}{\tau_{n_{1}}^{(j)}}-\mathbb{E}\left[\frac{\Delta^{n_{1}}_{l}G^{(i)}}{\tau_{n_{1}}^{(i)}}\frac{\Delta^{n_{1}}_{l}G^{(j)}}{\tau_{n_{1}}^{(j)}} \right] \right)_{i,j=1,...,p},...,
\end{equation*}
\begin{equation*}
\frac{1}{\sqrt{n_{k}}}\sum_{l=1}^{\lfloor n_{k}t \rfloor}\left(\frac{\Delta^{n_{k}}_{l}G^{(i)}}{\tau_{n_{k}}^{(i)}}\frac{\Delta^{n_{k}}_{l}G^{(j)}}{\tau_{n_{k}}^{(j)}}-\mathbb{E}\left[\frac{\Delta^{n_{k}}_{l}G^{(i)}}{\tau_{n_{k}}^{(i)}}\frac{\Delta^{n_{k}}_{l}G^{(j)}}{\tau_{n_{k}}^{(j)}} \right] \right)_{i,j=1,...,p}\Bigg),\text{ with $\mathbb{P}(A)>0$, we have }
\end{equation*}
\begin{equation*}
\lim\limits_{n\rightarrow\infty}\mathbb{P}\left(\left(\frac{1}{\sqrt{n}}\sum_{l=1}^{\lfloor nt \rfloor}\frac{\Delta^{n}_{l}G^{(i)}}{\tau_{n}^{(i)}}\frac{\Delta^{n}_{l}G^{(j)}}{\tau_{n}^{(j)}}-\mathbb{E}\left[\frac{\Delta^{n}_{l}G^{(i)}}{\tau_{n}^{(i)}}\frac{\Delta^{n}_{l}G^{(j)}}{\tau_{n}^{(j)}} \right] <x^{(i,j)}\right)_{i,j=1,...,p}\Bigg|A\right)
\end{equation*}
\begin{equation*}
=\mathbb{P}\left(\left(\left(\textit{\textbf{D}}^{1/2}B_{t}\right)_{i,j}<x^{(i,j)}\right)_{i,j=1,...,p}\Bigg|A\right).
\end{equation*}
In addition, by the independence of the limiting process $\{B_{t}\}_{t\in[0,T]}$ from $\left(\{G_{t}^{(i)}\}_{t\in[0,T]}\right)_{i=1,...,p}$ we get
\begin{equation*}
\mathbb{P}\left(\left(\left(\textit{\textbf{D}}^{1/2}B_{t}\right)_{i,j}<x^{(i,j)}\right)_{i,j=1,...,p}\Bigg|A\right)=\mathbb{P}\left(\left(\left(\textit{\textbf{D}}^{1/2}B_{t}\right)_{i,j}<x^{(i,j)}\right)_{i,j=1,...,p}\right).
\end{equation*}
Following the same computations, it is possible to show the result for any set of points $\{t_{1},....,t_{a}\}\in[0,T]^{a}$ for $a\in\mathbb{N}$ and not just one $t\in[0,T]$. Therefore, we obtain mixing convergence (see \cite{Aldo} for details), hence stable convergence, in $\mathcal{D}([0,T],\mathbb{R}^{p^{2}})$.
\end{proof}
\subsection{Case II}\label{A second case}
Let $i,j=1,...,p$. In this section we will show that the results presented in the previous section hold for different choice of $\tau_{n}$. The new $\tau_{n}$ have an order equal to or greater than the order of the previous $\tau_{n}$ as $n$ goes to infinity. Indeed, let $\tau_{n}^{(\beta(j))}:=O\left(\tau_{n}^{(j)}\right)$ for $j=1,...,p$ (\textit{e.g}.~consider $\tau_{n}^{(\beta(j))}=\max\limits_{j=1,...,p}\left(\tau_{n}^{(j)}\right)$ for some $j$ or see Example \ref{Example1}). In this section we will work with the Hilbert space generated by the Gaussian random variables 
\begin{equation*}
\left(\dfrac{\Delta^{n}_{l}G^{(j)}}{\tau_{n}^{(\beta(j))}}\right)_{n\geq1,1\leq l\leq \lfloor nt\rfloor,j\in\{1,...,p\}}\quad\text{and let}\quad r_{i,j}^{(\beta),(n)}(k):=\mathbb{E}\left[\frac{\Delta^{n}_{1}G^{(i)}}{\tau_{n}^{(\beta(i))}}\frac{\Delta^{n}_{1+k}G^{(j)}}{\tau_{n}^{(\beta(j))}} \right].
\end{equation*}
\begin{ass}\label{A3}Let $\lim\limits_{n\rightarrow\infty}\sum_{k=1}^{\infty}\left(r_{i,j}^{(\beta),(n)}(k)\right)^{2}<\infty$, for $i,j=1,...,p$.
\end{ass}
We can now present and prove a modification of the main result of the previous section.
\begin{thm}\label{2-CLT-M}
Let the Assumption \ref{A3} hold. Let $\alpha,\beta=1,2$. Then we have
\begin{equation*}
\left\{\frac{1}{\sqrt{n}}\sum_{l=1}^{\lfloor nt\rfloor}\left(\dfrac{\Delta^{n}_{l}G^{(i)}}{\tau_{n}^{(\beta(i))}}\dfrac{\Delta^{n}_{l}G^{(j)}}{\tau_{n}^{(\beta(j))}}-\mathbb{E}\left[\dfrac{\Delta^{n}_{l}G^{(i)}}{\tau_{n}^{(\beta(i))}}\dfrac{\Delta^{n}_{l}G^{(j)}}{\tau_{n}^{(\beta(j))}} \right] \right)_{i,j=1,...,p}\right\}_{t\in[0,T]}\stackrel{st}{\rightarrow}\left\{\textbf{D}^{(\beta)\,\frac{1}{2}}B_{t}\right\}_{t\in[0,T]},
\end{equation*}
In particular, associating for each combination $(i,j)$ a combination $((x,y),(z,w))$ where $x,y,z,w=1,...,p$ using the formula $(i,j)\leftrightarrow((\lfloor \frac{i-1}{p} \rfloor+1,i-p\lfloor \frac{i-1}{p} \rfloor),(\lfloor \frac{j-1}{p} \rfloor+1,j-p\lfloor \frac{j-1}{p} \rfloor))$, we have
\begin{equation*}
(\textbf{D})_{ij}^{(\beta)}=(\textbf{D})_{(x,y),(z,w)}^{(\beta)}=\lim\limits_{n\rightarrow\infty}\frac{2}{n}\sum_{k=1}^{n-1}(n-k)\left(r_{x,z}^{(\beta),(n)}(k)r_{y,w}^{(\beta),(n)}(k)+r_{y,z}^{(\beta),(n)}(k)r_{x,w}^{(\beta),(n)}(k)\right)
\end{equation*}
\begin{equation*}
+\left(r_{x,z}^{(\beta),(n)}(0)r_{y,w}^{(\beta),(n)}(0)+r_{y,z}^{(\beta),(n)}(0)r_{x,w}^{(\beta),(n)}(0)\right).
\end{equation*}
Further, $B_{t}$ is a $p^{2}$-dimensional Brownian motion independent of $G^{(1)},...,G^{(p)}$ and the convergence is in $\mathcal{D}([0,T],\mathbb{R}^{p^{2}})$, namely the Skorokhod space equipped with the uniform metric.
\end{thm}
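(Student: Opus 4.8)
The plan is to reprove the three results of Section~\ref{JOINT Gaussian} (Case~I) essentially verbatim, with the scaling constants $\tau_n^{(j)}$ replaced everywhere by $\tau_n^{(\beta(j))}$, the correlations $r_{i,j}^{(n)}(k)$ replaced by $r_{i,j}^{(\beta),(n)}(k)$, and Assumption~\ref{A2} replaced by Assumption~\ref{A3}. The crucial observation is that nothing in the proofs of Proposition~\ref{C-M}, Proposition~\ref{tight-M} and Theorem~\ref{CLT-M} uses the explicit form $\tau_n^{(j)}=\sqrt{\mathbb{E}[(\Delta_1^nG^{(j)})^2]}$; the only features actually used are (a) that each $\tau_n^{(j)}$ is a deterministic positive constant, so that the rescaled increments $(\Delta_l^nG^{(j)}/\tau_n^{(\beta(j))})_{n,l,j}$ are again jointly Gaussian and generate an isonormal Gaussian process on a Hilbert space $\mathcal{H}$ (the one introduced in Section~\ref{A second case}), the product formula of Theorem~\ref{Theorem 2.7.10} still gives $I_2\left(\frac{\Delta_l^nG^{(i)}}{\tau_n^{(\beta(i))}}\tilde{\otimes}\frac{\Delta_l^nG^{(j)}}{\tau_n^{(\beta(j))}}\right)=\frac{\Delta_l^nG^{(i)}}{\tau_n^{(\beta(i))}}\frac{\Delta_l^nG^{(j)}}{\tau_n^{(\beta(j))}}-\mathbb{E}[\cdots]$; and (b) the $\ell^2$-summability of the normalised correlations, which is precisely Assumption~\ref{A3} for the sequences $r_{i,j}^{(\beta),(n)}(k)$.

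First I would define $\textbf{Z}^n_t$ with kernels $\frac{\Delta_l^nG^{(i)}}{\tau_n^{(\beta(i))}}\tilde{\otimes}\frac{\Delta_l^nG^{(j)}}{\tau_n^{(\beta(j))}}$ and repeat the covariance computation of Proposition~\ref{C-M}: bilinearity of $\langle\langle\cdot,\cdot\rangle\rangle$ and the isometry property (Proposition~\ref{isometry property of integrals}) yield, for disjoint unit intervals, the same expressions as before but with every $r_{\cdot,\cdot}^{(n)}(k)$ replaced by $r_{\cdot,\cdot}^{(\beta),(n)}(k)$; under Assumption~\ref{A3} the cross-covariances between disjoint intervals vanish in the limit, the variances converge to $(\textbf{D})^{(\beta)}_{(x,y),(z,w)}$ as stated, and the contraction $\|f_n\otimes_1 f_n\|_{\mathcal{H}^{\otimes 2}}\to 0$ holds by the same Cauchy--Schwarz estimate as in the proof of Theorem~4.2 of \cite{Andrea1}, now applied to the $\ell^2$-summable sequences $r_{i,j}^{(\beta),(n)}(k)$. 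This gives convergence of the finite-dimensional distributions to $\textbf{D}^{(\beta)\,\frac{1}{2}}(B_{b_l}-B_{a_l})$. Tightness of the laws of $\{\textbf{Z}^n_t\}$ on $\mathcal{D}([0,T],\mathbb{R}^{p^2})$ follows componentwise exactly as in Proposition~\ref{tight-M}, since the estimates of \cite{BCD,Andrea1,Cor} only require the $\ell^2$-bound; hence $\textbf{Z}^n\stackrel{d}{\rightarrow}\textbf{D}^{(\beta)\,\frac{1}{2}}B$ in the Skorokhod space with uniform metric.

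Finally I would upgrade to stable convergence exactly as in the proof of Theorem~\ref{CLT-M}: each $G_t^{(i)}$ lies in the first Wiener chaos while $(\textbf{Z}^n_t)_{i,j}$ is built from second-chaos objects, so the two are orthogonal and $\mathbb{E}[G_s^{(k)}(\textbf{Z}^n_t)_{i,j}]=0$; Theorem~6.2.3 of \cite{NP} then gives joint convergence of $((G_t^{(i)})_i,\textbf{Z}^n_t)$ to $((G_t^{(i)})_i,\textbf{D}^{(\beta)\,\frac{1}{2}}B_t)$ with $B$ a $p^2$-dimensional Brownian motion independent of $(G^{(i)})_i$, and Proposition~2 of \cite{Aldo}---its condition $D''$ being checked via Bayes' formula together with this independence---turns convergence in distribution into mixing, hence stable, convergence. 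The one point requiring genuine care is that Assumption~\ref{A3} does control the contraction $f_n\otimes_1 f_n$: since the new normalised increments differ from the old ones only through the bounded deterministic factors $\tau_n^{(j)}/\tau_n^{(\beta(j))}$, the sequences $r_{i,j}^{(\beta),(n)}(k)$ inherit the $\ell^2$-summability that makes the fourth-moment criterion on $\mathcal{W}_{2}$ applicable, and no new phenomenon appears---in particular, if some ratio $\tau_n^{(j)}/\tau_n^{(\beta(j))}\to 0$, the corresponding coordinate of the limit is simply degenerate, consistently with the stated formula for $\textbf{D}^{(\beta)}$. The case $\beta=1$ recovers Theorem~\ref{CLT-M}.
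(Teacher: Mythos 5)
Your proposal is correct and follows essentially the same route as the paper: the authors likewise prove Theorem \ref{2-CLT-M} by rerunning Proposition \ref{C-M}, Proposition \ref{tight-M} and Theorem \ref{CLT-M} with $\tau_n^{(\beta(j))}$ in place of $\tau_n^{(j)}$, observing that the arguments never use the normalisation $\mathbb{E}[(\Delta_i^nG^{(j)}/\tau_n^{(j)})^2]=1$ but only that the scalings are deterministic positive constants and that Assumption \ref{A3} supplies the required $\ell^2$-summability. Your closing observations (possible degeneracy of components of $\textbf{D}^{(\beta)}$ when $\tau_n^{(j)}/\tau_n^{(\beta(j))}\to 0$, and recovery of Theorem \ref{CLT-M} when the scalings coincide) match the paper's accompanying remark.
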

\begin{proof}
It follows from the same arguments as in the proofs of Proposition $\ref{C-M}$, Proposition \ref{tight-M}, and Theorem \ref{CLT-M}. This is because the only difference is that we have a greater denominator than before (\textit{i.e}.~$\tau_{n}^{(\beta(j))}\geq \tau_{n}^{(j)}$) which changes neither the logic of the arguments nor the computations. This is because in our framework we do not need that $\mathbb{E}\left[\left( \frac{\Delta_{i}^{n}G^{(k)}}{\tau_{n}^{(k)}}\right)^{2} \right]=1$. This was different for the previous literature where the equality was needed in order to use Theorem 2.7.7 together with Theorem 2.7.8 of \cite{NP}.
\end{proof}
\begin{rem}
Notice that while the larger value of the $\tau_{n}^{(\beta(j))}$ does not trigger any modification in the proof of the results, it may reduce some of the components of $\textbf{D}^{(\beta)}$ to zero. Indeed, the ideal situation would be the one where, for some $j$, $\tau_{n}^{(\beta(j))}= \tau_{n}^{(j)}$ and, for others, $\tau_{n}^{(\beta(j))}>\tau_{n}^{(j)}$.
\end{rem}
\begin{exmp}\label{Example1}
Let in this example $\{\textbf{G}_{t}\}_{t\in[0,T]}$ be a Gaussian core. Consider a partition of the set $\{1,...,p\}$ and call its elements $I_{\alpha_{1}},...,I_{\alpha_{v}}$ for some $v\in\mathbb{N}$. Hence, $I_{\alpha_{h}}\subset\{1,...,p\}$ for $h=1,...,v$, and $I_{\alpha_{h}}\cap I_{\alpha_{l}}=\emptyset$ for $h,l=1,...,v$ with $h\neq l$. For $h=1,..,v$, define 
\begin{equation}\label{tau-secondcase}
\tau_{n}^{(\alpha_{h})}:=\sqrt{\mathbb{E}\left[\left(\sum_{i\in I_{\alpha_{h}}}\Delta^{n}_{1}G^{(i)} \right)^{2}\right]},
\end{equation}
 and consider $\frac{\Delta^{n}_{l}G^{(j)}}{\tau_{n}^{(\beta(j))}}$ where $\beta(j):=\alpha_{h}$ when $j\in I_{\alpha_{h}}$. In addition, assume that, for $h=1,...,v$, the $\mathcal{F}_{t}$-Brownian measures $W^{(i)}$ are \textit{independent} for $i\in I_{\alpha_{h}}$. This means that $\mathbb{E}[W^{(i)}W^{(j)}]=0$ if $i,j\in I_{\alpha_{h}}$ for some $h=1,...,v$. Then $\tau_{n}^{(\beta(j))}\geq\tau_{n}^{(j)}$ for $j=1,..,p$, where $\tau_{n}^{(j)}$ has been defined in $(\ref{tau})$ and, thus, we can apply Theorem \ref{2-CLT-M}.
\end{exmp}
\section{Joint CLT for the multivariate BSS process}\label{CLT-BSS-chapter}
In this chapter we will present and prove our main results consisting in the joint central limit theorem for the two types of multivariate BSS processes.
We will present first the CLT for the scaling factor $\tau$ used in the literature (\textit{i.e}~Case I) and then the CLT for the new formulation (\textit{i.e}.~Case II). For Case II we have two scenarios depending on which multivariate extension of the univariate BSS process we consider (see Definition \ref{BSS}). In this and in the next two chapters we will use a multivariate version of the continuous mapping theorem applied to stable convergence (for reference see \cite{Aldo}). Moreover, we will adopt the following three assumptions.
\begin{ass}\label{1}
	For $m,l=1,...,p$, let $g^{(m,l)}$ be continuously differentiable with derivative $(g^{(m,l)})'\in L^{2}((b^{(m,l)},\infty))$ and $(g^{(m,l)})'$ is non-increasing in $[b^{(m,l)},\infty)$ for some $b^{(m,l)}>0$. Let $\sigma^{(m,l)}$ has $\alpha^{(m,l)}$-H\"{o}lder continuous sample paths, for $\alpha^{(m,l)}\in\left(\frac{1}{2},1\right)$. Define
\begin{equation*}
\pi_{n}^{(m,l)}(A):=\frac{\int_{A}\left(g^{(m,l)}(s+\Delta_{n})-g^{(m,l)}(s)\right)^{2}ds}{\int_{0}^{\infty}\left(g^{(m,l)}(s+\Delta_{n})-g^{(m,l)}(s)\right)^{2}ds},\quad\text{ where }\quad A\in\mathcal{B}(\mathbb{R}).
\end{equation*}
We impose that there exists a constant $\lambda<-1$ such that for any $\epsilon_{n}=O\left(n^{-\kappa}\right)$, $\kappa\in(0,1)$, we have
\begin{equation*}
\pi_{n}^{(m,l)}((\epsilon_{n},\infty))=O(n^{\lambda(1-\kappa)}).
\end{equation*}
\end{ass}
In the next sections we are going to present different cases. Each case will have particular $\tau_{n}$ and $r^{(n)}$, but the underlying assumptions will have the same structure. Hence, for the last two assumptions we are going to use the variables $\tau_{n}$ and $r^{(n)}$, whose specific form will not be introduced here, but instead it will be specified in the context where the assumptions are used. In other words, we preferred to have a general form for these two assumptions (with an unspecified $\tau_{n}$ and $r^{(n)}$) in order to avoid repeating the same assumptions with different $\tau_{n}$ and $r^{(n)}$ for each case.
\begin{ass}\label{2}Let $\lim\limits_{n\rightarrow\infty}\sum_{h=1}^{\infty}\left(r^{(n)}(h)\right)^{2}<\infty$.
\end{ass}
\begin{ass}\label{3}
Let $k,l,m=1,...,p$. The quantity
\begin{equation*}
\frac{\sqrt{\mathbb{E}\left[\left(\int_{-\infty}^{(i-1)\Delta_{n}} \Delta^{n}_{i}g^{(k,l)}(s)\sigma_{s}^{(l,m)}dW^{(m)}_{s}\right)^{2}\right]}}{\tau_{n}}=\frac{\sqrt{\int_{0}^{\infty} \left(g^{(k,l)}(s+\Delta_{n})-g^{(k,l)}(s)\right)^{2}\mathbb{E}\left[\left(\sigma_{(i-1)\Delta_{n}-s}^{(l,m)}\right)^{2}\right]ds}}{\tau_{n}}
\end{equation*}
is uniformly bounded in $n\in\mathbb{N}$ and $i\in\{1,...,n\}$.
\end{ass}
\noindent  In many situations this last assumption is satisfied, for example when the stochastic volatilities are second order stationary. Furthermore, using the computations of Lemma 1 of \cite{BCD}, it is possible to observe that the above assumption is equivalent to assuming that
\begin{equation*}
\int_{1}^{\infty} \left(g^{(k,l)}(s+\Delta_{n})-g^{(k,l)}(s)\right)^{2}\mathbb{E}\left[\left(\sigma_{(i-1)\Delta_{n}-s}^{(l,m)}\right)^{2}\right]ds<\infty.
\end{equation*}
This assumption is less restrictive than Assumption (4.4) of \cite{BCD} but it is sufficient for their results (see the proofs of Lemma 1 and Lemma 2 in \cite{BCD}). Moreover, since the assumptions of this work and of \cite{BCD} are similar, it is suggested to see Section 4.3 \cite{BCD} for a more detailed discussion of the assumptions.
\subsection{Case I}
\subsubsection{The bivariate case}\label{The joint bivariate case}
We will start with the bivariate case in order to simplify the exposition. The arguments for higher dimensions are the same. Consider the stochastic process $\{\textbf{Y}_{t}\}_{t\in[0,T]}$ defined as
\begin{equation*}
\textbf{Y}_{t}:=\begin{pmatrix}
     Y^{(1)}_{t}   \\
     Y^{(2)}_{t} 
    \end{pmatrix}=\int_{-\infty}^{t}\begin{pmatrix}
   g^{(1,1)}(t-s) & g^{(1,2)}(t-s)  \\
   g^{(2,1)}(t-s) & g^{(2,2)}(t-s)
  \end{pmatrix} \begin{pmatrix}
     \sigma^{(1,1)}_{s} & \sigma^{(1,2)}_{s}  \\
     \sigma^{(2,1)}_{s} & \sigma^{(2,2)}_{s}
    \end{pmatrix} 
  \begin{pmatrix}
     dW^{(1)}_{s}   \\
     dW^{(2)}_{s} 
    \end{pmatrix}+ \begin{pmatrix}
             U^{(1)}_{t}   \\
             U^{(2)}_{t} 
            \end{pmatrix},
\end{equation*}
where $g^{(i,j)}(\cdot), i,j=1,2$ are deterministic functions and $W^{(1)},W^{(2)}$ are two (possibly dependent) $\mathcal{F}_{t}$-Brownian measures on $\mathbb{R}$. From a modelling point of view the dependency of the Brownian measures is not very important since it is always possible to shift it from the Brownian measure to the stochastic volatilities by just rewriting the latter. We have
\begin{equation*}
Y^{(1)}_{t}=\int_{-\infty}^{t}g^{(1,1)}(t-s)\sigma^{(1,1)}_{s}+g^{(1,2)}(t-s)\sigma^{(2,1)}_{s}dW^{(1)}_{s}
\end{equation*}
\begin{equation*}
+\int_{-\infty}^{t}g^{(1,1)}(t-s)\sigma^{(1,2)}_{s}+g^{(1,2)}(t-s)\sigma^{(2,2)}_{s}dW^{(2)}_{s}+U_{t}^{(1)},\quad\text{and}
\end{equation*}
\begin{equation*}
Y^{(2)}_{t}=\int_{-\infty}^{t}g^{(2,1)}(t-s)\sigma^{(1,1)}_{s}+g^{(2,2)}(t-s)\sigma^{(2,1)}_{s}dW^{(1)}_{s}
\end{equation*}
\begin{equation*}
+\int_{-\infty}^{t}g^{(2,1)}(t-s)\sigma^{(1,2)}_{s}+g^{(2,2)}(t-s)\sigma^{(2,2)}_{s}dW^{(2)}_{s}+U_{t}^{(2)}.
\end{equation*}
Let us define, for $k,r,m=1,2$,
\begin{equation*}
\Delta_{i}^{n}Z^{(k,r,m)}:=\int_{(i-1)\Delta_{n}}^{i\Delta_{n}}g^{(k,r)}(i\Delta_{n}-s)\sigma^{(r,m)}_{s}dW^{(m)}_{s}+\int_{-\infty}^{(i-1)\Delta_{n}}\Delta^{n}_{i} g^{(k,r)}(s)\sigma^{(r,m)}_{s}dW^{(m)}_{s},
\end{equation*}
where we recall that $\Delta^{n}_{i} g^{(k,l)}(s)=g^{(k,l)}(i\Delta_{n}-s)-g^{(k,l)}((i-1)\Delta_{n}-s)$.
Then, we have
\begin{equation*}
\Delta_{i}^{n}Y^{(k)}=\Delta_{i}^{n}Z^{(k,1,1)}+\Delta_{i}^{n}Z^{(k,2,1)}+\Delta_{i}^{n}Z^{(k,1,2)}+\Delta_{i}^{n}Z^{(k,2,2)}+\Delta_{i}^{n}U^{(k)}.
\end{equation*}
For $m,r,k=1,2$, let $G^{(k,r;m)}_{t}:=\int_{0}^{t}g^{(k,r)}(t-s)dW_{s}^{(m)}$.
Hence,
\begin{equation*}
\Delta^{n}_{i}G^{(k,r;m)}=\int_{(i-1)\Delta_{n}}^{i\Delta_{n}}g^{(k,r)}(i\Delta_{n}-s)dW^{(m)}+\int_{-\infty}^{(i-1)\Delta_{n}}\Delta^{n}_{i} g^{(k,r)}(s)dW^{(m)}.
\end{equation*}
Further, let $\tau_{n}^{(k,r)}:=\sqrt{\mathbb{E}\left[\left(\Delta^{n}_{1}G^{(k,r;m)}\right)^{2}\right]}\quad\text{and}\quad r_{k,r,m;l,q,w}^{(n)}(h):=\mathbb{E}\left[\dfrac{\Delta^{n}_{1}G^{(k,r;m)}}{\tau_{n}^{(k,r)}}\dfrac{\Delta^{n}_{1+h}G^{(l,q;w)}}{\tau_{n}^{(l,q)}} \right]$.

The Gaussian core $\{\textbf{G}_{t}\}_{t\in[0,T]}$ is here given by $\textbf{G}_{t}=\left(G^{(k,r;m)}_{t} \right)_{m,k,r=1,2}$, and it is a Gaussian process since $W^{(1)}$ and $W^{(2)}$ are jointly Gaussian. Notice that we are working with the separable Hilbert space $\mathcal{H}$ generated by the jointly Gaussian random variables $\left(\frac{\Delta^{n}_{1}G^{(k,r;m)}}{\tau_{n}^{(k,r)}}\right)_{n\geq1,1\leq l\leq \lfloor nt\rfloor,k,r,m\in\{1,2\}}$. Further, observe that $\mathbb{E}\left[\dfrac{\Delta^{n}_{1}G^{(k,r;m)}}{\tau_{n}^{(k,r)}}\dfrac{\Delta^{n}_{1+h}G^{(l,q;w)}}{\tau_{n}^{(l,q)}} \right]=\mathbb{E}\left[\dfrac{\Delta^{n}_{i}G^{(k,r;m)}}{\tau_{n}^{(k,r)}}\dfrac{\Delta^{n}_{i}G^{(l,q;w)}}{\tau_{n}^{(l,q)}} \right]$ for any $i=1,...,\lfloor nt \rfloor$. Before presenting the CLT, we introduce an assumption, similar to assumption (4.10) in \cite{BCD}, that controls the asymptotic behaviour of the drift process.
\begin{ass}\label{AU1} Let 
$\frac{1}{\sqrt{n}}\sum_{i=1}^{\lfloor nt\rfloor}\frac{\Delta_{i}^{n}Z^{(k,r,m)}}{\tau^{(k,r)}_{n}}\Delta_{i}^{n}U^{(l)}\stackrel{u.c.p.}{\rightarrow}0$ and $\frac{1}{\sqrt{n}}\sum_{i=1}^{\lfloor nt\rfloor}\Delta_{i}^{n}U^{(k)}\Delta_{i}^{n}U^{(l)}\stackrel{u.c.p.}{\rightarrow}0$ for any $k,l,r,m=1,...,p$.
\end{ass}
\begin{thm}\label{I}
Under Assumptions \ref{1}, \ref{2} and \ref{3} applied to $\tau_{n}^{(l,q)},r_{k,r;l,q}^{(n)}$ for $l,q,k,r=1,2$, and Assumption \ref{AU1}, we have the following stable convergence.
\begin{equation*}
\Bigg\{\sqrt{n}\Bigg[\frac{1}{n}\sum_{i=1}^{\lfloor nt\rfloor}\Bigg(\sum_{r,m=1}^{2}\frac{\Delta_{i}^{n}Z^{(k,r,m)}}{\tau^{(k,r)}_{n}}+\Delta_{i}^{n}U^{(k)}\Bigg)\Bigg(\sum_{q,w=1}^{2}\frac{\Delta_{i}^{n}Z^{(l,q,w)}}{\tau^{(l,q)}_{n}}+\Delta_{i}^{n}U^{(l)}\Bigg)
\end{equation*}
\begin{equation*}
-\sum_{r,m,q=1}^{2}\mathbb{E}\left[\frac{\Delta_{1}^{n}G^{(k,r,m)}}{\tau^{(k,r)}_{n}}\frac{\Delta_{1}^{n}G^{(l,q,m)}}{\tau^{(l,q)}_{n}}\right]\int_{0}^{t}\sigma^{(r,m)}_{s}\sigma^{(q,m)}_{s}ds\Bigg]_{k,l=1,...,p}\Bigg\}_{t\in[0,T]}\stackrel{st}{\rightarrow}\Bigg\{\int_{0}^{t}V_{s}\textit{\textbf{D}}^{1/2}dB_{s}\Bigg\}_{t\in[0,T]}
\end{equation*}
in $\mathcal{D}([0,T],\mathbb{R}^{4})$, where $\textit{\textbf{D}}$ and $V_{s}$ are introduced in the Appendix, and $B_{s}$ is a 64-dimensional Brownian motion.
\end{thm}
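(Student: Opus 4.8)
The plan is to follow the established route for limit theorems for BSS processes (\cite{BCD,Andrea1}), reducing the assertion in three stages to the Gaussian CLT of Theorem \ref{CLT-M} applied to the $8$-dimensional Gaussian core $(G^{(k,r;m)})_{k,r,m=1,2}$. \emph{Step 1 (drift removal and localization).} First I would expand the product in the statement; by Assumption \ref{AU1} the terms containing at least one factor $\Delta^n_i U^{(\cdot)}$ — namely $\frac{1}{\sqrt n}\sum_i\frac{\Delta^n_i Z^{(k,r,m)}}{\tau_n^{(k,r)}}\Delta^n_i U^{(l)}$ and $\frac{1}{\sqrt n}\sum_i\Delta^n_i U^{(k)}\Delta^n_i U^{(l)}$ — converge to $0$ $u.c.p.$ and hence do not affect stable convergence. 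A standard localization, stopping each $\sigma^{(m,l)}$ at a sequence of stopping times increasing to $+\infty$, then reduces the problem to the case in which all $\sigma^{(m,l)}$, and the constants appearing in Assumptions \ref{1}--\ref{3}, are uniformly bounded; this reduction preserves stable convergence. It remains to treat $\frac{1}{\sqrt n}\sum_i\big(\sum_{r,m}\frac{\Delta^n_i Z^{(k,r,m)}}{\tau_n^{(k,r)}}\big)\big(\sum_{q,w}\frac{\Delta^n_i Z^{(l,q,w)}}{\tau_n^{(l,q)}}\big)$ minus its centering.

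\emph{Step 2 (freezing the volatility).} This is the technical heart. Writing $\Delta^n_i Z^{(k,r,m)}=\sigma^{(r,m)}_{(i-1)\Delta_n}\Delta^n_i G^{(k,r;m)}+R^{(k,r,m)}_{n,i}$, where $R^{(k,r,m)}_{n,i}$ gathers the two stochastic integrals of $s\mapsto\sigma^{(r,m)}_s-\sigma^{(r,m)}_{(i-1)\Delta_n}$ against $W^{(m)}$, I would establish
\[
\frac{1}{\sqrt n}\sum_{i=1}^{\lfloor nt\rfloor}\left(\frac{\Delta^n_i Z^{(k,r,m)}}{\tau_n^{(k,r)}}\frac{\Delta^n_i Z^{(l,q,w)}}{\tau_n^{(l,q)}}-\sigma^{(r,m)}_{(i-1)\Delta_n}\sigma^{(q,w)}_{(i-1)\Delta_n}\frac{\Delta^n_i G^{(k,r;m)}}{\tau_n^{(k,r)}}\frac{\Delta^n_i G^{(l,q;w)}}{\tau_n^{(l,q)}}\right)\stackrel{u.c.p.}{\rightarrow}0 .
\]
Expanding the difference into the three $R$-cross/quadratic terms and applying Cauchy--Schwarz, this reduces to an $L^2$-estimate of $R^{(k,r,m)}_{n,i}/\tau_n^{(k,r)}$, split according to whether the integration variable lies in $[(i-1)\Delta_n,i\Delta_n]$, in the moderately recent past $((i-1)\Delta_n-\epsilon_n,(i-1)\Delta_n)$ with $\epsilon_n=O(n^{-\kappa})$, or further back: the $\alpha^{(r,m)}$-H\"older continuity (exponent $>1/2$) controls the first two regions, Assumption \ref{1} on $\pi_n^{(r,m)}$ together with Assumption \ref{3} controls the last, and for a suitable $\kappa\in(0,1)$ these bounds beat the $\sqrt n$ prefactor — this is precisely the mechanism of Lemmas 1 and 2 of \cite{BCD}, adapted componentwise. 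By the same estimates the $\mathcal{F}_{(i-1)\Delta_n}$-conditional expectation of the $Z$-product agrees, up to a negligible error, with $\sigma^{(r,m)}_{(i-1)\Delta_n}\sigma^{(q,w)}_{(i-1)\Delta_n}\,\mathbb{E}\big[\frac{\Delta^n_1 G^{(k,r;m)}}{\tau_n^{(k,r)}}\frac{\Delta^n_1 G^{(l,q;w)}}{\tau_n^{(l,q)}}\big]$, which by independence of $W^{(1)}$ and $W^{(2)}$ vanishes when $m\neq w$; a Riemann-sum argument (again using $\alpha>1/2$ to absorb the $\sqrt n$ scaling of the Riemann error) then turns $\frac1n\sum_i\sigma^{(r,m)}_{(i-1)\Delta_n}\sigma^{(q,m)}_{(i-1)\Delta_n}$ into $\int_0^t\sigma^{(r,m)}_s\sigma^{(q,m)}_s\,ds$, producing exactly the centering in the statement.

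\emph{Step 3 (weighted stable CLT).} After Steps 1--2 it suffices to show that
\[
\frac{1}{\sqrt n}\sum_{i=1}^{\lfloor nt\rfloor}\left(\sigma^{(r,m)}_{(i-1)\Delta_n}\sigma^{(q,w)}_{(i-1)\Delta_n}\,I_2\left(\frac{\Delta^n_i G^{(k,r;m)}}{\tau_n^{(k,r)}}\tilde{\otimes}\frac{\Delta^n_i G^{(l,q;w)}}{\tau_n^{(l,q)}}\right)\right)_{k,l,r,m,q,w}
\]
converges stably, and that summing over $r,m,q,w$ — a linear, hence continuous, map, to which the continuous mapping theorem for stable convergence applies — yields $\{\int_0^t V_s\textbf{D}^{1/2}dB_s\}_{t\in[0,T]}$ in $\mathcal{D}([0,T],\mathbb{R}^4)$. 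The unweighted second-chaos partial sums $\frac{1}{\sqrt n}\sum_i I_2(\cdots)$, indexed by the $8$-dimensional Gaussian core and with covariance structure governed by the $r^{(n)}_{k,r,m;l,q,w}$, converge stably — jointly with the core and with a limit independent of it — to $\textbf{D}^{1/2}B_t$ for a $64$-dimensional Brownian motion $B$ by Theorem \ref{CLT-M}, Assumption \ref{2} here playing the role of Assumption \ref{A2}. Approximating each weight $s\mapsto\sigma^{(r,m)}_s\sigma^{(q,w)}_s$ by c\`adl\`ag step functions constant on blocks, and using the asymptotic block-independence supplied by the covariance computations of Proposition \ref{C-M} together with the independence of the limit $B$ from the core, one upgrades the Gaussian CLT to the weighted stable limit $\int_0^t V_s\textbf{D}^{1/2}dB_s$ along the lines of Theorem 4.4 of \cite{Andrea1}; here $V_s$ is the $4\times 64$ matrix-valued process assembling the products $\sigma^{(r,m)}_s\sigma^{(q,w)}_s$ according to the index dictionary made precise in the Appendix.

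I expect Step 2 to be the main obstacle: the uniform-in-$i$ $L^2$-control of $R^{(k,r,m)}_{n,i}/\tau_n^{(k,r)}$, in particular of the contribution of $\int_{-\infty}^{(i-1)\Delta_n}\Delta^n_i g^{(k,r)}(s)\big(\sigma^{(r,m)}_s-\sigma^{(r,m)}_{(i-1)\Delta_n}\big)dW^{(m)}_s$, requires the full strength of Assumptions \ref{1} and \ref{3} and is where the restriction $\alpha^{(m,l)}\in(1/2,1)$ is genuinely used; by contrast, the multi-index bookkeeping (eight core components, a $64$-dimensional limiting Brownian motion, a $4\times 64$ matrix $V_s$) is lengthy but routine once the univariate-type estimates are in place.
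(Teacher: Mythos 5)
Your proposal is correct and follows essentially the same route as the paper: decompose into a negligible remainder (controlled by the H\"older continuity of $\sigma$, Assumptions \ref{1} and \ref{3}, exactly as in Lemmas 1--2 of \cite{BCD} and \cite{Andrea1}) plus a frozen-volatility main term, then apply the blocking technique together with the $64$-dimensional Gaussian CLT of Theorem \ref{CLT-M} and the properties of stable convergence, and finally remove the drift via Assumption \ref{AU1}. The only cosmetic difference is that you freeze $\sigma$ at $(i-1)\Delta_n$ and pass to blocks afterwards, whereas the paper freezes directly at the block level $(j-1)\Delta_p$; your sketch is in fact somewhat more explicit than the paper's about the $L^2$-estimates being delegated to \cite{BCD}.
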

\begin{proof}
Let us assume for now that the drift process $\textbf{U}_{t}=0$ for any $t\in[0,T]$. 
We can split our formulation in two components $A_{n}+C_{n}$, where $A_{n}$ contains the elements that go to zero, while $C_{n}$ contains the ones that do not converge to zero. In this proof we will use the so-called \textit{blocking technique}, see \cite{BCD}, \cite{CHPP} and \cite{Andrea1} for details. Let us first focus on $C_{n}$, which is defined as
\begin{equation*}
C_{n}:=\sqrt{n}\Bigg(\frac{1}{n}\sum_{j=1}^{\lfloor pt\rfloor}\sum_{i\in I_{p}(j)}\sum_{r,m,q,w=1}^{2}\Bigg(\frac{\Delta^{n}_{i}G^{(k,r;m)}}{\tau_{n}^{(k,r)}}\frac{\Delta^{n}_{i}G^{(l,q;w)}}{\tau_{n}^{(l,q)}}
\end{equation*}
\begin{equation*}
-\mathbb{E}\left[\frac{\Delta^{n}_{i}G^{(k,r;m)}}{\tau_{n}^{(k,r)}}\frac{\Delta^{n}_{i}G^{(l,q;w)}}{\tau_{n}^{(l,q)}}\right]\Bigg)\sigma^{(r,m)}_{(j-1)\Delta_{p}}\sigma^{(q,w)}_{(j-1)\Delta_{p}} \Bigg)_{k,l=1,2},
\end{equation*}
where $I_{p}(j)=\left\{ i\big|\frac{i}{n}\in\left(\frac{j-1}{p},\frac{j}{p}\right]\right\}$, which can be rewritten as
\begin{equation*}
C_{n}=\frac{1}{\sqrt{n}}\sum_{j=1}^{\lfloor pt\rfloor}\sum_{i\in I_{p}(j)}V_{(j-1)\Delta_{p}}\left(\frac{\Delta^{n}_{i}G^{(k,r;m)}}{\tau_{n}^{(k,r)}}\frac{\Delta^{n}_{i}G^{(l,q;w)}}{\tau_{n}^{(l,q)}}-\mathbb{E}\left[\frac{\Delta^{n}_{i}G^{(k,r;m)}}{\tau_{n}^{(k,r)}}\frac{\Delta^{n}_{i}G^{(l,q;w)}}{\tau_{n}^{(l,q)}}\right]\right)_{r,m,q,w,k,l=1,2},
\end{equation*}
where 
\begin{equation*}
V_{(j-1)\Delta_{p}}=\begin{pmatrix}
   \sigma_{(j-1)\Delta_{p}} & \textbf{0} & \textbf{0} & \textbf{0} \\ \textbf{0} &
   \sigma_{(j-1)\Delta_{p}} & \textbf{0} & \textbf{0} \\  \textbf{0} & \textbf{0} & \sigma_{(j-1)\Delta_{p}} & \textbf{0} \\ \textbf{0} & \textbf{0} & \textbf{0} & \sigma_{(j-1)\Delta_{p}}

  \end{pmatrix},
\end{equation*}
where $\sigma_{(j-1)\Delta_{p}}=\left(\sigma^{(r,m)}_{(j-1)\Delta_{p}}\sigma^{(q,w)}_{(j-1)\Delta_{p}}\right)_{r,m,q,w=1,2}^{\top}$ (hence it is a row vector of 16 elements), and $\textbf{0}$ is a row vector of 16 elements containing only zeros. Hence, $V_{(j-1)\Delta_{p}}$ is a $4\times 64$ matrix.
Now, by Theorem \ref{CLT-M}, we have that 
\begin{equation*}
\left\{\frac{1}{\sqrt{n}}\sum_{i=1}^{\lfloor nt\rfloor}\left(\frac{\Delta^{n}_{i}G^{(k,r;m)}}{\tau_{n}^{(k,r)}}\frac{\Delta^{n}_{i}G^{(l,q;w)}}{\tau_{n}^{(l,q)}}-\mathbb{E}\left[\frac{\Delta^{n}_{i}G^{(k,r;m)}}{\tau_{n}^{(k,r)}}\frac{\Delta^{n}_{i}G^{(l,q;w)}}{\tau_{n}^{(l,q)}}\right]\right)_{r,m,q,w,k,l=1,2}\right\}_{t\in[0,T]}
\end{equation*}
\begin{equation*}
\stackrel{st}{\rightarrow}\left\{\textit{\textbf{D}}^{1/2}B_{t}\right\}_{t\in[0,T]},\quad\text{as}\quad n\rightarrow\infty
\end{equation*}
in $\mathcal{D}([0,T],\mathbb{R}^{64})$, where the symmetric matrix $\textit{\textbf{D}}^{1/2}$ is a $64\times64$ matrix and $B_{t}$ is a $64$-dimensional Brownian motion. In particular, in order to define the elements of the matrix $\textit{\textbf{D}}$ we proceed with the following association of $(z,y)$ to $((r_{z},m_{z},q_{z},w_{z},k_{z},l_{z}),(r_{y},m_{y},q_{y},w_{y},k_{y},l_{y}))$. Let $\nu(r,m,q,w)$ be the set of all the possible combinations of $r,m,q,w\in\{1,2\}$ and let $\nu_{s}(r,m,q,w)$ determines the $s$ element of $\nu(r,m,q,w)$. It is possible to see that $\nu(r,m,q,w)$ contains $2^{4}$ elements, hence $s\in\{1,...,2^{4}\}$. Now, we impose the following: for $z=y=s\leftrightarrow (\nu_{s}(r,m,q,w),1,1)$; $z=y=2^{4}+1\leftrightarrow (\nu_{1}(r,m,q,w),1,2)$;...; $z=y=2^{4}+s\leftrightarrow (\nu_{s}(r,m,q,w),1,2)$;...; $z=y=2^{5}+1\leftrightarrow (\nu_{1}(r,m,q,w),2,1)$;...; $z=y=2^{5}+s\leftrightarrow (\nu_{s}(r,m,q,w),2,1)$;...; $z=y=2^{5}+2^{4}+1\leftrightarrow (\nu_{1}(r,m,q,w),2,2)$;...; $z=y=2^{5}+2^{4}+s\leftrightarrow (\nu_{s}(r,m,q,w),2,2)$;...; $z=y=2^{6}\leftrightarrow (\nu_{2^{4}}(r,m,q,w),2,2)$. This can be written compactly as 
\begin{equation*}
(z,y)\leftrightarrow\Bigg(\Big(\nu_{z-\lfloor\frac{z-1}{2^{4}}\rfloor 2^{4}}(r,m,q,w),\lfloor\frac{\lfloor\frac{z-1}{2^{4}}\rfloor}{2}\rfloor+1,\lfloor\frac{z-1}{2^{4}}\rfloor+1-2\lfloor\frac{\lfloor\frac{z-1}{2^{4}}\rfloor}{2}\rfloor\Big)
\end{equation*}
\begin{equation*}
,\Big(\nu_{y-\lfloor\frac{y-1}{2^{4}}\rfloor 2^{4}}(r,m,q,w),\lfloor\frac{\lfloor\frac{y-1}{2^{4}}\rfloor}{2}\rfloor+1,\lfloor\frac{y-1}{2^{4}}\rfloor+1-2\lfloor\frac{\lfloor\frac{y-1}{2^{4}}\rfloor}{2}\rfloor\Big)\Bigg).
\end{equation*}
Moreover, here it becomes evident why we need Theorem 2.6 to hold for jointly Gaussian Brownian measures and not just for independent ones: this is because the vector of Brownian measures is not composed of independent Brownian measures but rather of the same Brownian measures recurring repeatedly .

Continuing with the proof, we observe that by using the properties of the stable convergence and the assumption on the $\mathcal{F}$-measurability of the $\sigma$s, we obtain for fixed $p$
\begin{equation*}
C_{n}\stackrel{st}{\rightarrow}\left\{\sum_{j=1}^{\lfloor pt\rfloor}V_{(j-1)\Delta_{p}}\textit{\textbf{D}}^{1/2}\left(B_{j\Delta_{p}}-B_{(j-1)\Delta_{p}}\right)\right\}_{t\in[0,T]},\quad\text{as}\quad n\rightarrow\infty
\end{equation*}
in $\mathcal{D}([0,T],\mathbb{R}^{4})$ where the dimensions are $4\times64$, $64\times64$ and $64\times 1$, respectively. The convergence in this space is implied by the convergence in $\mathcal{D}([0,T],\mathbb{R}^{64})$. In addition, since the stochastic volatilities are c\`{a}dl\`{a}g we have
\begin{equation*}
\sum_{j=1}^{\lfloor pt\rfloor}V_{(j-1)\Delta_{p}}\textit{\textbf{D}}^{1/2}\left(B_{j\Delta_{p}}-B_{(j-1)\Delta_{p}}\right)\stackrel{P}{\rightarrow}\int_{0}^{t}V_{s}\textit{\textbf{D}}^{1/2}dB_{s},
\end{equation*}
as $p\rightarrow\infty$. From this we obtain the stable convergence of $C_{n}$.

Concerning $A_{(l,k),n}$ we have the same arguments of Theorem \cite{Andrea1} and Theorem 4 of \cite{BCD}. This is because we can focus on the single elements
\begin{equation*}
\sqrt{n}\Bigg[\frac{1}{n}\sum_{i=1}^{\lfloor nt\rfloor}\frac{\Delta_{i}^{n}Z^{(k,r,m)}}{\tau^{(k,r)}_{n}}\frac{\Delta_{i}^{n}Z^{(l,q,w)}}{\tau^{(l,1)}_{n}}-\mathbb{E}\left[\frac{\Delta^{n}_{i}G^{(k,r,m)}}{\tau_{n}^{(k,r)}}\frac{\Delta^{n}_{i}G^{(l,q,w)}}{\tau_{n}^{(l,q)}} \right]\int_{0}^{t}\sigma^{(r,m)}_{s}\sigma^{(q,w)}_{s}ds\Bigg]
\end{equation*}
and directly apply their arguments using the assumptions of this theorem. Notice that when $m\neq w$, then we have $\mathbb{E}\left[\frac{\Delta^{n}_{i}G^{(k,r,m)}}{\tau_{n}^{(k,r)}}\frac{\Delta^{n}_{i}G^{(l,q,w)}}{\tau_{n}^{(l,q)}} \right]=0$. 
In particular, for each $(l,k)$ we have that $A_{(l,k),n}$ converges to zero in distribution in $\mathcal{D}([0,T],\mathbb{R}^{4})$, which implies that they converge jointly to zero stably in distribution. Now, since $C_{n}$ converges stably and $A_{n}$ converges stably to zero we have that they jointly converge stably. This concludes the proof for the case $\textbf{U}_{t}=\textbf{0}$, where $\textbf{0}$ is a vector of zeros.

Now consider $\textbf{U}_{t}\neq\textbf{0}$. In order to get the stated result we need to prove that the following elements converge in u.c.p. to zero, so that the stable convergence obtained so far in this proof remains the same. These elements are
\begin{equation*}
\sqrt{n}\Bigg[\frac{1}{n}\sum_{i=1}^{\lfloor nt\rfloor}\left(\frac{\Delta_{i}^{n}Z^{(k,1,1)}}{\tau^{(k,1)}_{n}}+\frac{\Delta_{i}^{n}Z^{(k,2,1)}}{\tau^{(k,2)}_{n}}+\frac{\Delta_{i}^{n}Z^{(k,1,2)}}{\tau^{(k,1)}_{n}}+\frac{\Delta_{i}^{n}Z^{(k,2,2)}}{\tau^{(k,2)}_{n}}\right)\Delta_{i}^{n}U^{(l)}
\end{equation*}
\begin{equation*}
+\left(\frac{\Delta_{i}^{n}Z^{(l,1,1)}}{\tau^{(l,1)}_{n}}+\frac{\Delta_{i}^{n}Z^{(l,2,1)}}{\tau^{(l,2)}_{n}}+\frac{\Delta_{i}^{n}Z^{(l,1,2)}}{\tau^{(l,1)}_{n}}+\frac{\Delta_{i}^{n}Z^{(l,2,2)}}{\tau^{(l,2)}_{n}}\right)\Delta_{i}^{n}U^{(k)}+\Delta_{i}^{n}U^{(k)}\Delta_{i}^{n}U^{(l)}\Bigg]_{k,l=1,2}.
\end{equation*}
Thanks to Assumption \ref{AU1} (with $p=2$) they go to zero in u.c.p. component wise (\textit{i.e}.~for fixed $k,l$) and, hence, jointly. Thus, using the properties of the stable convergence the proof is complete.
\end{proof}
\subsubsection{The multidimensional case and the \textit{vech} formulation}
It is possible to obtain a multidimensional version of Theorem \ref{I}. The reason why we presented the bivariate case first is because in this way we simplified the notations and, hence, facilitated the understanding of the arguments, which are the same for the multivariate case.
\begin{thm}\label{I-Multi}
Under Assumptions \ref{1}, \ref{2} and \ref{3} applied to $\tau_{n}^{(l,q)},r_{k,r;l,q}^{(n)}$ for $l,q,k,r=1,...,p$, and Assumption \ref{AU1}, we have the following stable convergence.
\begin{equation*}
\Bigg\{\sqrt{n}\Bigg[\frac{1}{n}\sum_{i=1}^{\lfloor nt\rfloor}\Bigg(\sum_{r,m=1}^{p}\frac{\Delta_{i}^{n}Z^{(k,r,m)}}{\tau^{(k,r)}_{n}}+\Delta_{i}^{n}U^{(k)}\Bigg)\Bigg(\sum_{q,w=1}^{p}\frac{\Delta_{i}^{n}Z^{(l,q,w)}}{\tau^{(l,q)}_{n}}+\Delta_{i}^{n}U^{(l)}\Bigg)
\end{equation*}
\begin{equation*}
-\sum_{r,m,q=1}^{p}\mathbb{E}\left[\frac{\Delta_{1}^{n}G^{(k,r,m)}}{\tau^{(k,r)}_{n}}\frac{\Delta_{1}^{n}G^{(l,q,m)}}{\tau^{(l,q)}_{n}}\right]\int_{0}^{t}\sigma^{(r,m)}_{s}\sigma^{(q,m)}_{s}ds\Bigg]_{k,l=1,...,p}\Bigg\}_{t\in[0,T]}\stackrel{st}{\rightarrow}\Bigg\{\int_{0}^{t}V_{s}\textit{\textbf{D}}^{1/2}dB_{s}\Bigg\}_{t\in[0,T]}
\end{equation*}
in $\mathcal{D}([0,T],\mathbb{R}^{p^{2}})$, where $\textit{\textbf{D}}$ and $V_{s}$ are introduced in the Appendix, and $B_{s}$ is a $p^{6}$-dimensional Brownian motion.
\end{thm}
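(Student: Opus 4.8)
The plan is to follow \emph{verbatim} the structure of the proof of Theorem \ref{I}: the only change is that the summation indices $r,m,q,w$ now range over $\{1,\dots,p\}$ instead of $\{1,2\}$, so that the auxiliary Gaussian vector has $p^{6}$ entries and the limiting Brownian motion is $p^{6}$-dimensional, while $V_{s}$ becomes a $p^{2}\times p^{6}$ matrix and $\mathbf{D}$ a $p^{6}\times p^{6}$ matrix. First I would reduce to the driftless case $\mathbf{U}_{t}\equiv\mathbf{0}$, postponing the drift to the end. In that case I would split the normalised statistic as $A_{n}+C_{n}$, where $C_{n}$ is the \emph{blocked} term obtained by freezing each $\sigma^{(r,m)}_{s}$ at the left endpoint $(j-1)\Delta_{p}$ of a coarse grid of mesh $1/p$ (the \emph{blocking technique} of \cite{BCD}, \cite{Andrea1}), and $A_{n}$ is the remainder.

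For $C_{n}$ I would rewrite it, exactly as in the bivariate case, as $C_{n}=\tfrac{1}{\sqrt{n}}\sum_{j=1}^{\lfloor pt\rfloor}\sum_{i\in I_{p}(j)}V_{(j-1)\Delta_{p}}\big(\tfrac{\Delta^{n}_{i}G^{(k,r;m)}}{\tau_{n}^{(k,r)}}\tfrac{\Delta^{n}_{i}G^{(l,q;w)}}{\tau_{n}^{(l,q)}}-\mathbb{E}[\,\cdot\,]\big)_{r,m,q,w,k,l=1,\dots,p}$, with $V_{(j-1)\Delta_{p}}$ the $p^{2}\times p^{6}$ block-diagonal matrix whose diagonal blocks are the length-$p^{4}$ row vectors $(\sigma^{(r,m)}_{(j-1)\Delta_{p}}\sigma^{(q,w)}_{(j-1)\Delta_{p}})_{r,m,q,w}$. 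Under Assumptions \ref{1}, \ref{2} and \ref{3} (applied to $\tau_{n}^{(l,q)}$ and $r^{(n)}_{k,r;l,q}$), Theorem \ref{CLT-M} gives the stable convergence of the $p^{6}$-dimensional vector of centred products to $\mathbf{D}^{1/2}B_{t}$, with $\mathbf{D}$ identified through the obvious multi-index correspondence generalising the one displayed in the proof of Theorem \ref{I}; this is precisely the point where one needs Theorem \ref{CLT-M} for \emph{jointly} (not necessarily independent) Gaussian Brownian measures, since the $p^{6}$-vector repeats the same measures many times. By stability of the convergence and $\mathcal{F}$-measurability of the $\sigma$'s (via the multivariate continuous-mapping theorem for stable convergence), for each fixed $p$ one obtains $C_{n}\stackrel{st}{\rightarrow}\sum_{j=1}^{\lfloor pt\rfloor}V_{(j-1)\Delta_{p}}\mathbf{D}^{1/2}(B_{j\Delta_{p}}-B_{(j-1)\Delta_{p}})$; letting $p\to\infty$ and using that the $\sigma$'s are c\`{a}dl\`{a}g, this Riemann-type sum converges in probability to $\int_{0}^{t}V_{s}\mathbf{D}^{1/2}dB_{s}$, which yields the stable convergence of $C_{n}$.

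For $A_{n}$ it suffices to treat each scalar entry
\[
\sqrt{n}\Big[\tfrac1n\sum_{i=1}^{\lfloor nt\rfloor}\tfrac{\Delta_{i}^{n}Z^{(k,r,m)}}{\tau^{(k,r)}_{n}}\tfrac{\Delta_{i}^{n}Z^{(l,q,w)}}{\tau^{(l,q)}_{n}}-\mathbb{E}\Big[\tfrac{\Delta^{n}_{i}G^{(k,r,m)}}{\tau_{n}^{(k,r)}}\tfrac{\Delta^{n}_{i}G^{(l,q,w)}}{\tau_{n}^{(l,q)}}\Big]\int_{0}^{t}\sigma^{(r,m)}_{s}\sigma^{(q,w)}_{s}\,ds\Big]
\]
and to show that it converges to zero in distribution in $\mathcal{D}([0,T],\mathbb{R})$; this follows from the estimates in Lemmas 1--2 and Theorem 4 of \cite{BCD} and the analogous steps of \cite{Andrea1}, invoking Assumptions \ref{1}, \ref{2}, \ref{3}, and using that the expectation factor vanishes whenever $m\neq w$. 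Componentwise convergence to zero in distribution gives joint convergence to zero stably, and combining this with the stable convergence of $C_{n}$ proves the theorem in the driftless case. To re-insert $\mathbf{U}$, I would expand the product $\big(\sum_{r,m}\tfrac{\Delta_{i}^{n}Z^{(k,r,m)}}{\tau^{(k,r)}_{n}}+\Delta_{i}^{n}U^{(k)}\big)\big(\sum_{q,w}\tfrac{\Delta_{i}^{n}Z^{(l,q,w)}}{\tau^{(l,q)}_{n}}+\Delta_{i}^{n}U^{(l)}\big)$: the extra terms are the mixed $Z\cdot U$ and the $U\cdot U$ sums, which vanish in u.c.p.\ componentwise, hence jointly, by Assumption \ref{AU1}; since adding a term converging to zero in u.c.p.\ does not alter stable convergence, the proof is complete. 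The main obstacle is organisational rather than analytic: one must set up correctly the multi-index bookkeeping for the $p^{6}\times p^{6}$ matrix $\mathbf{D}$ and the $p^{2}\times p^{6}$ matrix $V_{s}$, and verify with some care that the vanishing estimates for $A_{n}$ borrowed from the univariate/bivariate references carry over word for word with $p$ arbitrary — there is no new probabilistic phenomenon to handle, only a proliferation of indices.
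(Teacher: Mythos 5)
Your proposal is correct and follows exactly the route the paper intends: the paper's own proof of this theorem is a one-line reference to the proof of Theorem \ref{I}, and what you have written is precisely that bivariate argument (blocking decomposition $A_{n}+C_{n}$, Theorem \ref{CLT-M} for the $p^{6}$-dimensional Gaussian vector, the Riemann-sum limit for $C_{n}$, the vanishing of $A_{n}$ via the estimates of \cite{BCD} and \cite{Andrea1}, and Assumption \ref{AU1} for the drift) carried over with indices ranging over $\{1,\dots,p\}$. No discrepancy to report.
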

\begin{proof}
It follows from the same arguments as the ones used in the proof of Theorem \ref{I}.
\end{proof}
It is possible to obtain a \textit{vech} formulation of our results, thus reducing their dimensions without losing any information. This is possible because of the symmetry of our object of study, that is there is no difference between $\frac{\Delta_{i}^{n}G^{(k,r,m)}}{\tau^{(k,r)}_{n}}\frac{\Delta_{i}^{n}G^{(l,q,w)}}{\tau^{(l,q)}_{n}}$ and $\frac{\Delta_{i}^{n}G^{(l,q,w)}}{\tau^{(l,q)}_{n}}\frac{\Delta_{i}^{n}G^{(k,r,m)}}{\tau^{(k,r)}_{n}}$ and between $\frac{\Delta_{i}^{n}Z^{(k,r,m)}}{\tau^{(k,r)}_{n}}\frac{\Delta_{i}^{n}Z^{(l,q,w)}}{\tau^{(l,q)}_{n}}$ and $\frac{\Delta_{i}^{n}Z^{(l,q,w)}}{\tau^{(l,q)}_{n}}\frac{\Delta_{i}^{n}Z^{(k,r,m)}}{\tau^{(k,r)}_{n}}$. Hence, we have the following formulation of our results.
\begin{co}\label{I-Vech}
Under Assumptions \ref{1}, \ref{2} and \ref{3} applied to $\tau_{n}^{(l,q)},r_{k,r;l,q}^{(n)}$ for $l,q,k,r=1,...,p$, and Assumption \ref{AU1}, we have the following stable convergence.
\begin{equation*}
\Bigg\{\sqrt{n}\Bigg[\frac{1}{n}\sum_{i=1}^{\lfloor nt\rfloor}\Bigg(\sum_{r,m=1}^{p}\frac{\Delta_{i}^{n}Z^{(k,r,m)}}{\tau^{(k,r)}_{n}}+\Delta_{i}^{n}U^{(k)}\Bigg)\Bigg(\sum_{q,w=1}^{p}\frac{\Delta_{i}^{n}Z^{(l,q,w)}}{\tau^{(l,q)}_{n}}+\Delta_{i}^{n}U^{(l)}\Bigg)
\end{equation*}
\begin{equation*}
-\mathbb{E}\left[\frac{\Delta_{1}^{n}G^{(k,r,m)}}{\tau^{(k,r)}_{n}}\frac{\Delta_{1}^{n}G^{(l,q,w)}}{\tau^{(l,q)}_{n}}\right]\int_{0}^{t}\sigma^{(r,m)}_{s}\sigma^{(q,w)}_{s}ds\Bigg]_{k=1,...,p;l\leq k}\Bigg\}_{t\in[0,T]}\stackrel{st}{\rightarrow}\Bigg\{\int_{0}^{t}V_{s}\textit{\textbf{D}}^{1/2}dB_{s}\Bigg\}_{t\in[0,T]}
\end{equation*}
in $\mathcal{D}([0,T],\mathbb{R}^{\frac{p}{2}(p+1)})$, where $\textit{\textbf{D}}$ and $V_{s}$ are introduced in the Appendix, and $B_{s}$ is a $\frac{p^{5}}{2}(p+1)$-dimensional Brownian motion.
\end{co}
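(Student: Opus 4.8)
The plan is to deduce Corollary \ref{I-Vech} from Theorem \ref{I-Multi} by exploiting the symmetry of the realised covariation matrix together with the continuous mapping theorem for stable convergence. First I would observe that, for every fixed $n$ and $t$, the $(k,l)$-th and $(l,k)$-th components of the $p^{2}$-dimensional vector appearing in Theorem \ref{I-Multi} coincide pathwise: each is a finite sum of products of the real-valued quantities $\Delta_{i}^{n}Z^{(\cdot)}/\tau_{n}^{(\cdot)}$ and $\Delta_{i}^{n}U^{(\cdot)}$ (respectively of expectations $\mathbb{E}[(\Delta_{1}^{n}G^{(\cdot)}/\tau_{n}^{(\cdot)})(\Delta_{1}^{n}G^{(\cdot)}/\tau_{n}^{(\cdot)})]$ multiplied by $\int_{0}^{t}\sigma_{s}^{(\cdot)}\sigma_{s}^{(\cdot)}ds$), and scalar multiplication is commutative. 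Consequently the limiting process $\{\int_{0}^{t}V_{s}\textbf{D}^{1/2}dB_{s}\}_{t\in[0,T]}$ of Theorem \ref{I-Multi} also has its $(k,l)$-th and $(l,k)$-th components almost surely equal.

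Next I would introduce the linear projection $\mathrm{vech}:\mathbb{R}^{p^{2}}\to\mathbb{R}^{\frac{p}{2}(p+1)}$ that retains only the entries indexed by $(k,l)$ with $l\leq k$. This map is continuous (indeed linear), so by the continuous mapping theorem for stable convergence (Theorem \ref{Cotinuous mapping} together with the stable convergence statement recalled immediately afterwards, cf.~\cite{Aldo}) the image under $\mathrm{vech}$ of the sequence in Theorem \ref{I-Multi} converges stably in $\mathcal{D}([0,T],\mathbb{R}^{\frac{p}{2}(p+1)})$ to $\mathrm{vech}\big(\int_{0}^{t}V_{s}\textbf{D}^{1/2}dB_{s}\big)$; by the first paragraph, no information is lost in passing to this reduced vector.

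It then remains to rewrite the limit $\mathrm{vech}\big(\int_{0}^{t}V_{s}\textbf{D}^{1/2}dB_{s}\big)$ in the form $\int_{0}^{t}V_{s}\textbf{D}^{1/2}dB_{s}$ with the reduced-dimensional objects stated in the corollary. Since, for each $(r,m,q,w)$, the rows of $V_{s}$ indexed by $(k,l)$ and $(l,k)$ are identical, the $p^{6}$ driving coordinates of the original Brownian motion enter only through $\frac{p^{4}\cdot p(p+1)}{2}=\frac{p^{5}}{2}(p+1)$ linear combinations; collecting these yields a $\frac{p^{5}}{2}(p+1)$-dimensional Brownian motion $B_{s}$, still independent of $G^{(1)},\dots,G^{(p)}$, a reduced coefficient matrix $V_{s}$ and a reduced $\textbf{D}$, all of which are spelled out in the Appendix. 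The only genuinely non-routine point is this last bookkeeping step — verifying that the collapsed covariance structure is again that of a (scaled) Brownian motion and that the reduced $V_{s}$ and $\textbf{D}$ agree with the Appendix definitions — but this is elementary linear algebra and follows the same pattern as the index associations already carried out in the proofs of Proposition \ref{C-M} and Theorem \ref{I}.
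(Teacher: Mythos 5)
Your proposal is correct, but it takes a different (and arguably more economical) route than the paper. The paper's own proof of Corollary \ref{I-Vech} simply states that it ``follows from the same arguments as the ones used in the proof of Theorem \ref{I}'', i.e.\ it asks the reader to re-run the blocking-technique argument (decomposition into $A_n+C_n$, the Gaussian-core CLT of Theorem \ref{CLT-M}, and the drift estimates from Assumption \ref{AU1}) directly on the reduced index set $k=1,\dots,p$, $l\leq k$. You instead treat the corollary as a genuine corollary: you take the already-established $p^2$-dimensional convergence of Theorem \ref{I-Multi}, observe that the $(k,l)$ and $(l,k)$ components of both the prelimit and the limit coincide pathwise (commutativity of the products, plus relabelling $r\leftrightarrow q$ in the compensator), and push the result through the continuous linear \textnormal{vech} projection using the continuous mapping theorem for stable convergence. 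What your approach buys is that no probabilistic estimate has to be repeated; the only work left is the linear-algebra bookkeeping of rewriting the projected Gaussian limit $\mathrm{vech}\big(\int_0^t V_s\textit{\textbf{D}}^{1/2}dB_s\big)$ as a stochastic integral against a $\frac{p^5}{2}(p+1)$-dimensional Brownian motion with the reduced $V_s$ and $\textit{\textbf{D}}$ of the Appendix, which you correctly identify (matching conditional covariances of continuous Gaussian martingales) and whose dimension count $\frac{p(p+1)}{2}\cdot p^4=\frac{p^5}{2}(p+1)$ you verify. Both arguments are valid; yours makes the logical dependence on Theorem \ref{I-Multi} explicit rather than implicit.
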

\begin{proof}
It follows from the same arguments as the ones used in the proof of Theorem \ref{I}.
\end{proof}
\begin{rem}
Notice that $\frac{p^{5}}{2}(p+1)$ comes from $p^{4}(p^{2}-\sum_{j=1}^{p-1}j)$ where $\sum_{j=1}^{p-1}j$ indicates that we are not considering the strictly upper (or lower) triangular elements of the $p\times p$ matrix. Moreover, for the remaining sections and chapter we will always adopt the \textnormal{vech} version of our results.
\end{rem}
\subsection{Case II: first scenario}
Despite the process $\textbf{Y}_{t}$ being the same as in the previous section, we introduce a new formulation for the $\tau$. This formulation is in line with the one presented in Section \ref{A second case}. Furthermore, in this section we present and prove the results for one of the two versions of the multivariate BSS process introduced in Definition \ref{BSS}. In the next section we will do the same for the other version.

Consider the stochastic process $\{\textbf{Y}_{t}\}_{t\in[0,T]}=\{(Y_{t}^{(1)},...,Y_{t}^{(p)})\}_{t\in[0,T]}$ given by
\begin{equation}\label{Y-secondcase}
\textbf{Y}_{t}=\int_{-\infty}^{t} \begin{pmatrix}
             g^{(1,1)}(t-s)  &\cdots  & g^{(1,p)}(t-s) \\ \vdots &
             \ddots  & \vdots \\ g^{(p,1)}(t-s) & \cdots & g^{(p,p)}(t-s)       
            \end{pmatrix} 
            \begin{pmatrix}
                         \sigma^{(1,1)}_{s}  &\cdots  & \sigma^{(1,p)}_{s} \\ \vdots &
                         \ddots  & \vdots \\ \sigma^{(p,1)}_{s} & \cdots & \sigma^{(p,p)}_{s}       
                        \end{pmatrix} 
  \begin{pmatrix}
     dW^{(1)}_{s}   \\\vdots\\
     dW^{(p)}_{s}
    \end{pmatrix}+ \begin{pmatrix}
             U^{(1)}_{t}   \\\vdots\\
             U^{(p)}_{t} 
            \end{pmatrix}.
\end{equation}
Assume that the $\mathcal{F}_{t}$-Brownian measures are all \textit{independent} of each other. Let us define, for $k=1,...,p$,  $\bar{\tau}_{n}^{(k)}:=\max\limits_{r=1,...,p}O\left( \tau_{n}^{(k,r)}\right)$. For example, we may take $\bar{\tau}_{n}^{(k)}:=\sqrt{\mathbb{E}\left[\left(\sum_{r=1}^{p}\Delta^{n}_{1}G^{(k,r,r)}_{t}\right)^{2}\right]}$ or $\bar{\tau}_{n}^{(k)}:=\max\limits_{r=1,...,p}\sqrt{\mathbb{E}\left[\left(\Delta^{n}_{1}G^{(k,r,r)}_{t}\right)^{2}\right]}$. The Gaussian core is given by $\{\textbf{G}_{t}\}_{t\in[0,T]}$ with $\textbf{G}_{t}=\left(G^{(k,r,m)}_{t} \right)_{k,r,m=1,...,p}=\left(\int_{0}^{t}g^{(k,r)}(t-s)dW_{s}^{(m)} \right)_{k,r,m=1,...,p}$.  Moreover, let for $h\in\mathbb{N}$, $\bar{r}_{k,r,m;l,q,w}^{(n)}(h):=\mathbb{E}\left[\dfrac{\Delta^{n}_{1}G^{(k,r,m)}}{\bar{\tau}_{n}^{(k)}}\dfrac{\Delta^{n}_{1+h}G^{(l,q,w)}}{\bar{\tau}_{n}^{(l)}} \right]$, and observe that if $m\neq w$ then $\bar{r}_{k,r,m;l,q,w}^{(n)}(h)=0$. 
\begin{ass}\label{AU2} Let 
$\frac{1}{\sqrt{n}}\sum_{i=1}^{\lfloor nt\rfloor}\frac{\Delta_{i}^{n}(Y^{(k)}-U^{(k)})}{\bar{\tau}^{(k)}_{n}}\frac{\Delta_{i}^{n}U^{(l)}}{\bar{\tau}^{(l)}_{n}}\stackrel{u.c.p.}{\rightarrow}0$ and $\frac{1}{\sqrt{n}}\sum_{i=1}^{\lfloor nt\rfloor}\frac{\Delta_{i}^{n}U^{(k)}}{\bar{\tau}^{(k)}_{n}}\frac{\Delta_{i}^{n}U^{(l)}}{\bar{\tau}^{(l)}_{n}}\stackrel{u.c.p.}{\rightarrow}0$ for any $k,l=1,...,p$.
\end{ass}
\begin{thm}\label{NEW-I-Vech}
Under Assumptions \ref{1}, \ref{2} and \ref{3} applied to $\bar{\tau}_{n}^{(l)},\bar{r}_{k,r,m;l,q,m}^{(n)}$ for $l,q,k,r,m=1,...,p$, and Assumption \ref{AU2}, we have the following stable convergence.
\begin{equation*}
\Bigg\{\sqrt{n}\Bigg[\frac{1}{n}\sum_{i=1}^{\lfloor nt\rfloor}\frac{\Delta_{i}^{n}Y^{(k)}}{\bar{\tau}^{(k)}_{n}}\frac{\Delta_{i}^{n}Y^{(l)}}{\bar{\tau}^{(l)}_{n}}-\sum_{r,m,q=1}^{p}\mathbb{E}\left[\frac{\Delta_{1}^{n}G^{(k,r,m)}}{\bar{\tau}^{(k)}_{n}}\frac{\Delta_{1}^{n}G^{(l,q,m)}}{\bar{\tau}^{(l)}_{n}}\right]\int_{0}^{t}\sigma^{(r,m)}_{s}\sigma^{(q,m)}_{s}ds\Bigg]_{k=1,...,p;l\leq k}\Bigg\}_{t\in[0,T]}
\end{equation*}
\begin{equation*}
\stackrel{st}{\rightarrow}\Bigg\{\int_{0}^{t}V_{s}\textit{\textbf{D}}^{1/2}dB_{s}\Bigg\}_{t\in[0,T]}
\end{equation*}
in $\mathcal{D}([0,T],\mathbb{R}^{\frac{p}{2}(p+1)})$, where $\textit{\textbf{D}}$ and $V_{s}$ are introduced in the Appendix, and $B_{s}$ is a $\frac{p^{5}}{2}(p+1)$-dimensional Brownian motion.
\end{thm}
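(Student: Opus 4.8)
The plan is to mirror, essentially verbatim, the architecture of the proof of Theorem~\ref{I} together with its multivariate and vech extensions (Theorem~\ref{I-Multi} and Corollary~\ref{I-Vech}), the only substantive change being that the Gaussian-core CLT is now supplied by Theorem~\ref{2-CLT-M} rather than Theorem~\ref{CLT-M}, and that the component-wise normalisation $\tau_n^{(k,r)}$ is everywhere replaced by the coarser row-wise normalisation $\bar\tau_n^{(k)}$. First I would dispose of the drift, assuming $\mathbf{U}_t\equiv\mathbf{0}$, so that $\Delta_i^n Y^{(k)}=\sum_{r,m=1}^{p}\Delta_i^n Z^{(k,r,m)}$ with the $Z^{(k,r,m)}$ as in Section~\ref{The joint bivariate case}. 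Dividing by $\bar\tau_n^{(k)}$ and applying the blocking technique of~\cite{BCD,CHPP,Andrea1}, I would split $\sqrt n\,[\,\cdot\,]$ into $A_n+C_n$, where $C_n$ is the ``diagonal in time'' part with each $\sigma$ frozen at the left endpoint of its block $I_p(j)$ and $A_n$ is the remainder; since the driving Brownian measures are assumed mutually independent here, $\bar r_{k,r,m;l,q,w}^{(n)}(h)=0$ whenever $m\neq w$, which is exactly why only the terms with $m=w$ survive in the centering and the limiting drift correction takes the form $\sum_{r,m,q}\mathbb{E}[\cdots]\int_0^t\sigma_s^{(r,m)}\sigma_s^{(q,m)}ds$.

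For $C_n$: Theorem~\ref{2-CLT-M}, applied to the Gaussian core $\{\mathbf{G}_t\}=\{(G_t^{(k,r,m)})_{k,r,m}\}$ normalised by $\bar\tau_n^{(\cdot)}$ and under Assumption~\ref{2} imposed on $\bar r$, gives the (mixing, hence stable) convergence of $\frac{1}{\sqrt n}\sum_{i\le\lfloor nt\rfloor}\big(\frac{\Delta_i^n G^{(k,r,m)}}{\bar\tau_n^{(k)}}\frac{\Delta_i^n G^{(l,q,w)}}{\bar\tau_n^{(l)}}-\mathbb{E}[\cdots]\big)$ towards $\textbf{D}^{1/2}B_t$ in $\mathcal{D}([0,T],\mathbb{R}^{p^6})$ with $B$ independent of $\mathbf{G}$. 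Exactly as in Theorem~\ref{I}, using the $\mathcal{F}$-measurability of the $\sigma$'s, the properties of stable convergence, and the multivariate continuous mapping theorem applied to stable convergence (cf.~\cite{Aldo}), one obtains for each fixed block count $p$ that $C_n\stackrel{st}{\rightarrow}\sum_{j\le\lfloor pt\rfloor}V_{(j-1)\Delta_p}\textbf{D}^{1/2}(B_{j\Delta_p}-B_{(j-1)\Delta_p})$, and then, letting $p\to\infty$ and invoking the right-continuity of $t\mapsto\sigma_t$, the in-probability limit $\int_0^t V_s\textbf{D}^{1/2}dB_s$. The essential remark, already recorded in the proof of Theorem~\ref{2-CLT-M}, is that this argument nowhere requires $\mathbb{E}[(\Delta_i^n G^{(k,r,m)}/\bar\tau_n^{(k)})^2]=1$; replacing $\tau_n^{(k,r)}$ by the larger $\bar\tau_n^{(k)}$ merely rescales the relevant quantities and at worst annihilates some entries of $\textbf{D}$.

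For $A_n$: here I would invoke, one pair $(k,l)$ at a time (with $l\le k$, matching the vech indexing), the estimates from the proof of Theorem~4 of~\cite{BCD} and of~\cite{Andrea1}, which are governed by Assumptions~\ref{1}, \ref{2} and~\ref{3}. The only modification is that these assumptions are now posited with $\bar\tau_n^{(l)}$ and $\bar r_{k,r,m;l,q,m}^{(n)}$ in place of $\tau_n^{(l,q)}$ and $r^{(n)}$; since $\bar\tau_n^{(k)}\ge\tau_n^{(k,r)}$ for every $r$, all bounds exploited in those proofs remain valid (a larger denominator can only help), and Assumption~\ref{3} in this normalisation is precisely the hypothesis dominating the blocking error. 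Each component of $A_n$ thus converges to $0$ in distribution in $\mathcal{D}([0,T],\mathbb{R})$, hence jointly to $0$ stably; combined with the stable convergence of $C_n$ this yields the stated stable convergence of $A_n+C_n$ when $\mathbf{U}\equiv\mathbf{0}$.

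Finally, to reintroduce the drift, write $\Delta_i^n Y^{(k)}=\Delta_i^n(Y^{(k)}-U^{(k)})+\Delta_i^n U^{(k)}$ and expand $\frac{\Delta_i^n Y^{(k)}}{\bar\tau_n^{(k)}}\frac{\Delta_i^n Y^{(l)}}{\bar\tau_n^{(l)}}$; the drift-free part is covered above, while the two cross terms and the drift--drift term are, after multiplication by $\sqrt n$ and summation over $i\le\lfloor nt\rfloor$, exactly the quantities assumed to vanish in u.c.p.\ in Assumption~\ref{AU2}. Since they converge to $0$ in u.c.p.\ component-wise in $(k,l)$, hence jointly, the stable limit is unchanged, and the proof is complete. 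I expect the single delicate point to be verifying that the local-in-time blocking estimates of~\cite{BCD,Andrea1} genuinely survive the passage from the matched normalisation $\tau_n^{(k,r)}$ to the coarser $\bar\tau_n^{(k)}$ (so that Assumptions~\ref{1} and~\ref{3} in the new normalisation really do suffice); everything else is a bookkeeping extension of the bivariate Case~I argument.
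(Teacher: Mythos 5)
Your proposal is correct and follows essentially the same route as the paper: the paper's own proof simply states that the argument of Theorem \ref{I} carries over with Theorem \ref{2-CLT-M} replacing Theorem \ref{CLT-M}, that the $A_n$ terms still vanish because the denominator $\bar\tau_n^{(k)}$ is larger, and that the drift is handled by Assumption \ref{AU2} — exactly the three points you develop. Your write-up is in fact more detailed than the paper's, and your closing caveat about checking that the blocking estimates survive the coarser normalisation identifies the one point the paper also flags (and dismisses for the same reason: a larger denominator can only help).
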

\begin{proof}
It follows from the same arguments as the ones used in the proof of Theorem \ref{I}, the only difference is the use of Theorem \ref{2-CLT-M} instead of Theorem \ref{CLT-M}. In particular, the elements that converge to zero (which we call $A_{n}$ in the proof of Theorem \ref{I}) still converge to zero since we are using a larger denominator. For the other elements (which we call $C_{n}$) we do have the stated convergence due to Theorem \ref{2-CLT-M}.
\end{proof}
\subsection{Case II: second scenario}\label{The second case}
In this section we present and prove the results for the other version of the multivariate BSS process introduced in Definition \ref{BSS}. In addition, at the price of a simple assumption on the stochastic volatilities $\sigma$s, this new form allows for a definition of $\tau_{n}$ in terms of the BSS process and not in terms of Gaussian core.

Consider the stochastic process $\{\textbf{X}_{t}\}_{t\in[0,T]}=\{(X_{t}^{(1)},...,X_{t}^{(p)})\}_{t\in[0,T]}$ given by
\begin{equation}\label{Y-thirdcase}
\textbf{X}_{t}=\int_{-\infty}^{t} \begin{pmatrix}
             g^{(1,1)}(t-s)\sigma^{(1,1)}_{s}  &\cdots  & g^{(1,p)}(t-s)\sigma^{(1,p)}_{s} \\ \vdots &
             \ddots  & \vdots \\ g^{(p,1)}(t-s)\sigma^{(p,1)}_{s} & \cdots & g^{(p,p)}(t-s)\sigma^{(p,p)}_{s}       
            \end{pmatrix} 
  \begin{pmatrix}
     dW^{(1)}_{t}   \\\vdots\\
     dW^{(p)}_{t} 
    \end{pmatrix}+ \begin{pmatrix}
         U^{(1)}_{s}   \\\vdots\\
         U^{(p)}_{s} 
        \end{pmatrix}.
\end{equation}
Assume that the $\mathcal{F}_{t}$-Brownian measures are all \textit{independent} of each other and of the drift process, and that $\mathbb{E}\left[(\sigma_{s}^{(k,m)})^{2}\right]>C$ for any $k,m=1,...,p$ and $s\in[0,T]$, \textit{e.g}.~the volatilities are second order stationary. Let us define, for $k=1,...,p$, $\tilde{\tau}_{n}^{(k)}:=\sqrt{\mathbb{E}\left[\left(\Delta^{n}_{1}X^{(k)}\right)^{2}\right]}$. Following Example \ref{Example1}, it is possible to observe that $\tilde{\tau}_{n}^{(k)}$ is similar to $\tau_{n}^{(\alpha_{h})}$ introduced in $(\ref{tau-secondcase})$ (except for the drift component). The Gaussian core is given by $\{\textbf{G}_{t}\}_{t\in[0,T]}$ with $\textbf{G}_{t}=\left(G^{(k,m)}_{t} \right)_{k,m=1,...,p}=\left(\int_{0}^{t}g^{(k,m)}(t-s)dW_{s}^{(m)} \right)_{k,m=1,...,p}$ and the partition is in the $k$ variables, namely we split $\{1,...,p\}^{2}$ into $\{1\}\times\{1,...,p\},...,\{p\}\times\{1,...,p\}$. Notice that for each element of the partition the associated Brownian measures are independent of each other. Further, we define $\tilde{r}_{k,m;l,w}^{(n)}(h):=\mathbb{E}\left[\dfrac{\Delta^{n}_{1}G^{(k,m)}}{\tilde{\tau}_{n}^{(k)}}\dfrac{\Delta^{n}_{1+h}G^{(l,w)}}{\tilde{\tau}_{n}^{(l)}} \right]$; observe that if $m\neq w$ then $\tilde{r}_{k,m;l,w}^{(n)}(h)=0$ and that the rate of $\tilde{\tau}_{n}^{(k)}$ is greater than or equal to the order of $\tau_{n}^{(k,r)}$ $n\rightarrow\infty$  for any $r=1,..,p$.
\begin{thm}\label{I-Vech-2}
Under Assumptions \ref{1}, \ref{2} and \ref{3} applied to $\tilde{\tau}_{n}^{(l)},\tilde{r}_{k,m;l,m}^{(n)}$ and Assumption \ref{AU2} applied to $X^{(k)}$ for $l,k,m=1,...,p$, we have the following stable convergence.
\begin{equation*}
\Bigg\{\sqrt{n}\Bigg[\frac{1}{n}\sum_{i=1}^{\lfloor nt\rfloor }\frac{\Delta_{i}^{n}X^{(k)}}{\tilde{\tau}_{n}^{(k)}}\frac{\Delta_{i}^{n}X^{(l)}}{\tilde{\tau}_{n}^{(l)}}-\sum_{m=1}^{p}\mathbb{E}\left[\frac{\Delta_{1}^{n}G^{(k,m)}}{\tilde{\tau}_{n}^{(k)}}\frac{\Delta_{1}^{n}G^{(l,m)}}{\tilde{\tau}_{n}^{(l)}}\right]\int_{0}^{t}\sigma^{(k,m)}_{s}\sigma^{(l,m)}_{s}ds\Bigg]_{k=1,...,p;l\leq k}\Bigg\}_{t\in[0,T]}
\end{equation*}
\begin{equation*}
\stackrel{st}{\rightarrow}\Bigg\{\int_{0}^{t}V_{s}\textit{\textbf{D}}^{1/2}dB_{s}\Bigg\}_{t\in[0,T]}
\end{equation*}
in $\mathcal{D}([0,T],\mathbb{R}^{\frac{p}{2}(p+1)})$, where $\textit{\textbf{D}}$ and $V_{s}$ are introduced in the Appendix and $B_{s}$ is a $\frac{p^{3}}{2}(p+1)$-dimensional Brownian motion.
\end{thm}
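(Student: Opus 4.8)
The plan is to follow the blueprint of the proof of Theorem~\ref{I} (equivalently of Theorem~\ref{NEW-I-Vech}), with the role of Theorem~\ref{CLT-M} played by Theorem~\ref{2-CLT-M} since we now use the enlarged, data-driven scaling $\tilde{\tau}_n^{(k)}$, and with the bookkeeping adapted to the second version of the multivariate BSS process. For this version the increment splits as $\Delta_i^n X^{(k)}=\sum_{m=1}^p\Delta_i^n\mathcal{Z}^{(k,m)}+\Delta_i^n U^{(k)}$, where $\Delta_i^n\mathcal{Z}^{(k,m)}:=\int_{(i-1)\Delta_n}^{i\Delta_n}g^{(k,m)}(i\Delta_n-s)\sigma_s^{(k,m)}\,dW_s^{(m)}+\int_{-\infty}^{(i-1)\Delta_n}\Delta_i^n g^{(k,m)}(s)\sigma_s^{(k,m)}\,dW_s^{(m)}$. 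Hence $\frac{\Delta_i^n X^{(k)}}{\tilde{\tau}_n^{(k)}}\frac{\Delta_i^n X^{(l)}}{\tilde{\tau}_n^{(l)}}$ is, up to drift cross-terms, the double sum over $(m,w)\in\{1,\dots,p\}^2$ of the normalised products $\frac{\Delta_i^n\mathcal{Z}^{(k,m)}}{\tilde{\tau}_n^{(k)}}\frac{\Delta_i^n\mathcal{Z}^{(l,w)}}{\tilde{\tau}_n^{(l)}}$, and the corresponding Gaussian-core objects are $\frac{\Delta_i^n G^{(k,m)}}{\tilde{\tau}_n^{(k)}}\frac{\Delta_i^n G^{(l,w)}}{\tilde{\tau}_n^{(l)}}$ with $G^{(k,m)}_t=\int_0^t g^{(k,m)}(t-s)\,dW_s^{(m)}$; this explains why, after the \textnormal{vech} reduction in $(k,l)$, the second-chaos vector has $p^2$ entries per pair and $B_s$ is $\tfrac{p^3}{2}(p+1)$-dimensional.

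First I would record the asymptotic order of $\tilde{\tau}_n^{(k)}$. Because the Brownian measures $W^{(m)}$ are mutually independent, $\mathbb{E}[(\Delta_1^n X^{(k)})^2]$ equals, up to the negligible drift contribution, $\sum_{m=1}^p\mathbb{E}[(\Delta_1^n\mathcal{Z}^{(k,m)})^2]$; using the explicit expression for $\mathbb{E}[(\Delta_1^n\mathcal{Z}^{(k,m)})^2]$ and the uniform lower bound $\mathbb{E}[(\sigma_s^{(k,m)})^2]>C$ yields $\mathbb{E}[(\Delta_1^n\mathcal{Z}^{(k,m)})^2]\ge C\,(\tau_n^{(k,m)})^2$, so that $\tau_n^{(k,r)}=O(\tilde{\tau}_n^{(k)})$ for every $r$, as noted before the statement. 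This places us in the Case~II framework: dividing by a scaling of order at least $\tau_n^{(k,r)}$ cannot worsen the vanishing error terms, while Assumptions~\ref{2} and~\ref{3}, now imposed on $\tilde{\tau}_n^{(l)}$ and $\tilde{r}^{(n)}_{k,m;l,m}$, guarantee that the limiting covariances are finite, so Theorem~\ref{2-CLT-M} applies to the Gaussian core $\textbf{G}_t=(G^{(k,m)}_t)_{k,m=1,\dots,p}$.

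Next I would run the blocking argument exactly as in Theorem~\ref{I}. Setting the drift to zero for the moment, write the statistic as $A_n+C_n$, where $C_n$ freezes each $\sigma^{(k,m)}$ at the left endpoint of the block $I_p(j)=\{i:i/n\in((j-1)/p,j/p]\}$, collected through a block volatility matrix $V_{(j-1)\Delta_p}$, and $A_n$ is the remainder. For $C_n$: apply Theorem~\ref{2-CLT-M} to the \textnormal{vech}-reduced vector of centred normalised products of $G$-increments to get stable convergence to $\textbf{D}^{1/2}B_t$; then use stability of $\stackrel{st}{\rightarrow}$ under left-multiplication by the $\mathcal{F}$-measurable matrix $V_{(j-1)\Delta_p}$, and let the number of blocks tend to infinity, exploiting the c\`{a}dl\`{a}g paths of the $\sigma$'s to pass to $\int_0^t V_s\textbf{D}^{1/2}dB_s$. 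For $A_n$: each component converges to $0$ in distribution by the estimates of \cite{BCD} and \cite{Andrea1} (valid here since $\tilde{\tau}_n^{(k)}$ dominates $\tau_n^{(k,r)}$ and Assumptions~\ref{1},~\ref{2},~\ref{3} hold for $\tilde{\tau}$), hence they converge jointly, hence jointly stably to $0$; note that the centring reduces to a single sum over $m$ because $\mathbb{E}[\frac{\Delta_1^n G^{(k,m)}}{\tilde{\tau}_n^{(k)}}\frac{\Delta_1^n G^{(l,w)}}{\tilde{\tau}_n^{(l)}}]=0$ whenever $m\ne w$. Combining, $A_n+C_n\stackrel{st}{\rightarrow}\int_0^t V_s\textbf{D}^{1/2}dB_s$. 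Finally, reinstating the drift, the additional cross-terms $\frac{1}{\sqrt n}\sum_{i=1}^{\lfloor nt\rfloor}\frac{\Delta_i^n(X^{(k)}-U^{(k)})}{\tilde{\tau}_n^{(k)}}\frac{\Delta_i^n U^{(l)}}{\tilde{\tau}_n^{(l)}}$ and $\frac{1}{\sqrt n}\sum_{i=1}^{\lfloor nt\rfloor}\frac{\Delta_i^n U^{(k)}}{\tilde{\tau}_n^{(k)}}\frac{\Delta_i^n U^{(l)}}{\tilde{\tau}_n^{(l)}}$ vanish in u.c.p.\ by Assumption~\ref{AU2} applied to $X^{(k)}$, so the stable limit is unchanged, which is the assertion.

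The main obstacle is the order computation for the \emph{data-driven} scaling $\tilde{\tau}_n^{(k)}$: everything downstream is a largely mechanical, if notationally heavy, transcription of the proof of Theorem~\ref{I}, but one must check that defining $\tilde{\tau}_n^{(k)}$ through $X^{(k)}$ rather than through the Gaussian core does not change the rate. This is exactly where the independence of the $W^{(m)}$ and the uniform lower bound on $\mathbb{E}[(\sigma_s^{(k,m)})^2]$ enter (for the two-sided comparison with the $\tau_n^{(k,r)}$), with Assumption~\ref{3} for $\tilde{\tau}$ controlling the far-past part of the increment. Once this order relation is in place, the Case~II apparatus of Theorem~\ref{2-CLT-M} together with the blocking technique yields the result.
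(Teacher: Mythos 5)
Your proposal is correct and follows essentially the same route as the paper: the paper's own proof is a one-line reference to the arguments of Theorem \ref{I} combined with Theorem \ref{2-CLT-M}, which is exactly the blocking-plus-enlarged-scaling argument you spell out, including the key preliminary observation (stated by the authors just before the theorem) that independence of the $W^{(m)}$ and the lower bound $\mathbb{E}[(\sigma_s^{(k,m)})^2]>C$ give $\tau_n^{(k,r)}=O(\tilde{\tau}_n^{(k)})$. Your dimension count and the treatment of the drift via Assumption \ref{AU2} also match the intended argument.
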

\begin{proof}
It follows from the arguments of Theorem \ref{I} and the results of Theorem \ref{2-CLT-M}.
\end{proof}
\begin{rem}
It is possible to obtain a similar result to Theorem \ref{I-Vech-2} using a formulation for the $\tau$s which is not written in terms of the $\sigma$s, (hence without the assumption of the independence of the Brownian measures and on the expected squared value of the $\sigma$s required for Theorem \ref{I-Vech-2}). Indeed, we could have proceeded as in the previous section (\textit{i.e}.~Case II first scenario). The decision to present this setting comes from the novel possibility, given by the multidimensional structure of the BSS process presented in this section (see (\ref{Y-thirdcase})), to write $\tau$ in terms of $X$. Thus, in case we have an estimate of $\mathbb{E}\left[\left(\Delta^{n}_{1}X^{(k)}\right)^{2}\right]$, Theorem \ref{I-Vech-2} becomes a feasible central limit theorem.
\end{rem}
\section{Weak laws of large numbers}\label{WEAK}
From the central limit theorems proved in the previous sections it possible to derive the weak law of large numbers (WLLN). First, we present the following lemma which follows from the definition of uniform convergence in probability.
\begin{lem}\label{lemma}
Consider $p$ real valued stochastic processes $\{H^{(1)}_{t}\}_{t\in[0,T]},...,\{H^{(p)}_{t}\}_{t\in[0,T]}$. Further, consider sequences of random variables $\{H^{(i),n}_{t}\}_{t\in[0,T]}$ such that $H^{(i),n}_{t}\stackrel{u.c.p.}{\rightarrow}H^{(i)}_{t}$, for $i=1,...,p$ and for any $t\in[0,T]$, then $(H^{(1),n}_{t},...,H^{(p),n}_{t})\stackrel{u.c.p.}{\rightarrow}(H^{(1)}_{t},...,H^{(p)}_{t})$.
\begin{proof}
Consider the case $p=2$, since for $p>2$ the proof uses exactly the same arguments. Let $t\in[0,T]$, we have
\begin{equation*}
\mathbb{P}\left(\sup\limits_{s\in[0,t]}|(H^{(1),n}_{s},H^{(2),n}_{s})-(H^{(1)}_{s},H^{(2)}_{s})|>\epsilon\right)\leq \mathbb{P}\left(\sup\limits_{s\in[0,t]}|H^{(1),n}_{s}-H^{(1)}_{s}|+|H^{(2),n}_{s}-H^{(2)}_{s}|>\epsilon\right)
\end{equation*}
\begin{equation*}
\leq \mathbb{P}\left(\sup\limits_{s\in[0,t]}|H^{(1),n}_{s}-H^{(1)}_{s}|>\epsilon\right)+\mathbb{P}\left(\sup\limits_{s\in[0,t]}|H^{(2),n}_{s}-H^{(2)}_{s}|>\epsilon\right)\rightarrow 0,\quad\text{as $n\rightarrow\infty$.}
\end{equation*}
\end{proof}
\end{lem}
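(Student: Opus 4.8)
The plan is to deduce the joint u.c.p.\ convergence from the $p$ one-dimensional hypotheses by combining the equivalence of norms on $\mathbb{R}^{p}$ with a union bound. First I would fix $t\in[0,T]$ and $\epsilon>0$ and note that, since $\mathbb{R}^{p}$ is finite dimensional, all norms on it are equivalent, so it suffices to prove the convergence for the $\ell^{1}$-norm $\|x\|_{1}=\sum_{i=1}^{p}|x_{i}|$; establishing the statement for this norm automatically gives it for the norm $\|\cdot\|_{\mathbb{R}^{p}}$ appearing in $\|\cdot\|_{\mathcal{D}([0,T],\mathbb{R}^{p})}$.

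Next I would use the elementary bound
\begin{equation*}
\sup_{s\in[0,t]}\left\|(H^{(1),n}_{s},\dots,H^{(p),n}_{s})-(H^{(1)}_{s},\dots,H^{(p)}_{s})\right\|_{1}=\sup_{s\in[0,t]}\sum_{i=1}^{p}\left|H^{(i),n}_{s}-H^{(i)}_{s}\right|\leq\sum_{i=1}^{p}\sup_{s\in[0,t]}\left|H^{(i),n}_{s}-H^{(i)}_{s}\right|,
\end{equation*}
which shows that the event $\left\{\sup_{s\in[0,t]}\|(H^{(1),n}_{s},\dots,H^{(p),n}_{s})-(H^{(1)}_{s},\dots,H^{(p)}_{s})\|_{1}>\epsilon\right\}$ is contained in $\bigcup_{i=1}^{p}\left\{\sup_{s\in[0,t]}|H^{(i),n}_{s}-H^{(i)}_{s}|>\epsilon/p\right\}$. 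Applying the union bound then yields
\begin{equation*}
\mathbb{P}\left(\sup_{s\in[0,t]}\left\|(H^{(1),n}_{s},\dots,H^{(p),n}_{s})-(H^{(1)}_{s},\dots,H^{(p)}_{s})\right\|_{1}>\epsilon\right)\leq\sum_{i=1}^{p}\mathbb{P}\left(\sup_{s\in[0,t]}\left|H^{(i),n}_{s}-H^{(i)}_{s}\right|>\frac{\epsilon}{p}\right).
\end{equation*}

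Finally, each term on the right-hand side tends to $0$ as $n\to\infty$ by the assumption $H^{(i),n}_{t}\stackrel{u.c.p.}{\rightarrow}H^{(i)}_{t}$, applied with the threshold $\epsilon/p$, and a finite sum of null sequences is null, so the left-hand side tends to $0$ as well; this is exactly the claimed joint u.c.p.\ convergence. I do not expect any genuine obstacle here, the argument being purely a matter of bookkeeping; the only point deserving a little care is to replace the threshold $\epsilon$ by $\epsilon/p$ (or $\epsilon/2$ in the displayed case $p=2$) before invoking the union bound, since a sum of $p$ nonnegative quantities can exceed $\epsilon$ only if at least one of them exceeds $\epsilon/p$.
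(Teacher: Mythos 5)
Your proposal is correct and follows essentially the same route as the paper: bound the supremum of the vector norm by the sum of the componentwise suprema and apply a union bound. In fact your version is slightly more careful, since you correctly lower the threshold to $\epsilon/p$ before splitting the event, whereas the paper's displayed inequality keeps $\epsilon$ on both sides, which is a (harmless but real) slip.
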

We will derive the WLLN for the first scenario of Case II and point out that using the same arguments it is possible to obtain similar results for all the CLT presented in this work. \\Let $\bar{r}_{k,r,m;l,q,w}(h):=\lim\limits_{n\rightarrow\infty}\bar{r}^{(n)}_{k,m;l,w}(h)$ and $\tilde{r}_{k,m;l,w}(h):=\lim\limits_{n\rightarrow\infty}\bar{r}^{(n)}_{k,m;l,w}(h)$, for $k,r,m,l,q,w=1,...,p$.
\\ 
\begin{thm}\label{LLN}
Let the assumptions of Theorem \ref{NEW-I-Vech} hold. Then we have
\begin{equation*}
\left(\frac{1}{n}\sum_{i=1}^{\lfloor nt\rfloor }\frac{\Delta_{i}^{n}Y^{(k)}}{\bar{\tau}^{(k)}_{n}}\frac{\Delta_{i}^{n}Y^{(l)}}{\bar{\tau}^{(l)}_{n}}\right)_{k=1,...,p;l\leq k}\stackrel{u.c.p.}{\rightarrow}\left(\sum_{r,m,q=1}^{p}\bar{r}_{k,r,m;l,q,m}(0)\int_{0}^{t}\sigma^{(r,m)}_{s}\sigma^{(q,m)}_{s}ds\right)_{k=1,...,p;l\leq k}
\end{equation*}
\end{thm}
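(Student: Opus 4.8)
The plan is to deduce this weak law of large numbers directly from the central limit theorem established in Theorem \ref{NEW-I-Vech} together with the fact that a scaled-down version of a stably (hence weakly) convergent sequence converges in probability to zero. First I would recall that Theorem \ref{NEW-I-Vech} gives, under the present assumptions, the convergence
\begin{equation*}
\sqrt{n}\Bigg[\frac{1}{n}\sum_{i=1}^{\lfloor nt\rfloor}\frac{\Delta_{i}^{n}Y^{(k)}}{\bar{\tau}^{(k)}_{n}}\frac{\Delta_{i}^{n}Y^{(l)}}{\bar{\tau}^{(l)}_{n}}-\sum_{r,m,q=1}^{p}\mathbb{E}\left[\frac{\Delta_{1}^{n}G^{(k,r,m)}}{\bar{\tau}^{(k)}_{n}}\frac{\Delta_{1}^{n}G^{(l,q,m)}}{\bar{\tau}^{(l)}_{n}}\right]\int_{0}^{t}\sigma^{(r,m)}_{s}\sigma^{(q,m)}_{s}ds\Bigg]_{k=1,...,p;l\leq k}\stackrel{st}{\rightarrow}\Bigg\{\int_{0}^{t}V_{s}\textbf{D}^{1/2}dB_{s}\Bigg\}
\end{equation*}
in $\mathcal{D}([0,T],\mathbb{R}^{\frac{p}{2}(p+1)})$. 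Dividing the left-hand side by $\sqrt{n}$ and using that the limit is a finite (tight) random element, the continuous mapping theorem (Theorem \ref{Cotinuous mapping}) applied to multiplication by $1/\sqrt n\to 0$ shows that
\begin{equation*}
\left(\frac{1}{n}\sum_{i=1}^{\lfloor nt\rfloor}\frac{\Delta_{i}^{n}Y^{(k)}}{\bar{\tau}^{(k)}_{n}}\frac{\Delta_{i}^{n}Y^{(l)}}{\bar{\tau}^{(l)}_{n}}-\sum_{r,m,q=1}^{p}\mathbb{E}\left[\frac{\Delta_{1}^{n}G^{(k,r,m)}}{\bar{\tau}^{(k)}_{n}}\frac{\Delta_{1}^{n}G^{(l,q,m)}}{\bar{\tau}^{(l)}_{n}}\right]\int_{0}^{t}\sigma^{(r,m)}_{s}\sigma^{(q,m)}_{s}ds\right)_{k=1,...,p;l\leq k}\stackrel{u.c.p.}{\rightarrow}0,
\end{equation*}
where I use that convergence in distribution to a continuous limit on $\mathcal{D}([0,T],\mathbb{R}^{d})$ with the uniform metric, when the limit is the zero process, upgrades to $\stackrel{u.c.p.}{\rightarrow}0$ (as noted in the preliminaries).

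Next I would handle the deterministic centering term. For each fixed $k,l,r,m,q$ one has $\mathbb{E}\left[\frac{\Delta_{1}^{n}G^{(k,r,m)}}{\bar{\tau}^{(k)}_{n}}\frac{\Delta_{1}^{n}G^{(l,q,m)}}{\bar{\tau}^{(l)}_{n}}\right]=\bar{r}^{(n)}_{k,r,m;l,q,m}(0)$, which converges to $\bar{r}_{k,r,m;l,q,m}(0)$ by definition of the latter (note Assumption \ref{2} guarantees the relevant limits exist, and the $h=0$ correlation is bounded). Since $t\mapsto\int_0^t\sigma^{(r,m)}_s\sigma^{(q,m)}_s ds$ is a fixed continuous (hence locally bounded) process, the product
\begin{equation*}
\bar{r}^{(n)}_{k,r,m;l,q,m}(0)\int_{0}^{t}\sigma^{(r,m)}_{s}\sigma^{(q,m)}_{s}ds\;\longrightarrow\;\bar{r}_{k,r,m;l,q,m}(0)\int_{0}^{t}\sigma^{(r,m)}_{s}\sigma^{(q,m)}_{s}ds
\end{equation*}
uniformly in $t\in[0,T]$, almost surely; summing the finitely many terms over $r,m,q=1,\dots,p$ preserves this uniform convergence. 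Adding this deterministic $u.c.p.$ limit back to the previous display, and invoking Lemma \ref{lemma} to pass from componentwise $u.c.p.$ convergence (indexed by the pairs $k=1,\dots,p$, $l\leq k$) to joint $u.c.p.$ convergence of the vector, yields exactly the claimed statement.

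I do not expect a serious obstacle here: the result is a soft corollary of Theorem \ref{NEW-I-Vech}. The one point requiring a little care is the passage from stable (distributional) convergence of the $\sqrt n$-rescaled quantity to $u.c.p.$ convergence of the unscaled quantity. The clean way is: stable convergence implies convergence in distribution; multiplying by the deterministic null sequence $1/\sqrt n$ and applying the continuous mapping theorem gives convergence in distribution to the zero process; since the zero process is deterministic, convergence in distribution to it is equivalent to convergence in probability, which on $\mathcal{D}([0,T],\mathbb{R}^d)$ with the uniform metric is precisely $u.c.p.$ convergence (as recorded in Section \ref{preliminary}). The rest is bookkeeping: identifying the expectation terms with $\bar r^{(n)}(0)$, taking their limits, and using Lemma \ref{lemma} for the joint statement.
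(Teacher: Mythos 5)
Your proposal is correct and follows essentially the same route as the paper: both deduce the WLLN from Theorem \ref{NEW-I-Vech} by killing the $\sqrt{n}$ factor (Slutsky / continuous mapping), identifying the centering term with $\bar{r}^{(n)}_{k,r,m;l,q,m}(0)\to\bar{r}_{k,r,m;l,q,m}(0)$, and invoking Lemma \ref{lemma} for the joint statement. The only cosmetic difference is that you obtain uniformity in $t$ directly from the functional convergence in $\mathcal{D}([0,T],\mathbb{R}^{\frac{p}{2}(p+1)})$ with the uniform metric (a clean shortcut), whereas the paper argues pointwise and then cites Remark 4.25 of \cite{PhD} to upgrade to u.c.p.
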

\begin{proof}
Fix $l,k\in\{1,...,p\}$. From Theorem \ref{NEW-I-Vech} we have that 
\begin{equation*}
\sqrt{n}\Bigg[\frac{1}{n}\sum_{i=1}^{\lfloor nt\rfloor}\frac{\Delta_{i}^{n}Y^{(k)}}{\tau^{(k)}_{n}}\frac{\Delta_{i}^{n}Y^{(l)}}{\tau^{(l)}_{n}}-\sum_{r,m,q=1}^{p}\mathbb{E}\left[\frac{\Delta_{1}^{n}G^{(k,r,m)}}{\bar{\tau}^{(k)}_{n}}\frac{\Delta_{1}^{n}G^{(l,q,m)}}{\bar{\tau}^{(l)}_{n}}\right]\int_{0}^{t}\sigma^{(r,m)}_{s}\sigma^{(q,m)}_{s}ds\Bigg]
\end{equation*}
converges in distribution. Now, by Slutsky's theorem we have that  
\begin{equation*}
\Bigg[\frac{1}{n}\sum_{i=1}^{\lfloor nt\rfloor}\frac{\Delta_{i}^{n}Y^{(k)}}{\tau^{(k)}_{n}}\frac{\Delta_{i}^{n}Y^{(l)}}{\tau^{(l)}_{n}}-\sum_{r,m,q=1}^{p}\mathbb{E}\left[\frac{\Delta_{1}^{n}G^{(k,r,m)}}{\bar{\tau}^{(k)}_{n}}\frac{\Delta_{1}^{n}G^{(l,q,m)}}{\bar{\tau}^{(l)}_{n}}\right]\int_{0}^{t}\sigma^{(r,m)}_{s}\sigma^{(q,m)}_{s}ds\Bigg]\stackrel{d}{\rightarrow}0,
\end{equation*}
\begin{equation*}
\Rightarrow\Bigg[\frac{1}{n}\sum_{i=1}^{\lfloor nt\rfloor}\frac{\Delta_{i}^{n}Y^{(k)}}{\tau^{(k)}_{n}}\frac{\Delta_{i}^{n}Y^{(l)}}{\tau^{(l)}_{n}}-\sum_{r,m,q=1}^{p}\mathbb{E}\left[\frac{\Delta_{1}^{n}G^{(k,r,m)}}{\bar{\tau}^{(k)}_{n}}\frac{\Delta_{1}^{n}G^{(l,q,m)}}{\bar{\tau}^{(l)}_{n}}\right]\int_{0}^{t}\sigma^{(r,m)}_{s}\sigma^{(q,m)}_{s}ds\Bigg]\stackrel{P}{\rightarrow}0.
\end{equation*}
Then, by triangular inequality it follows that
\begin{equation*}
\frac{1}{n}\sum_{i=1}^{\lfloor nt\rfloor }\frac{\Delta_{i}^{n}Y^{(k)}}{\tau^{(k)}_{n}}\frac{\Delta_{i}^{n}Y^{(l)}}{\tau^{(l)}_{n}}\stackrel{P}{\rightarrow}\sum_{r,m,q=1}^{p}\bar{r}_{k,r,m;l,q,m}(0)\int_{0}^{t}\sigma^{(r,m)}_{s}\sigma^{(q,m)}_{s}ds.
\end{equation*}
The uniform convergence in probability comes from Remark 4.25 of \cite{PhD}, while the joint uniform convergence follows from Lemma \ref{lemma}. 
\end{proof}
\begin{rem}
Similar weak laws of large numbers corresponding to all the others central limit theorems presented in this work, including the ones for the multivariate stationary Gaussian processes, can be derived using the same arguments.
\end{rem}
\section{Feasible results}\label{FEASIBLE}
In all the limit theorems presented above, we considered scaling of the increments of the corresponding processes $X,Y$ or $G$ by $\tau_{n}$. However, in empirical applications $\tau_{n}$ would not be known, which makes the limit theorems infeasible in the sense that they are not computable from empirical data. We now want to move on to derive related feasible results which can be implemented in empirical applications. To this end we somehow need to get rid off the $\tau_{n}$s. A natural way of doing this is to consider suitable ratios of the statistics considered above so that the scaling parameters cancel out.

In this section we will focus on two kind of feasible results, which differ from each other for the type of ratio considered. They are the \textit{correlation ratio} and the \textit{relative covolatility}. For the latter see \cite{Pakkanen}. We will show feasible results for both scenarios of Case II. Moreover, we will present the results using the $vech$ formulation, however similar results hold true for the general formulation. The reason why we focus only on Case II is because for Case I it is not possible to get rid off the scaling factor $\tau$ (unless in trivial cases) and, hence, to get feasible limit theorems.
\begin{rem}
	From the feasible results developed in this section it is possible to obtain estimates for the mean of our process. Thus, we have ``first order" feasible results. It is an open question whether it is possible to obtain ``second order" feasible results, namely estimates for the asymptotic covariance. For the univariate BSS process, this question has been solved for the power variation case in \cite{Pakkanen}, but it still remains open for the multipower variation case.
\end{rem}
In this chapter we will make considerable use of certain random variables and in order to simplify the exposition we decided to use the following formulation. For any $l,k=1,...,p$, we define
\begin{equation*}
\bar{R}^{(k,l)}_{t,n}:=\sum_{r,m,q=1}^{p}\mathbb{E}\left[\frac{\Delta_{1}^{n}G^{(k,r,m)}}{\bar{\tau}^{(k)}_{n}}\frac{\Delta_{1}^{n}G^{(l,q,m)}}{\bar{\tau}^{(l)}_{n}}\right]\int_{0}^{t}\sigma^{(r,m)}_{s}\sigma^{(q,m)}_{s}ds,
\end{equation*}
\begin{equation*}
\bar{R}^{(k,l)}_{t}:=\sum_{r,m,q=1}^{p}\bar{r}_{k,r,m;l,q,m}(0)\int_{0}^{t}\sigma^{(r,m)}_{s}\sigma^{(q,m)}_{s}ds,
\end{equation*}
\begin{equation*}
\tilde{R}^{(k,l)}_{t,n}:=\sum_{m=1}^{p}\mathbb{E}\left[\frac{\Delta_{1}^{n}G^{(k,m)}}{\tilde{\tau}_{n}^{(k)}}\frac{\Delta_{1}^{n}G^{(l,m)}}{\tilde{\tau}_{n}^{(l)}}\right]\int_{0}^{t}\sigma^{(k,m)}_{s}\sigma^{(l,m)}_{s}ds,
\end{equation*}
\begin{equation*}
\text{and}\quad\tilde{R}^{(k,l)}_{t}:=\sum_{m=1}^{p}\tilde{r}_{k,m;l,m}(0)\int_{0}^{t}\sigma^{(k,m)}_{s}\sigma^{(l,m)}_{s}ds.
\end{equation*}
\subsection{Correlation ratio}
Recall that $\bar{\tau}_{n}^{(k)}=\sqrt{\mathbb{E}\left[\left(\Delta^{n}_{1}Y^{(k)}\right)^{2}\right]}$, $\bar{r}_{k,m;l,w}^{(n)}(h)=\mathbb{E}\left[\dfrac{\Delta^{n}_{1}G^{(k,m)}}{\bar{\tau}_{n}^{(k)}}\dfrac{\Delta^{n}_{1+h}G^{(l,w)}}{\bar{\tau}_{n}^{(l)}} \right]$ and $\bar{r}_{k,m;l,w}(h)=\lim\limits_{n\rightarrow\infty}\bar{r}^{(n)}_{k,m;l,w}(h)$. Moreover, observe that, for $k,m=1,...,p$,
\begin{equation*}
\sum_{r,q=1}^{p}\mathbb{E}\left[\frac{\Delta_{1}^{n}G^{(k,r,m)}}{\bar{\tau}^{(k)}_{n}}\frac{\Delta_{1}^{n}G^{(k,q,m)}}{\bar{\tau}^{(k)}_{n}}\right]\int_{0}^{t}\sigma^{(r,m)}_{s}\sigma^{(q,m)}_{s}ds\geq 0,
\end{equation*}
since it is a quadratic form. Therefore, for $k=1,...,p$ we have that $\bar{R}^{(k,k)}_{t,n}\geq 0$ and $\bar{R}^{(k,k)}_{t}\geq 0$ (and $=0$ only in the trivial case). The same applies to $\tilde{R}^{(k,k)}_{t,n}$ and $\tilde{R}^{(k,k)}_{t}$.\\
Before presenting the main results of this section, we introduce the following lemma, which is a generalisation of the functional delta method, see Chapter 3.9 of \cite{VW}, and of Proposition 2 of \cite{Survey}.
\begin{lem}\label{survey}
Let $\mathbb{D}$ and $\mathbb{E}$ be metrizable topological vector spaces and $r_{n}$ constants such that $r_{n}\rightarrow\infty$. Let $\phi:\mathbb{D}_{\phi}\subset \mathbb{D}\rightarrow\mathbb{E}$ be continuous and satisfy
\begin{equation}\label{Hada}
r_{n}(\phi(\theta_{n}+r_{n}^{-1}h_{n})-\phi(\theta_{n}))\rightarrow \phi'(\theta,h),
\end{equation}
for every converging sequences $\theta_{n}$, $h_{n}$ with $h_{n}\rightarrow h\in\mathbb{D}_{0}\subset\mathbb{D}$, $\theta_{n}\rightarrow \theta\in\mathbb{D}_{0}$ and with $\theta_{n} + r_{n}^{-1}h_{n}\in\mathbb{D}_{\phi}$ for all $n$, and some arbitrary map $\phi'(\theta,h)$ from $\mathbb{D}_{0}\times\mathbb{D}_{0}$ to $\mathbb{E}$. If $Y_{n},\bar{Y_{n}}: \Omega_{n}\rightarrow\mathbb{D}_{\phi}$ are maps
with $Y_{n}\stackrel{P}{\rightarrow}Y$ and $r_{n}(Y_{n}-\bar{Y}_{n})\stackrel{st}{\rightarrow}X$, where $X$ is separable and takes its values in $\mathbb{D}_{0}$,
then $r_{n}\left(\phi(Y_{n})-\phi(\bar{Y}_{n})\right)\stackrel{st}{\rightarrow}\phi'(Y,X)$.
\end{lem}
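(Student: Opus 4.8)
The plan is to run the classical functional delta-method argument while keeping track of the stable mode of convergence throughout, combining the extended continuous mapping theorem (Theorem~\ref{Cotinuous mapping}) with the two results on stable convergence recalled in Section~\ref{preliminary}. First I would put $Z_{n}:=r_{n}(Y_{n}-\bar{Y}_{n})$, so that $Z_{n}\stackrel{st}{\rightarrow}X$ by hypothesis. Since $Z_{n}$ converges in distribution it is bounded in probability and $r_{n}\rightarrow\infty$, hence $Y_{n}-\bar{Y}_{n}=r_{n}^{-1}Z_{n}\stackrel{P}{\rightarrow}0$; together with $Y_{n}\stackrel{P}{\rightarrow}Y$ this gives $\bar{Y}_{n}\stackrel{P}{\rightarrow}Y$. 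By the proposition on joint stable convergence stated in Section~\ref{preliminary} (with $Z_{n}$ in the role of the stably convergent sequence and $\bar{Y}_{n}$ in that of the sequence converging in probability), $(Z_{n},\bar{Y}_{n})\stackrel{st}{\rightarrow}(X,Y)$. The identity driving the argument is $\bar{Y}_{n}+r_{n}^{-1}Z_{n}=Y_{n}$: introducing
\begin{equation*}
g_{n}(\theta,h):=r_{n}\big(\phi(\theta+r_{n}^{-1}h)-\phi(\theta)\big),\qquad g_{0}(\theta,h):=\phi'(\theta,h),
\end{equation*}
where $g_{n}$ is taken on the pairs $(\theta,h)$ with $\theta,\theta+r_{n}^{-1}h\in\mathbb{D}_{\phi}$ and $g_{0}$ on $\mathbb{D}_{0}\times\mathbb{D}_{0}$, one has the exact identity $g_{n}(\bar{Y}_{n},Z_{n})=r_{n}(\phi(Y_{n})-\phi(\bar{Y}_{n}))$, so it suffices to identify the stable limit of $g_{n}(\bar{Y}_{n},Z_{n})$.

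Next I would reduce to ordinary weak convergence and apply the extended continuous mapping theorem. Fix an arbitrary bounded $\mathcal{F}$-measurable random variable $W$. Applying the theorem on stable convergence from Section~\ref{preliminary} (with the identity as the continuous map of two variables and $W$ as the fixed $\mathcal{F}$-measurable variable) to $(Z_{n},\bar{Y}_{n})\stackrel{st}{\rightarrow}(X,Y)$ yields $(Z_{n},\bar{Y}_{n},W)\stackrel{st}{\rightarrow}(X,Y,W)$, and in particular $(Z_{n},\bar{Y}_{n},W)\stackrel{d}{\rightarrow}(X,Y,W)$. Set $\tilde{g}_{n}(h,\theta,w):=(g_{n}(\theta,h),w)$ and $\tilde{g}_{0}(h,\theta,w):=(\phi'(\theta,h),w)$, so that $\tilde{g}_{n}(Z_{n},\bar{Y}_{n},W)=(r_{n}(\phi(Y_{n})-\phi(\bar{Y}_{n})),W)$. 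Condition $(\ref{Hada})$ is exactly the hypothesis required in Theorem~\ref{Cotinuous mapping}: along any subsequence, if $(h_{n'},\theta_{n'},w_{n'})\rightarrow(h,\theta,w)$ with $(h,\theta)\in\mathbb{D}_{0}\times\mathbb{D}_{0}$ and the triples in the domains of the $g_{n'}$ — which holds since $\bar{Y}_{n}$ and $Y_{n}=\bar{Y}_{n}+r_{n}^{-1}Z_{n}$ take values in $\mathbb{D}_{\phi}$ — then $\tilde{g}_{n'}(h_{n'},\theta_{n'},w_{n'})\rightarrow\tilde{g}_{0}(h,\theta,w)$, by $(\ref{Hada})$ in the first coordinate and trivially in the second. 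Since $X$ is separable with values in $\mathbb{D}_{0}$ and $Y$ takes values in $\mathbb{D}_{0}$, the weak limit $(X,Y,W)$ lies a.s.\ in the set on which this convergence is required, so Theorem~\ref{Cotinuous mapping} gives
\begin{equation*}
\big(r_{n}(\phi(Y_{n})-\phi(\bar{Y}_{n})),\,W\big)=\tilde{g}_{n}(Z_{n},\bar{Y}_{n},W)\stackrel{d}{\rightarrow}(\phi'(Y,X),W).
\end{equation*}

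Finally, since $W$ was an arbitrary bounded $\mathcal{F}$-measurable random variable, and a sequence $U_{n}$ converges stably to $U$ precisely when $(U_{n},W)\stackrel{d}{\rightarrow}(U,W)$ for every such $W$ (see \cite{Aldo}), the last display shows $r_{n}(\phi(Y_{n})-\phi(\bar{Y}_{n}))\stackrel{st}{\rightarrow}\phi'(Y,X)$, which is the assertion.

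The genuinely delicate point is conceptual rather than computational: one must keep the stable mode of convergence alive through an application of the extended continuous mapping theorem, whose non-standard hypothesis — convergence of $g_{n'}(x_{n'})$ along subsequences whenever the limit falls in the good set $\mathbb{D}_{0}$ — is supplied exactly by the Hadamard-type differentiability $(\ref{Hada})$; this forces the two-sided passage from stable to ordinary weak convergence, by freezing an $\mathcal{F}$-measurable test variable, and back to stable convergence, by letting that variable vary. The remaining ingredients — the tightness step giving $r_{n}^{-1}Z_{n}\stackrel{P}{\rightarrow}0$, the limiting realizations lying in $\mathbb{D}_{0}$ so that $g_{0}$ and the domain restriction in Theorem~\ref{Cotinuous mapping} are meaningful, and $\phi'(Y,X)$ being a bona fide separable random element on the extension — are routine but should be recorded.
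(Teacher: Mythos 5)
Your proposal is correct and follows essentially the same route as the paper's proof: deduce $\bar{Y}_{n}\stackrel{P}{\rightarrow}Y$ from $r_{n}^{-1}Z_{n}\stackrel{P}{\rightarrow}0$, upgrade to the joint stable convergence $(Z_{n},\bar{Y}_{n})\stackrel{st}{\rightarrow}(X,Y)$, and then feed the maps $g_{n}(\theta,h)=r_{n}(\phi(\theta+r_{n}^{-1}h)-\phi(\theta))$ into the extended continuous mapping theorem via the Hadamard-type condition $(\ref{Hada})$. The only difference is that you make explicit the passage from stable to ordinary weak convergence (by freezing a bounded $\mathcal{F}$-measurable test variable) and back, a bookkeeping step the paper applies implicitly; this is a welcome sharpening rather than a deviation.
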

\begin{proof}
Since $\sqrt{n}(Y_{n}-\bar{Y}_{n})\stackrel{st}{\rightarrow}X$ then $|Y_{n}-\bar{Y}_{n}|\stackrel{P}{\rightarrow}0$ and, given that $Y_{n}\stackrel{P}{\rightarrow}Y$, applying triangular inequality we can deduce that $\bar{Y}_{n}\stackrel{P}{\rightarrow}Y$. Hence, we have $(Y_{n},\bar{Y}_{n})\stackrel{P}{\rightarrow}(Y,Y)$, and using the properties of the stable convergence we have $(\bar{Y}_{n},r_{n}(Y_{n}-\bar{Y}_{n}))\stackrel{st}{\rightarrow}(Y,X)$.\\
Now, define for each $n$ a map $g_{n}(\theta_{n},h_{n}):=r_{n}(\phi(\theta_{n}+r_{n}^{-1}h_{n})-\phi(\theta_{n}))$ on $\mathbb{D}_{n}\times\mathbb{D}_{n}:=\{h_{n},\theta_{n}:\theta_{n}+r_{n}^{-1}h_{n}\in\mathbb{D}_{\phi}\}$. These maps are continuous in $\mathbb{E}$ since $\phi$ is continuous and by $(\ref{Hada})$ they converge to $\phi'(\theta,h)$. Applying the continuous mapping theorem, we obtain $g_{n}(\bar{Y}_{n},r_{n}(Y_{n}-\bar{Y}_{n}))\stackrel{st}{\rightarrow}\phi'(Y,X)$, which is our result.
\end{proof}
\noindent We can now present the main results of this section: the feasible WLLN and CLT.
\begin{pro}\label{NEW-Multi-LLN-ratio-corr}
Let $\epsilon>0$. Under the assumptions of Theorem \ref{NEW-I-Vech} and for any interval $[\epsilon,T]$,
\begin{equation}\label{NEW-ucp-corr-eq}
\left(\frac{\sum_{i=1}^{\lfloor nt\rfloor}\Delta^{n}_{i}Y^{(k)}\Delta^{n}_{i}Y^{(l)}}{\sqrt{\sum_{i=1}^{\lfloor nt\rfloor}\left(\Delta^{n}_{i}Y^{(k)}\right)^{2}}\sqrt{\sum_{i=1}^{\lfloor nt\rfloor}\left(\Delta^{n}_{i}Y^{(l)}\right)^{2}}}\right)_{k=1,...,p;l\leq k}\stackrel{u.c.p.}{\rightarrow}\left(\frac{\bar{R}^{(k,l)}_{t}}{\sqrt{\bar{R}^{(k,k)}_{t}\bar{R}^{(l,l)}_{t}}}\right)_{k=1,...,p;l\leq k}
\end{equation}
\end{pro}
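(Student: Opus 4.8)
The plan is to reduce $(\ref{NEW-ucp-corr-eq})$ to the (infeasible) weak law of large numbers of Theorem \ref{LLN} by observing that the scaling constants cancel inside the ratio, and then to transport the convergence through the map $(a,b,c)\mapsto a/\sqrt{bc}$ via the continuous mapping theorem. First I would set, for $k=1,\dots,p$ and $l\le k$,
\[
N^{(k,l)}_{n}(t):=\frac{1}{n}\sum_{i=1}^{\lfloor nt\rfloor}\frac{\Delta^{n}_{i}Y^{(k)}}{\bar\tau^{(k)}_{n}}\,\frac{\Delta^{n}_{i}Y^{(l)}}{\bar\tau^{(l)}_{n}}.
\]
Since $\sum_{i=1}^{\lfloor nt\rfloor}\Delta^{n}_{i}Y^{(k)}\Delta^{n}_{i}Y^{(l)}=n\,\bar\tau^{(k)}_{n}\bar\tau^{(l)}_{n}\,N^{(k,l)}_{n}(t)$ and $\sum_{i=1}^{\lfloor nt\rfloor}(\Delta^{n}_{i}Y^{(k)})^{2}=n\,(\bar\tau^{(k)}_{n})^{2}\,N^{(k,k)}_{n}(t)$, the $(k,l)$ entry of the statistic in $(\ref{NEW-ucp-corr-eq})$ is exactly $N^{(k,l)}_{n}(t)\big/\sqrt{N^{(k,k)}_{n}(t)\,N^{(l,l)}_{n}(t)}$: every power of $\bar\tau^{(k)}_{n}$, $\bar\tau^{(l)}_{n}$ and of $n$ cancels. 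This cancellation is precisely why feasible statements are available only in Case II.

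Next I would apply Theorem \ref{LLN}, which gives on $[\epsilon,T]$ the joint convergence $\big(N^{(k,l)}_{n}(\cdot)\big)_{k=1,\dots,p;\,l\le k}\stackrel{u.c.p.}{\rightarrow}\big(\bar R^{(k,l)}_{\cdot}\big)_{k=1,\dots,p;\,l\le k}$; in particular the diagonal entries $N^{(k,k)}_{n}(\cdot)$ converge to $\bar R^{(k,k)}_{\cdot}$, so the whole finite collection of numerators and denominators produced in the previous step converges simultaneously, as one $\mathcal{D}([\epsilon,T],\mathbb{R}^{\frac{p}{2}(p+1)})$-valued process. As noted just before the statement, $\bar R^{(k,k)}_{t}$ is nonnegative and, since for each $m$ its integrand $\sum_{r,q}\bar r_{k,r,m;k,q,m}(0)\sigma^{(r,m)}_{s}\sigma^{(q,m)}_{s}$ is a nonnegative quadratic form in $(\sigma^{(r,m)}_{s})_{r}$, nondecreasing in $t$; in the nondegenerate case it is strictly positive for $t>0$, hence on $[\epsilon,T]$ one has $\bar R^{(k,k)}_{t}\ge \bar R^{(k,k)}_{\epsilon}>0$ almost surely.

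Then I would invoke the continuous mapping theorem (Theorem \ref{Cotinuous mapping}), using that u.c.p. convergence on $[\epsilon,T]$ coincides with convergence in probability in $\mathcal{D}([\epsilon,T],\cdot)$ under the uniform metric. The relevant map is induced pointwise in $t$ by $\phi\big((a^{(k,l)})_{l\le k}\big):=\big(a^{(k,l)}/\sqrt{a^{(k,k)}a^{(l,l)}}\big)_{l\le k}$, which on finite-dimensional space is continuous, indeed uniformly continuous on compact subsets, of the region where all diagonal coordinates are positive. A continuous path on the compact interval $[\epsilon,T]$ whose diagonal coordinates never vanish takes values in a compact subset of that region, and along such paths the induced map is continuous for the uniform metric. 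By the previous paragraph the limit process $\big(\bar R^{(k,l)}_{\cdot}\big)_{l\le k}$ lies almost surely in this set of paths, so Theorem \ref{Cotinuous mapping} transfers the convergence and yields $(\ref{NEW-ucp-corr-eq})$.

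The main obstacle is this last step: $a/\sqrt{bc}$ is not globally continuous, so the continuous mapping theorem cannot be applied blindly. The care required is exactly to work on an interval $[\epsilon,T]$ bounded away from $0$, to establish the almost sure strict positivity — and hence compact containment — of the limiting diagonal processes $\bar R^{(k,k)}_{\cdot}$, and to verify that pointwise continuity together with uniform continuity on compacta upgrades to continuity of the induced map on the path space at every such limit path. Everything else, namely the algebraic identities of the first step and the appeal to Theorem \ref{LLN}, is routine.
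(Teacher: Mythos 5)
Your proposal is correct and follows essentially the same route as the paper: cancel the scaling constants $\bar{\tau}_{n}^{(k)}$ inside the ratio, invoke the weak law of large numbers of Theorem \ref{LLN}, and transfer the convergence through the map $(a,b,c)\mapsto a/\sqrt{bc}$, using $\epsilon>0$ to keep the diagonal limits bounded below by $\bar{R}^{(k,k)}_{\epsilon}>0$. The only difference is one of presentation: the paper carries out an explicit telescoping decomposition of the difference of ratios and treats each piece separately (applying the continuous mapping theorem only to $g(x,y)=\sqrt{xy}$ and bounding the remaining prefactors on $[\epsilon,T]$), whereas you apply the continuous mapping theorem once to the composite map on the region of positive diagonal coordinates — both arguments rest on exactly the same two ingredients.
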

\begin{proof}
Fix $l,k\in\{1,...,p\}$ such that $l\leq k$, we have for $n\geq1/t$ (the case $n<1/t$ is a trivial one and moreover we are concerned with the behaviour as $n\rightarrow\infty$)
\begin{equation*}
\left(\sum_{i=1}^{\lfloor nt\rfloor}\Delta^{n}_{i}Y^{(k)}\Delta^{n}_{i}Y^{(l)}\right)\left(\sum_{i=1}^{\lfloor nt\rfloor}\left(\Delta^{n}_{i}Y^{(k)}\right)^{2}\sum_{i=1}^{\lfloor nt\rfloor}\left(\Delta^{n}_{i}Y^{(l)}\right)^{2}\right)^{-1/2}-\frac{\bar{R}_{t}^{(k,l)}}{\sqrt{\bar{R}_{t}^{(k,k)}\bar{R}_{t}^{(l,l)}}}
\end{equation*}
\begin{equation*}
=\left[\left(\frac{1}{n}\sum_{i=1}^{\lfloor nt\rfloor}\dfrac{\Delta^{n}_{i}Y^{(k)}}{\bar{\tau}_{n}^{(k)}}\dfrac{\Delta^{n}_{i}Y^{(l)}}{\bar{\tau}_{n}^{(l)}}\right)-\bar{R}_{t}^{(k,l)}\right]\left(\frac{1}{n}\sum_{i=1}^{\lfloor nt\rfloor}\left(\frac{\Delta^{n}_{i}Y^{(k)}}{\bar{\tau}_{n}^{(k)}}\right)^{2}\frac{1}{n}\sum_{i=1}^{\lfloor nt\rfloor}\left(\frac{\Delta^{n}_{i}Y^{(l)}}{\bar{\tau}_{n}^{(l)}}\right)^{2}\right)^{-1/2}
\end{equation*}
\begin{equation*}
+\frac{\bar{R}_{t}^{(k,l)}\left(\frac{1}{n}\sum_{i=1}^{\lfloor nt\rfloor}\left(\frac{\Delta^{n}_{i}Y^{(k)}}{\bar{\tau}_{n}^{(k)}}\right)^{2}\frac{1}{n}\sum_{i=1}^{\lfloor nt\rfloor}\left(\frac{\Delta^{n}_{i}Y^{(l)}}{\bar{\tau}_{n}^{(l)}}\right)^{2}\right)^{-1/2}}{\sqrt{\bar{R}_{t}^{(k,k)}\bar{R}_{t}^{(l,l)}}}
\end{equation*}
\begin{equation*}
\left[\sqrt{\bar{R}_{t}^{(k,k)}\bar{R}_{t}^{(l,l)}}-\left(\frac{1}{n}\sum_{i=1}^{\lfloor nt\rfloor}\left(\frac{\Delta^{n}_{i}Y^{(k)}}{\bar{\tau}_{n}^{(k)}}\right)^{2}\frac{1}{n}\sum_{i=1}^{\lfloor nt\rfloor}\left(\frac{\Delta^{n}_{i}Y^{(l)}}{\bar{\tau}_{n}^{(l)}}\right)^{2}\right)^{1/2}\right]\stackrel{u.c.p.}{\rightarrow}0,
\end{equation*}
where for the term in the first square bracket the u.c.p. convergence to zero comes from the fact that we have LLN results (see Theorem \ref{LLN}). For the second square bracket we have the following. First, we use the continuous mapping theorem knowing the joint convergence in probability and using the continuous function $g(x,y)=\sqrt{xy}$ (notice that in our case $x,y$ are positive). Then, we pass from the convergence in probability to the uniform convergence using the fact that the paths are non-decreasing in time and the paths of the limiting process are continuous almost surely. Concerning the elements outside the square brackets, they do not interfere with the uniform convergence since their suprema are bounded for any $t\in[\epsilon,T]$ (and that is why we have considered $\epsilon>0$).
\\Finally, the joint convergence follows from Lemma \ref{lemma}.
\end{proof}
\begin{pro}\label{NEW-Feasible-corr}
Under the assumptions of Theorem \ref{NEW-I-Vech}, we have for any $\epsilon>0$
\begin{equation*}
\vast\{\sqrt{n}\left(\frac{\sum_{i=1}^{\lfloor nt\rfloor}\Delta^{n}_{i}Y^{(k)}\Delta^{n}_{i}Y^{(l)}}{\sqrt{\sum_{i=1}^{\lfloor nt\rfloor}\left(\Delta^{n}_{i}Y^{(k)}\right)^{2}}\sqrt{\sum_{i=1}^{\lfloor nt\rfloor}\left(\Delta^{n}_{i}Y^{(l)}\right)^{2}}}-\frac{\bar{R}_{t,n}^{(k,l)}}{\sqrt{\bar{R}_{t,n}^{(k,k)}\bar{R}_{t,n}^{(l,l)}}}\right)_{k=1,...,p;l\leq k}\vast\}_{t\in[\epsilon,T]}
\end{equation*}
\begin{equation*}
\stackrel{st}{\rightarrow}\vast\{\vast(\frac{1}{\sqrt{\bar{R}^{(k,k)}_{t}\bar{R}^{(l,l)}_{t}}}\Bigg(\int_{0}^{t}(V_{s}\textit{\textbf{D}}^{1/2})_{(k,l)}dB^{(k,l)}_{s}-\frac{1}{2}\frac{\bar{R}^{(k,l)}_{t}}{\bar{R}^{(k,k)}_{t}}\int_{0}^{t}(V_{s}\textit{\textbf{D}}^{1/2})_{(k,k)}dB^{(k,k)}_{s} 
\end{equation*}
\begin{equation*}
- \frac{1}{2}\frac{\bar{R}^{(k,l)}_{t}}{\bar{R}^{(l,l)}_{t}}\int_{0}^{t}(V_{s}\textit{\textbf{D}}^{1/2})_{(l,l)}dB^{(l,l)}_{s}\Bigg)\vast)_{k=1,...,p;l\leq k}\vast\}_{t\in[\epsilon,T]},
\end{equation*}
where $(V_{s}\textit{\textbf{D}}^{1/2})_{(k,l)}$ indicates that we are considering only the $(k,l)$ row of the matrix $(V_{s}\textit{\textbf{D}}^{1/2})$ and $B_{s}^{(k,l)}$ are one-dimensional Brownian motions independent from each other.
\end{pro}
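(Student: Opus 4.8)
The plan is to apply the generalised functional delta method of Lemma~\ref{survey} to the map $\phi(x,y,z):=x/\sqrt{yz}$, which is of class $C^{\infty}$ (hence continuous) on the open set $\{(x,y,z):y>0,\ z>0\}$. First I would note that the scaling factors $\bar\tau_n^{(k)},\bar\tau_n^{(l)}$ and the factors $1/n$ cancel in the ratio, so that for each pair $l\le k$
\begin{equation*}
\frac{\sum_{i=1}^{\lfloor nt\rfloor}\Delta^{n}_{i}Y^{(k)}\Delta^{n}_{i}Y^{(l)}}{\sqrt{\sum_{i=1}^{\lfloor nt\rfloor}(\Delta^{n}_{i}Y^{(k)})^{2}}\sqrt{\sum_{i=1}^{\lfloor nt\rfloor}(\Delta^{n}_{i}Y^{(l)})^{2}}}=\phi\!\left(Q^{(k,l)}_{t,n},Q^{(k,k)}_{t,n},Q^{(l,l)}_{t,n}\right),\qquad Q^{(a,b)}_{t,n}:=\frac1n\sum_{i=1}^{\lfloor nt\rfloor}\frac{\Delta^{n}_{i}Y^{(a)}}{\bar\tau_n^{(a)}}\frac{\Delta^{n}_{i}Y^{(b)}}{\bar\tau_n^{(b)}}.
\end{equation*}
Collecting these over all $l\le k$ produces a single $\mathcal{D}([\epsilon,T],\mathbb{R}^{\frac p2(p+1)})$-valued process $Y_n:=(Q^{(a,b)}_{t,n})_{b\le a}$, and I would take as centering $\bar Y_n:=(\bar R^{(a,b)}_{t,n})_{b\le a}$ and as ambient map $\Phi:=\big(\phi(\cdot^{(k,l)},\cdot^{(k,k)},\cdot^{(l,l)})\big)_{l\le k}$, so that $\Phi(Y_n)$ is the statistic on the left and $\Phi(\bar Y_n)$ is exactly the centering term $\big(\bar R^{(k,l)}_{t,n}/\sqrt{\bar R^{(k,k)}_{t,n}\bar R^{(l,l)}_{t,n}}\big)_{l\le k}$.

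Second, I would supply the two convergence inputs required by Lemma~\ref{survey}, with $r_n=\sqrt n$. Theorem~\ref{NEW-I-Vech}, which already contains the diagonal entries $k=l$ as a special case, gives $\sqrt n\,(Y_n-\bar Y_n)\stackrel{st}{\rightarrow}X$ in $\mathcal{D}([0,T],\mathbb{R}^{\frac p2(p+1)})$ with $X=\{\int_0^tV_s\textbf{D}^{1/2}dB_s\}$, whose $(a,b)$-th component I write $\int_0^t(V_s\textbf{D}^{1/2})_{(a,b)}dB^{(a,b)}_s$; restricting trajectories to $[\epsilon,T]$ (a continuous operation for the uniform metric) preserves this, and $X$ has continuous paths, so it takes values in the subspace $\mathbb{D}_0$ of continuous functions. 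Theorem~\ref{LLN} gives $Y_n\stackrel{u.c.p.}{\rightarrow}Y:=(\bar R^{(a,b)}_t)_{b\le a}$, and $\bar Y_n\to Y$ uniformly on $[\epsilon,T]$ because $\bar r^{(n)}_{k,m;l,w}(0)\to\bar r_{k,m;l,w}(0)$ and $t\mapsto\int_0^t\sigma^{(r,m)}_s\sigma^{(q,m)}_s\,ds$ is continuous. The hypothesis $\epsilon>0$ is used precisely here: since the quadratic forms $\bar R^{(k,k)}_t$ are strictly positive for $t>0$ in the non-degenerate case and non-decreasing in $t$, one has $\inf_{t\in[\epsilon,T]}\bar R^{(k,k)}_t>0$ a.s., so $Y$ lies in $\mathbb{D}_\phi\cap\mathbb{D}_0$ and $\phi$, acting pathwise by $(x(\cdot),y(\cdot),z(\cdot))\mapsto x(\cdot)/\sqrt{y(\cdot)z(\cdot)}$, is continuous for the uniform metric on the relevant subset of $\mathcal{D}([\epsilon,T],\mathbb{R}^3)$; a fortiori, for $n$ large $\bar Y_n$ and $Y_n$ (whose $(k,k)$-components are non-decreasing and a.s.\ strictly positive on $[\epsilon,T]$) also take values in $\mathbb{D}_\phi$.

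Third, I would verify the Hadamard-type condition $(\ref{Hada})$ for $\Phi$. Because $\phi$ is $C^1$ away from $\{y=0\}\cup\{z=0\}$ with gradient $\nabla\phi(x,y,z)=\big(\tfrac1{\sqrt{yz}},\,-\tfrac12\tfrac{x}{y\sqrt{yz}},\,-\tfrac12\tfrac{x}{z\sqrt{yz}}\big)$, a first-order Taylor expansion — uniformly valid in $t$ because the $y,z$ coordinates stay bounded away from $0$ on $[\epsilon,T]$ — gives, for $\theta_n\to\theta\in\mathbb{D}_0$ and $h_n\to h\in\mathbb{D}_0$, that $r_n(\Phi(\theta_n+r_n^{-1}h_n)-\Phi(\theta_n))\to\Phi'(\theta,h)$, whose $(k,l)$-component equals $\nabla\phi(\theta^{(k,l)},\theta^{(k,k)},\theta^{(l,l)})\cdot(h^{(k,l)},h^{(k,k)},h^{(l,l)})$. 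Lemma~\ref{survey} then yields $\sqrt n(\Phi(Y_n)-\Phi(\bar Y_n))\stackrel{st}{\rightarrow}\Phi'(Y,X)$, and substituting $\theta=Y$, $h=X$ into $\nabla\phi$ reproduces, component by component, exactly the announced limit
\begin{equation*}
\frac{1}{\sqrt{\bar R^{(k,k)}_t\bar R^{(l,l)}_t}}\left(\int_0^t(V_s\textbf{D}^{1/2})_{(k,l)}dB^{(k,l)}_s-\tfrac12\tfrac{\bar R^{(k,l)}_t}{\bar R^{(k,k)}_t}\int_0^t(V_s\textbf{D}^{1/2})_{(k,k)}dB^{(k,k)}_s-\tfrac12\tfrac{\bar R^{(k,l)}_t}{\bar R^{(l,l)}_t}\int_0^t(V_s\textbf{D}^{1/2})_{(l,l)}dB^{(l,l)}_s\right)_{l\le k},
\end{equation*}
the one-dimensional Brownian motions $B^{(k,l)}$ and their mutual independence being read off from the structure of $\textbf{D}$ and $V_s$ already described.

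The main obstacle is not the algebra but the functional bookkeeping: establishing that $\phi$ and the maps $g_n$ of Lemma~\ref{survey} are genuinely continuous on the chosen subsets of $\mathcal{D}([\epsilon,T],\cdot)$ for the uniform metric, and, crucially, that the lower bound $\inf_{t\in[\epsilon,T]}\bar R^{(k,k)}_t>0$ (and likewise for $l$) holds almost surely so that $Y$, and with it $\bar Y_n$ and $Y_n$ for large $n$, remain inside $\mathbb{D}_\phi$; this is exactly where $\epsilon>0$ and the non-degeneracy of the quadratic forms $\bar R^{(k,k)}_t$ noted before the statement enter. Once these domain issues are settled, the Taylor estimate and reading off the gradient are routine, and the joint convergence over all pairs $l\le k$ is automatic since $\Phi$ is treated as a single map into $\mathbb{R}^{\frac p2(p+1)}$.
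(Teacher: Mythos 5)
Your argument is correct and reaches the stated limit; the only difference from the paper is organisational. You apply the functional delta method (Lemma \ref{survey}) once, to the full ratio map $\phi(x,y,z)=x/\sqrt{yz}$, with inputs $Y_n=(Q^{(a,b)}_{t,n})_{b\le a}$ and $\bar Y_n=(\bar R^{(a,b)}_{t,n})_{b\le a}$, and read the limit off the gradient $\nabla\phi$. The paper instead factors the difference as a product $Z^{(k,l)}_{1,n}Z^{(k,l)}_{2,n}$, where $Z^{(k,l)}_{1,n}$ (which carries the reciprocal square roots) converges u.c.p.\ by the law of large numbers, and only the map $(x_1,x_2,x_3)\mapsto(x_1,\sqrt{x_2x_3})$ is fed into Lemma \ref{survey} to get the stable limit of $Z^{(k,l)}_{2,n}$; the two are then recombined with the continuous mapping theorem for stable convergence. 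Your route is more compact and yields the same derivative, but it shifts the entire burden of the division onto the Hadamard-differentiability check: you must verify, as you indicate, that $\phi$ and the maps $g_n$ are continuous and that the Taylor expansion is uniform on $[\epsilon,T]$, which requires $\inf_{t\in[\epsilon,T]}\bar R^{(k,k)}_t>0$ a.s.\ and the analogous a.s.\ positivity of the realised variances for large $n$ — precisely the points where $\epsilon>0$ and the non-degeneracy of the quadratic forms enter, and which the paper's factorisation isolates into the innocuous u.c.p.\ prefactor. The joint convergence over all pairs $l\le k$ is handled identically in both versions (a single vector-valued map versus the paper's $\Theta_n$ and $\tilde{\mathbf{g}}$ construction), and both treatments of the "independent" one-dimensional Brownian motions $B^{(k,l)}$ are the same representational shorthand for the components of $\int_0^t V_s\textit{\textbf{D}}^{1/2}dB_s$.
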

\begin{proof}First we prove the statement for fixed $k$ and $l$. As in the previous proof we concentrate on the case $n>1/t$. We have
\begin{equation*}
\sqrt{n}\left[\left(\sum_{i=1}^{\lfloor nt\rfloor}\Delta^{n}_{i}Y^{(k)}\Delta^{n}_{i}Y^{(l)}\right)\left(\sum_{i=1}^{\lfloor nt\rfloor}\left(\Delta^{n}_{i}Y^{(k)}\right)^{2}\sum_{i=1}^{\lfloor nt\rfloor}\left(\Delta^{n}_{i}Y^{(l)}\right)^{2}\right)^{-1/2}-\frac{\bar{R}_{t,n}^{(k,l)}}{\sqrt{\bar{R}_{t,n}^{(k,k)}\bar{R}_{t,n}^{(l,l)}}}\right]
\end{equation*}
\begin{equation*}
=\frac{\sqrt{n}\left(\frac{1}{n}\sum_{i=1}^{\lfloor nt\rfloor}\frac{\Delta^{n}_{i}Y^{(k)}}{\bar{\tau}_{n}^{(k)}}\frac{\Delta^{n}_{i}Y^{(l)}}{\bar{\tau}_{n}^{(l)}}-\bar{R}_{t,n}^{(k,l)}\right)}{\sqrt{\bar{R}_{t,n}^{(k,k)}\bar{R}_{t,n}^{(l,l)}}}
-\frac{\sqrt{n}\left(\frac{1}{n}\sum_{i=1}^{\lfloor nt\rfloor}\frac{\Delta^{n}_{i}Y^{(k)}}{\bar{\tau}_{n}^{(k)}}\frac{\Delta^{n}_{i}Y^{(l)}}{\bar{\tau}_{n}^{(l)}}\right)}{\sqrt{\bar{R}_{t,n}^{(k,k)}\bar{R}_{t,n}^{(l,l)}\left(\frac{1}{n}\sum_{i=1}^{\lfloor nt\rfloor}\left(\frac{\Delta^{n}_{i}Y^{(k)}}{\bar{\tau}_{n}^{(k)}}\right)^{2}\frac{1}{n}\sum_{i=1}^{\lfloor nt\rfloor}\left(\frac{\Delta^{n}_{i}Y^{(l)}}{\bar{\tau}_{n}^{(l)}}\right)^{2}\right)}}
\end{equation*}
\begin{equation*}
\left(\left(\frac{1}{n}\sum_{i=1}^{\lfloor nt\rfloor}\left(\dfrac{\Delta^{n}_{i}Y^{(k)}}{\bar{\tau}_{n}^{(k)}}\right)^{2}\frac{1}{n}\sum_{i=1}^{\lfloor nt\rfloor}\left(\dfrac{\Delta^{n}_{i}Y^{(l)}}{\bar{\tau}_{n}^{(l)}}\right)^{2}\right)^{1/2}-\sqrt{\bar{R}_{t,n}^{(k,k)}\bar{R}_{t,n}^{(l,l)}}\right),
\end{equation*}
\begin{equation*}
=\frac{\sqrt{n}}{\sqrt{\bar{R}_{t,n}^{(k,k)}\bar{R}_{t,n}^{(l,l)}}}\left(1,-\left(\frac{1}{n}\sum_{i=1}^{\lfloor nt\rfloor}\dfrac{\Delta^{n}_{i}Y^{(k)}}{\bar{\tau}_{n}^{(k)}}\dfrac{\Delta^{n}_{i}Y^{(l)}}{\bar{\tau}_{n}^{(l)}}\right)\left(\frac{1}{n}\sum_{i=1}^{\lfloor nt\rfloor}\left(\dfrac{\Delta^{n}_{i}Y^{(k)}}{\bar{\tau}_{n}^{(k)}}\right)^{2}\frac{1}{n}\sum_{i=1}^{\lfloor nt\rfloor}\left(\dfrac{\Delta^{n}_{i}Y^{(l)}}{\bar{\tau}_{n}^{(l)}}\right)^{2}\right)^{-1/2}\right)
\end{equation*}
\begin{equation*}
\left(\left(\frac{1}{n}\sum_{i=1}^{\lfloor nt\rfloor}\dfrac{\Delta^{n}_{i}Y^{(k)}}{\bar{\tau}_{n}^{(k)}}\dfrac{\Delta^{n}_{i}Y^{(l)}}{\bar{\tau}_{n}^{(l)}}\right)-\bar{R}_{t,n}^{(k,l)},
\left(\frac{1}{n}\sum_{i=1}^{\lfloor nt\rfloor}\left(\dfrac{\Delta^{n}_{i}Y^{(k)}}{\bar{\tau}_{n}^{(k)}}\right)^{2}\frac{1}{n}\sum_{i=1}^{\lfloor nt\rfloor}\left(\dfrac{\Delta^{n}_{i}Y^{(l)}}{\bar{\tau}_{n}^{(l)}}\right)^{2}\right)^{1/2}-\sqrt{\bar{R}_{t,n}^{(k,k)}\bar{R}_{t,n}^{(l,l)}}\right)^{\top}.
\end{equation*}
Notice that
\begin{equation*}
Z^{(k,l)}_{1,n}:=\frac{1}{\sqrt{\bar{R}_{t,n}^{(k,k)}\bar{R}_{t,n}^{(l,l)}}}\left(1,-\left(\frac{1}{n}\sum_{i=1}^{\lfloor nt\rfloor}\dfrac{\Delta^{n}_{i}Y^{(k)}}{\bar{\tau}_{n}^{(k)}}\dfrac{\Delta^{n}_{i}Y^{(l)}}{\bar{\tau}_{n}^{(l)}}\right)\left(\frac{1}{n}\sum_{i=1}^{\lfloor nt\rfloor}\left(\dfrac{\Delta^{n}_{i}Y^{(k)}}{\bar{\tau}_{n}^{(k)}}\right)^{2}\frac{1}{n}\sum_{i=1}^{\lfloor nt\rfloor}\left(\dfrac{\Delta^{n}_{i}Y^{(l)}}{\bar{\tau}_{n}^{(l)}}\right)^{2}\right)^{-1/2}\right)
\end{equation*}
\begin{equation*}
\stackrel{u.c.p.}{\rightarrow}\frac{1}{\sqrt{\bar{R}^{(k,k)}_{t}\bar{R}^{(l,l)}_{t}}}\left(1,-\frac{\bar{R}^{(k,l)}_{t}}{\sqrt{\bar{R}^{(k,k)}_{t}\bar{R}^{(l,l)}_{t}}}\right)=:Z^{(k,l)}_{1},
\end{equation*}
by Proposition \ref{Multi-LLN-ratio-corr} and by noticing that for any $\delta>0$
\begin{equation*}
\mathbb{P}\left(\sup\limits_{t\in[\epsilon,T]}\left(\sqrt{\bar{R}^{(k,k)}_{t}\bar{R}^{(l,l)}_{t}}\right)^{-1}>\delta\right)=\mathbb{P}\left(\left(\sqrt{\bar{R}^{(k,k)}_{\epsilon}\bar{R}^{(l,l)}_{\epsilon}}\right)^{-1}>\delta\right).
\end{equation*}For the other term, by Theorem \ref{I-Vech-2} and Lemma \ref{survey} applied to $\textbf{g}:\mathcal{D}([\epsilon,T],\mathbb{R}\times(0,\infty)^{2})\rightarrow\mathcal{D}([\epsilon,T],\mathbb{R}\times(0,\infty))$ (where both Skorokhod spaces as well as the Euclidean spaces are equipped with the uniform metric) defined as $\mathbf{g}(\{\textbf{x}_{t}\}_{t\in[\epsilon,T]})=\{g(x_{1,t},x_{2,t},x_{3,t})\}_{t\in[\epsilon,T]}=\{(x_{1,t},\sqrt{x_{2,t}x_{3,t}})\}_{t\in[\epsilon,T]}$ we have that
\begin{equation*}
Z^{(k,l)}_{2,n}:=\vast\{\sqrt{n}
\vast(\left(\frac{1}{n}\sum_{i=1}^{\lfloor nt\rfloor}\dfrac{\Delta^{n}_{i}Y^{(k)}}{\bar{\tau}_{n}^{(k)}}\dfrac{\Delta^{n}_{i}Y^{(l)}}{\bar{\tau}_{n}^{(l)}}\right)-\bar{R}_{t,n}^{(k,l)},
\end{equation*}
\begin{equation*}
\left(\frac{1}{n}\sum_{i=1}^{\lfloor nt\rfloor}\left(\dfrac{\Delta^{n}_{i}Y^{(k)}}{\bar{\tau}_{n}^{(k)}}\right)^{2}\frac{1}{n}\sum_{i=1}^{\lfloor nt\rfloor}\left(\dfrac{\Delta^{n}_{i}Y^{(l)}}{\bar{\tau}_{n}^{(l)}}\right)^{2}\right)^{1/2}-\sqrt{\bar{R}_{t,n}^{(k,k)}\bar{R}_{t,n}^{(l,l)}}\vast)^{\top}\vast\}_{t\in[\epsilon,T]}
\end{equation*}
\begin{equation*}
\stackrel{st}{\rightarrow}\vast\{\begin{pmatrix}
   1 & 0 & 0  \\
   0 & \frac{1}{2}\sqrt{\frac{\bar{R}_{t}^{(l,l)}}{\bar{R}_{t}^{(k,k)}}} & \frac{1}{2}\sqrt{\frac{\bar{R}_{t}^{(k,k)}}{\bar{R}_{t}^{(l,l)}}}\\
   
  \end{pmatrix}
  \end{equation*}
  \begin{equation*}
  \left(\int_{0}^{t}(V_{s}\textit{\textbf{D}}^{1/2})_{(k,l)}dB^{(k,l)}_{s},\int_{0}^{t}(V_{s}\textit{\textbf{D}}^{1/2})_{(k,k)}dB^{(k,k)}_{s},\int_{0}^{t}(V_{s}\textit{\textbf{D}}^{1/2})_{(l,l)}dB^{(l,l)}_{s}\right)^{\top}\vast\}_{t\in[\epsilon,T]}
\end{equation*}
\begin{equation*}
=\vast\{\begin{pmatrix}
    \int_{0}^{t}(V_{s}\textit{\textbf{D}}^{1/2})_{(k,l)}dB^{(k,l)}_{s} \\
    \frac{1}{2}\sqrt{\frac{\bar{R}^{(l,l)}_{t}}{\bar{R}^{(k,k)}_{t}}}\int_{0}^{t}(V_{s}\textit{\textbf{D}}^{1/2})_{(k,k)}dB^{(k,k)}_{s} + \frac{1}{2}\sqrt{\frac{\bar{R}^{(k,k)}_{t}}{\bar{R}^{(l,l)}_{t}}}\int_{0}^{t}(V_{s}\textit{\textbf{D}}^{1/2})_{(l,l)}dB^{(l,l)}_{s}\\
   
  \end{pmatrix}\vast\}_{t\in[\epsilon,T]}=:Z^{(k,l)}_{2},
\end{equation*}
where $B_{s}^{(k,l)},B_{s}^{(k,k)}$ and $B_{s}^{(l,l)}$ are three independent Brownian motions. Notice that we have not investigated whether such functional $\textbf{g}$ satisfies the conditions of Lemma \ref{survey}. We do it now. First, we check that $\textbf{g}$ is continuous and then, using the notations of Lemma \ref{survey}, that $\textbf{g}'(\{\theta\}_{t\in[\epsilon,T]},\{h\}_{t\in[\epsilon,T]})=\{\nabla g(\theta_{t})h_{t}\}_{t\in[\epsilon,T]}$, where $\nabla g$ is the Jacobian matrix of $g$. In particular, for the continuity we need to show that for every $(\{(x_{n,t}^{(1)},x_{n,t}^{(2)},x_{n,t}^{(3)})\}_{t\in[\epsilon,T]})_{n\in\mathbb{N}}\rightarrow\{(x_{t}^{(1)},x_{t}^{(2)},x_{t}^{(3)})\}_{t\in[\epsilon,T]}$ in $\mathcal{D}([\epsilon,T],\mathbb{R}\times(0,\infty)^{2})$ we have
\begin{equation*}
\lim\limits_{n\rightarrow\infty}\sup\limits_{t\in[\epsilon,T]}\left\|\left(x_{n,t}^{(1)}-x_{t}^{(1)},\sqrt{x_{n,t}^{(2)}x_{n,t}^{(3)}}-\sqrt{x_{t}^{(2)}x_{t}^{(3)}}\right)\right\|_{\infty}=0.
\end{equation*}
For the first component it is straightforward, while for the second we have
\begin{equation*}
\sup\limits_{t\in[\epsilon,T]}\left|\sqrt{x_{n,t}^{(2)}x_{n,t}^{(3)}}-\sqrt{x_{t}^{(2)}x_{t}^{(3)}}\right|\leq\sup\limits_{t\in[\epsilon,T]}\left|\sqrt{x_{n,t}^{(2)}}\left(\sqrt{x_{n,t}^{(3)}}-\sqrt{x_{t}^{(3)}}\right)\right|+\left|\sqrt{x_{t}^{(3)}}\left(\sqrt{x_{n,t}^{(2)}}-\sqrt{x_{t}^{(2)}}\right)\right|\rightarrow 0.
\end{equation*}
Regarding the map $\textbf{g}'$, we need to show that 
\begin{equation*}
\lim\limits_{n\rightarrow\infty}\sup\limits_{t\in[\epsilon,T]}\Bigg\|\Bigg(r_{n}\left(\theta_{n,t}^{(1)}+r_{n}^{-1}h_{n,t}^{(1)}-\theta_{n,t}^{(1)}\right)-h^{(1)}_{t},
\end{equation*}
\begin{equation}\label{bobo}
r_{n}\sqrt{(\theta_{n,t}^{(2)}+r_{n}^{-1}h_{n,t}^{(2)})(\theta_{n,t}^{(3)}+r_{n}^{-1}h_{n,t}^{(3)})}-r_{n}\sqrt{\theta^{(2)}_{n,t}\theta^{(3)}_{n,t}}-\frac{h_{t}^{(2)}}{2}\sqrt{\frac{\theta_{t}^{(3)}}{\theta_{t}^{(2)}}}-\frac{h_{t}^{(3)}}{2}\sqrt{\frac{\theta_{t}^{(2)}}{\theta_{t}^{(3)}}}\Bigg)\Bigg\|_{\infty}=0.
\end{equation}
For the first component it is straightforward since $h^{(n)}\rightarrow h$, while for the second using the Taylor series we have
\begin{equation*}
\sup\limits_{t\in[\epsilon,T]}\Bigg|r_{n}\sqrt{(\theta_{n,t}^{(2)}+r_{n}^{-1}h_{n,t}^{(2)})(\theta_{n,t}^{(3)}+r_{n}^{-1}h_{n,t}^{(3)})}-r_{n}\sqrt{\theta^{(2)}_{n,t}\theta^{(3)}_{n,t}}-\frac{h_{t}^{(2)}}{2}\sqrt{\frac{\theta_{t}^{(3)}}{\theta_{t}^{(2)}}}-\frac{h_{t}^{(3)}}{2}\sqrt{\frac{\theta_{t}^{(2)}}{\theta_{t}^{(3)}}}\Bigg|
\end{equation*}
\begin{equation}\label{bo}
=\sup\limits_{t\in[\epsilon,T]}\Bigg|\frac{h_{n,t}^{(2)}}{2}\sqrt{\frac{\theta_{n,t}^{(3)}}{\theta_{n,t}^{(2)}}}+\frac{h_{n,t}^{(3)}}{2}\sqrt{\frac{\theta_{n,t}^{(2)}}{\theta_{n,t}^{(3)}}}+o(r_{n}^{-1})-\frac{h_{t}^{(2)}}{2}\sqrt{\frac{\theta_{t}^{(3)}}{\theta_{t}^{(2)}}}-\frac{h_{t}^{(3)}}{2}\sqrt{\frac{\theta_{t}^{(2)}}{\theta_{t}^{(3)}}}\Bigg|.
\end{equation}
Now, observe that
\begin{equation*}
\Bigg|\frac{h_{n,t}^{(2)}}{2}\sqrt{\frac{\theta_{n,t}^{(3)}}{\theta_{n,t}^{(2)}}}-\frac{h_{t}^{(2)}}{2}\sqrt{\frac{\theta_{t}^{(3)}}{\theta_{t}^{(2)}}}\Bigg|\leq \Bigg|\frac{h_{n,t}^{(2)}}{2}\left(\sqrt{\frac{\theta_{n,t}^{(3)}}{\theta_{n,t}^{(2)}}}-\sqrt{\frac{\theta_{t}^{(3)}}{\theta_{t}^{(2)}}}\right)\Bigg|+\Bigg|\sqrt{\frac{\theta_{t}^{(3)}}{\theta_{t}^{(2)}}}\left(\frac{h_{n,t}^{(2)}}{2}-\frac{h_{t}^{(2)}}{2}\right)\Bigg|,
\end{equation*}
\begin{equation*}
\sqrt{\frac{\theta_{n,t}^{(3)}}{\theta_{n,t}^{(2)}}}-\sqrt{\frac{\theta_{t}^{(3)}}{\theta_{t}^{(2)}}}=\frac{1}{\sqrt{\theta_{t}^{(2)}}}\left(\sqrt{\theta_{n,t}^{(3)}}-\sqrt{\theta_{t}^{(3)}}\right)+\frac{1}{\sqrt{\theta_{t}^{(2)}}}\sqrt{\frac{\theta_{n,t}^{(3)}}{\theta_{n,t}^{(2)}}}\left(\sqrt{\theta_{t}^{(2)}}-\sqrt{\theta_{n,t}^{(2)}}\right),
\end{equation*}
and that $\sup\limits_{t\in[\epsilon,T]}\bigg|\frac{1}{\sqrt{\theta_{t}^{(2)}}}\bigg|<\infty$ because $\theta_{t}^{(2)}$ takes values in $(0,\infty)$ and because it is a c\`{a}dl\`{a}g function (and the same hold for the other $\theta$s). Then, taking the limit as $n\rightarrow\infty$ in $(\ref{bo})$ we obtain the desired result $(\ref{bobo})$.

Furthermore, since $Z^{(k,l)}_{1,n}\stackrel{P}{\rightarrow}Z^{(k,l)}_{1}$ and $Z^{(k,l)}_{2,n}\stackrel{st}{\rightarrow}Z^{(k,l)}_{2}$ we deduce that $(Z^{(k,l)}_{1,n},Z^{(k,l)}_{2,n})\stackrel{st}{\rightarrow}(Z^{(k,l)}_{1},Z^{(k,l)}_{2})$. Finally, by applying the continuous mapping theorem for the stable convergence using the continuous function $f(Z^{(k,l)}_{1,n},Z^{(k,l)}_{2,n})=\{Z^{(k,l)}_{1,n}(t)Z^{(k,l)}_{2,n}(t)\}_{t\in[\epsilon,T]}$ we obtain our result for fixed $k$ and $l$.

For the joint stable convergence we proceed similarly thanks to the uniform metric. Let
\begin{equation*}
\Theta_{n}:=\Vast\{\begin{pmatrix}
Z^{(1,1)}_{1,n}(t) & 0 &\cdots  & 0 \\ 0 &
\ddots &  & \vdots \\ \vdots &  & \ddots & 0\\ 0& \cdots& 0 & Z^{(p,p)}_{1,n} (t) \end{pmatrix}\Vast\}_{t\in[\epsilon,T]}.
\end{equation*}
Then,
\begin{equation*}
\vast\{\sqrt{n}\left(\frac{\sum_{i=1}^{\lfloor nt\rfloor}\Delta^{n}_{i}Y^{(k)}\Delta^{n}_{i}Y^{(l)}}{\sqrt{\sum_{i=1}^{\lfloor nt\rfloor}\left(\Delta^{n}_{i}Y^{(k)}\right)^{2}}\sqrt{\sum_{i=1}^{\lfloor nt\rfloor}\left(\Delta^{n}_{i}Y^{(l)}\right)^{2}}}-\frac{\bar{R}_{t,n}^{(k,l)}}{\sqrt{\bar{R}_{t,n}^{(k,k)}\bar{R}_{t,n}^{(l,l)}}}\right)_{k=1,...,p;l\leq k}\vast\}_{t\in[\epsilon,T]}
\end{equation*}
\begin{equation*}
=\left\{\Theta_{n}(t)\left(Z^{(1,1)}_{2,n}(t),...,Z^{(p,p)}_{2,n}(t)\right)^{\top}\right\}_{t\in[\epsilon,T]}.
\end{equation*}
Using Lemma \ref{lemma} and the arguments used before for fixed $k$ and $l$ we obtain the u.c.p. convergence of $\Theta_{n}$. Now, we would like to prove the stable convergence for $\Big\{\left(Z^{(1,1)}_{2,n}(t),...,Z^{(p,p)}_{2,n}(t)\right)^{\top}\Big\}_{t\in[\epsilon,T]}$. Define the function $\mathbf{\tilde{g}}:\mathcal{D}([\epsilon,T],(0,\infty)^{p}\times\mathbb{R}^{\frac{p}{2}(p-1)})\rightarrow\mathcal{D}([\epsilon,T],(0,\infty)^{\frac{p}{2}(p+1)}\times\mathbb{R}^{\frac{p}{2}(p+1)})$ as (using our variables)
\begin{equation*}
\mathbf{\tilde{g}}\left(\left\{\left( \frac{1}{n}\sum_{i=1}^{\lfloor nt\rfloor}\dfrac{\Delta^{n}_{i}Y^{(k)}}{\bar{\tau}_{n}^{(k)}}\dfrac{\Delta^{n}_{i}Y^{(l)}}{\bar{\tau}_{n}^{(l)}}\right)_{k=1,...,p;l\leq k}\right\}_{t\in[\epsilon,T]}\right)
\end{equation*}
\begin{equation*}
=\vast\{\vast(
\frac{1}{n}\sum_{i=1}^{\lfloor nt\rfloor}\left(\frac{\Delta^{n}_{i}Y^{(1)}}{\bar{\tau}_{n}^{(1)}}\right)^{2},\sqrt{\left(\frac{1}{n}\sum_{i=1}^{\lfloor nt\rfloor}\left(\frac{\Delta^{n}_{i}Y^{(1)}}{\bar{\tau}_{n}^{(1)}}\right)^{2}\right)^{2}},\frac{1}{n}\sum_{i=1}^{\lfloor nt\rfloor}\dfrac{\Delta^{n}_{i}Y^{(2)}}{\bar{\tau}_{n}^{(2)}}\dfrac{\Delta^{n}_{i}Y^{(1)}}{\bar{\tau}_{n}^{(1)}},
\end{equation*}
\begin{equation*}
\sqrt{\frac{1}{n}\sum_{i=1}^{\lfloor nt\rfloor}\left(\frac{\Delta^{n}_{i}Y^{(2)}}{\bar{\tau}_{n}^{(2)}}\right)^{2}\frac{1}{n}\sum_{i=1}^{\lfloor nt\rfloor}\left(\frac{\Delta^{n}_{i}Y^{(1)}}{\bar{\tau}_{n}^{(1)}}\right)^{2}},\frac{1}{n}\sum_{i=1}^{\lfloor nt\rfloor}\left(\frac{\Delta^{n}_{i}Y^{(2)}}{\bar{\tau}_{n}^{(2)}}\right)^{2},\sqrt{\left(\frac{1}{n}\sum_{i=1}^{\lfloor nt\rfloor}\left(\frac{\Delta^{n}_{i}Y^{(2)}}{\bar{\tau}_{n}^{(2)}}\right)^{2}\right)^{2}},
\end{equation*}
\begin{equation*}
\frac{1}{n}\sum_{i=1}^{\lfloor nt\rfloor}\dfrac{\Delta^{n}_{i}Y^{(3)}}{\bar{\tau}_{n}^{(3)}}\dfrac{\Delta^{n}_{i}Y^{(1)}}{\bar{\tau}_{n}^{(1)}},\sqrt{\frac{1}{n}\sum_{i=1}^{\lfloor nt\rfloor}\left(\frac{\Delta^{n}_{i}Y^{(3)}}{\bar{\tau}_{n}^{(3)}}\right)^{2}\frac{1}{n}\sum_{i=1}^{\lfloor nt\rfloor}\left(\frac{\Delta^{n}_{i}Y^{(1)}}{\bar{\tau}_{n}^{(1)}}\right)^{2}},\frac{1}{n}\sum_{i=1}^{\lfloor nt\rfloor}\dfrac{\Delta^{n}_{i}Y^{(3)}}{\bar{\tau}_{n}^{(3)}}\dfrac{\Delta^{n}_{i}Y^{(2)}}{\bar{\tau}_{n}^{(2)}},
\end{equation*}
\begin{equation*}
\sqrt{\frac{1}{n}\sum_{i=1}^{\lfloor nt\rfloor}\left(\frac{\Delta^{n}_{i}Y^{(3)}}{\bar{\tau}_{n}^{(3)}}\right)^{2}\frac{1}{n}\sum_{i=1}^{\lfloor nt\rfloor}\left(\frac{\Delta^{n}_{i}Y^{(2)}}{\bar{\tau}_{n}^{(2)}}\right)^{2}},...,\frac{1}{n}\sum_{i=1}^{\lfloor nt\rfloor}\left(\frac{\Delta^{n}_{i}Y^{(p)}}{\bar{\tau}_{n}^{(p)}}\right)^{2},\sqrt{\left(\frac{1}{n}\sum_{i=1}^{\lfloor nt\rfloor}\left(\frac{\Delta^{n}_{i}Y^{(p)}}{\bar{\tau}_{n}^{(p)}}\right)^{2}\right)^{2}}\vast)^{\top}\vast\}_{t\in[\epsilon,T]}.
\end{equation*}
Then, using the same arguments used for fixed $k$ and $l$ we obtain the stated result.
\end{proof}
Similar results can be obtained for the second scenario of Case II.
\begin{pro}\label{Multi-LLN-ratio-corr}
Let $\epsilon>0$. Under the assumptions of Theorem \ref{I-Vech-2} and for any interval $[\epsilon,T]$,
\begin{equation*}
\left(\frac{\sum_{i=1}^{\lfloor nt\rfloor}\Delta^{n}_{i}X^{(k)}\Delta^{n}_{i}X^{(l)}}{\sqrt{\sum_{i=1}^{\lfloor nt\rfloor}\left(\Delta^{n}_{i}X^{(k)}\right)^{2}}\sqrt{\sum_{i=1}^{\lfloor nt\rfloor}\left(\Delta^{n}_{i}X^{(l)}\right)^{2}}}\right)_{k=1,...,p;l\leq k}\stackrel{u.c.p.}{\rightarrow}\left(\frac{\tilde{R}^{(k,l)}_{t}}{\sqrt{\tilde{R}^{(k,k)}_{t}\tilde{R}^{(l,l)}_{t}}}\right)_{k=1,...,p;l\leq k}.
\end{equation*}
\end{pro}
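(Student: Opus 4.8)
The plan is to follow the proof of Proposition \ref{NEW-Multi-LLN-ratio-corr} line by line, simply replacing the process $\textbf{Y}$, the scaling $\bar{\tau}_{n}$ and the limits $\bar{R}$ by $\textbf{X}$, $\tilde{\tau}_{n}$ and $\tilde{R}$, and invoking Theorem \ref{I-Vech-2} in place of Theorem \ref{NEW-I-Vech}. First I would fix $k,l\in\{1,\dots,p\}$ with $l\le k$ and, for $n\ge 1/t$, rewrite the ratio by dividing numerator and denominator by $n\,\tilde{\tau}_{n}^{(k)}\tilde{\tau}_{n}^{(l)}$, so that both the unknown scalings and the factor $1/n$ cancel:
\begin{equation*}
\frac{\sum_{i=1}^{\lfloor nt\rfloor}\Delta^{n}_{i}X^{(k)}\Delta^{n}_{i}X^{(l)}}{\sqrt{\sum_{i=1}^{\lfloor nt\rfloor}(\Delta^{n}_{i}X^{(k)})^{2}}\sqrt{\sum_{i=1}^{\lfloor nt\rfloor}(\Delta^{n}_{i}X^{(l)})^{2}}}=\frac{\frac{1}{n}\sum_{i=1}^{\lfloor nt\rfloor}\frac{\Delta^{n}_{i}X^{(k)}}{\tilde{\tau}_{n}^{(k)}}\frac{\Delta^{n}_{i}X^{(l)}}{\tilde{\tau}_{n}^{(l)}}}{\sqrt{\frac{1}{n}\sum_{i=1}^{\lfloor nt\rfloor}\left(\frac{\Delta^{n}_{i}X^{(k)}}{\tilde{\tau}_{n}^{(k)}}\right)^{2}}\sqrt{\frac{1}{n}\sum_{i=1}^{\lfloor nt\rfloor}\left(\frac{\Delta^{n}_{i}X^{(l)}}{\tilde{\tau}_{n}^{(l)}}\right)^{2}}}.
\end{equation*}

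Next I would invoke the weak law of large numbers associated with Theorem \ref{I-Vech-2}, which is obtained verbatim from the argument of Theorem \ref{LLN}: the scaled statistic converges in distribution, hence by Slutsky's theorem the centred statistic converges to zero in probability, and this is upgraded to u.c.p.\ using that the partial sums are non-decreasing in $t$ while the limit $t\mapsto\int_{0}^{t}\sigma^{(k,m)}_{s}\sigma^{(l,m)}_{s}\,ds$ has almost surely continuous paths. Applied to the covariation and to the two quadratic-variation statistics this gives
\begin{equation*}
\frac{1}{n}\sum_{i=1}^{\lfloor nt\rfloor}\frac{\Delta^{n}_{i}X^{(k)}}{\tilde{\tau}_{n}^{(k)}}\frac{\Delta^{n}_{i}X^{(l)}}{\tilde{\tau}_{n}^{(l)}}\stackrel{u.c.p.}{\rightarrow}\tilde{R}^{(k,l)}_{t},\qquad \frac{1}{n}\sum_{i=1}^{\lfloor nt\rfloor}\left(\frac{\Delta^{n}_{i}X^{(j)}}{\tilde{\tau}_{n}^{(j)}}\right)^{2}\stackrel{u.c.p.}{\rightarrow}\tilde{R}^{(j,j)}_{t}\quad (j=k,l),
\end{equation*}
and by Lemma \ref{lemma} these hold jointly. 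I would then restrict to $[\epsilon,T]$ and apply the continuous mapping theorem (Theorem \ref{Cotinuous mapping}) on the Skorokhod space with the uniform metric, with the map $g(x,y,z)=x/\sqrt{yz}$, which is continuous on $\mathbb{R}\times(0,\infty)^{2}$; this yields the u.c.p.\ convergence of the ratio to $\tilde{R}^{(k,l)}_{t}/\sqrt{\tilde{R}^{(k,k)}_{t}\tilde{R}^{(l,l)}_{t}}$ for fixed $k,l$. A last application of Lemma \ref{lemma} over all pairs $k=1,\dots,p$, $l\le k$ delivers the joint convergence in $\mathcal{D}([\epsilon,T],\mathbb{R}^{\frac{p}{2}(p+1)})$.

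The one genuine point of care — and hence the main obstacle — is the uniform-in-time control of the denominator. One must both promote convergence in probability of the quadratic-variation statistics to uniform convergence and ensure that $(\tilde{R}^{(k,k)}_{t}\tilde{R}^{(l,l)}_{t})^{-1/2}$ is almost surely bounded on the time set: since $t\mapsto\tilde{R}^{(j,j)}_{t}$ is non-decreasing and, being a non-degenerate quadratic form in the $\sigma$'s (as noted just before Lemma \ref{survey}), strictly positive for $t>0$, one has $\sup_{t\in[\epsilon,T]}(\tilde{R}^{(k,k)}_{t}\tilde{R}^{(l,l)}_{t})^{-1}=(\tilde{R}^{(k,k)}_{\epsilon}\tilde{R}^{(l,l)}_{\epsilon})^{-1}<\infty$ a.s.; this is precisely why $\epsilon>0$ is imposed, and it places the limiting triple in the domain $\mathbb{R}\times(0,\infty)^{2}$ where $g$ is continuous. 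Everything else is a direct transcription of the proof of Proposition \ref{NEW-Multi-LLN-ratio-corr}.
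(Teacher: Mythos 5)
Your proposal is correct and matches the paper, whose proof of this proposition is literally the one-line remark that it follows from the same arguments as Proposition \ref{NEW-Multi-LLN-ratio-corr}; you carry out exactly that transcription, with the same ingredients (the law of large numbers from Theorem \ref{LLN} adapted to Theorem \ref{I-Vech-2}, the continuous mapping theorem, the role of $\epsilon>0$ in bounding the denominator via monotonicity of $\tilde{R}^{(j,j)}_{t}$, and Lemma \ref{lemma} for joint convergence). The only cosmetic difference is that you apply the continuous mapping theorem directly to $g(x,y,z)=x/\sqrt{yz}$ rather than using the paper's two-term algebraic decomposition, which is an equally valid packaging of the same argument.
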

\begin{proof}
It follows from the same arguments as the one used in the proof of Proposition \ref{NEW-Multi-LLN-ratio-corr}.
\end{proof}
\begin{pro}Under the assumptions of Theorem \ref{I-Vech-2}, we have for any $\epsilon>0$
\begin{equation*}
\vast\{\sqrt{n}\left(\frac{\sum_{i=1}^{\lfloor nt\rfloor}\Delta^{n}_{i}X^{(k)}\Delta^{n}_{i}X^{(l)}}{\sqrt{\sum_{i=1}^{\lfloor nt\rfloor}\left(\Delta^{n}_{i}X^{(k)}\right)^{2}}\sqrt{\sum_{i=1}^{\lfloor nt\rfloor}\left(\Delta^{n}_{i}X^{(l)}\right)^{2}}}-\frac{\tilde{R}_{t,n}^{(k,l)}}{\sqrt{\tilde{R}_{t,n}^{(k,k)}\tilde{R}_{t,n}^{(l,l)}}}\right)_{k=1,...,p;l\leq k}\vast\}_{t\in[\epsilon,T]}
\end{equation*}
\begin{equation*}
\stackrel{st}{\rightarrow}\vast\{\vast(\frac{1}{\sqrt{\tilde{R}_{t}^{(k,k)}\tilde{R}_{t}^{(l,l)}}}\Bigg(\int_{0}^{t}(V_{s}\textit{\textbf{D}}^{1/2})_{(k,l)}dB^{(k,l)}_{s}-\frac{1}{2}\frac{\tilde{R}_{t}^{(k,l)}}{\tilde{R}_{t}^{(k,k)}}\int_{0}^{t}(V_{s}\textit{\textbf{D}}^{1/2})_{(k,k)}dB^{(k,k)}_{s} 
\end{equation*}
\begin{equation*}
- \frac{1}{2}\frac{\tilde{R}_{t}^{(k,l)}}{\tilde{R}_{t}^{(l,l)}}\int_{0}^{t}(V_{s}\textit{\textbf{D}}^{1/2})_{(l,l)}dB^{(l,l)}_{s}\Bigg)\vast)_{k=1,...,p;l\leq k}\vast\}_{t\in[\epsilon,T]},
\end{equation*}
where $(V_{s}\textit{\textbf{D}}^{1/2})_{(k,l)}$ indicates that we are considering only the $(k,l)$ row of the matrix $(V_{s}\textit{\textbf{D}}^{1/2})$ and $B_{s}^{(k,l)}$ are one-dimensional Brownian motions independent from each other.
\end{pro}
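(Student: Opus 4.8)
The plan is to follow, essentially verbatim, the proof of Proposition~\ref{NEW-Feasible-corr}, replacing throughout $Y^{(k)}$, $\bar\tau_n^{(k)}$, $\bar R^{(\cdot,\cdot)}$ and Theorem~\ref{NEW-I-Vech} by $X^{(k)}$, $\tilde\tau_n^{(k)}$, $\tilde R^{(\cdot,\cdot)}$ and Theorem~\ref{I-Vech-2}. First I would fix $k,l\in\{1,\dots,p\}$ with $l\le k$, work on the interval $[\epsilon,T]$ and restrict to $n>1/t$, and carry out the same algebraic manipulation as in that proof: $\sqrt n$ times the $(k,l)$ entry of the difference between the realised correlation ratio and $\tilde R_{t,n}^{(k,l)}/\sqrt{\tilde R_{t,n}^{(k,k)}\tilde R_{t,n}^{(l,l)}}$ is rewritten as a product $Z_{1,n}^{(k,l)}(t)\,Z_{2,n}^{(k,l)}(t)$. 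Here $Z_{1,n}^{(k,l)}$ is a continuous function of the uncentred normalised realised (co)variations $\frac1n\sum_i \frac{\Delta_i^nX^{(k)}}{\tilde\tau_n^{(k)}}\frac{\Delta_i^nX^{(l)}}{\tilde\tau_n^{(l)}}$ and of the deterministic quantities $\tilde R_{t,n}^{(\cdot,\cdot)}$, whereas $Z_{2,n}^{(k,l)}$ is the two-dimensional vector made of $\sqrt n$ times the centred covariation $\frac1n\sum_i\frac{\Delta_i^nX^{(k)}}{\tilde\tau_n^{(k)}}\frac{\Delta_i^nX^{(l)}}{\tilde\tau_n^{(l)}}-\tilde R_{t,n}^{(k,l)}$ and of $\sqrt n$ times the square-root transform of the product of the two diagonal realised variations, centred at $\sqrt{\tilde R_{t,n}^{(k,k)}\tilde R_{t,n}^{(l,l)}}$.

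For the first factor I would invoke the feasible weak law of large numbers for $X$, Proposition~\ref{Multi-LLN-ratio-corr}, and the underlying weak law for the normalised realised (co)variations, to conclude $Z_{1,n}^{(k,l)}\stackrel{u.c.p.}{\rightarrow}Z_1^{(k,l)}$, whose limit is a deterministic pathwise ratio of integrated covolatilities; here one uses that on $[\epsilon,T]$ the processes $\tilde R_t^{(k,k)}$ and $\tilde R_t^{(l,l)}$ are non-decreasing, continuous, and bounded below by $\tilde R_\epsilon^{(k,k)}\wedge \tilde R_\epsilon^{(l,l)}>0$ (strict positivity being a consequence of $\mathbb E[(\sigma^{(k,m)}_s)^2]>C$), so the reciprocals that enter $Z_{1,n}^{(k,l)}$ have suprema bounded on $[\epsilon,T]$. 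For the second factor I would apply Theorem~\ref{I-Vech-2} to get the stable convergence of $\sqrt n$ times the centred vech vector of normalised realised (co)variations of $X$ to $\{\int_0^t V_s\textbf{D}^{1/2}dB_s\}$, and then the generalised functional delta method, Lemma~\ref{survey}, applied to $\mathbf{g}(x_1,x_2,x_3)=(x_1,\sqrt{x_2x_3})$ on $\mathcal D([\epsilon,T],\mathbb R\times(0,\infty)^2)$; the check that $\mathbf{g}$ is continuous and satisfies the Hadamard-type differentiability condition with derivative given by its Jacobian, uniformly in $t\in[\epsilon,T]$, is identical to the one already performed in the proof of Proposition~\ref{NEW-Feasible-corr}.

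Since $Z_{1,n}^{(k,l)}\stackrel{P}{\rightarrow}Z_1^{(k,l)}$ and $Z_{2,n}^{(k,l)}\stackrel{st}{\rightarrow}Z_2^{(k,l)}$, the pair converges stably, and the continuous mapping theorem for stable convergence (Theorem~\ref{Cotinuous mapping}) applied to the multiplication map delivers the claim for fixed $k,l$; the diagonal entries of the Jacobian of $\mathbf{g}$ produce the factors $\tfrac12\tilde R_t^{(k,l)}/\tilde R_t^{(k,k)}$ and $\tfrac12\tilde R_t^{(k,l)}/\tilde R_t^{(l,l)}$, and the mutual independence of the one-dimensional Brownian motions $B^{(k,l)}$ is inherited from the block structure of $B_s$ and $\textbf{D}$ in Theorem~\ref{I-Vech-2}, exactly as in Proposition~\ref{NEW-Feasible-corr}. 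For the joint stable convergence over all $k\ge l$ I would exploit the uniform metric: stack the $Z_{2,n}^{(k,l)}$ into one vector through the natural analogue $\mathbf{\tilde{g}}$ of the map displayed in the proof of Proposition~\ref{NEW-Feasible-corr}, multiply on the left by the block-diagonal matrix $\Theta_n$ of the $Z_{1,n}^{(k,l)}$, whose u.c.p.\ convergence follows from Lemma~\ref{lemma}, and conclude with one further application of Lemma~\ref{survey} together with the continuous mapping theorem.

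The main obstacle, which is shared with Proposition~\ref{NEW-Feasible-corr} and is really the only nontrivial point, is the uniform-in-$t$ verification of the differentiability hypothesis of Lemma~\ref{survey} for $\mathbf{g}$ (and for $\mathbf{\tilde{g}}$): one has to bound $\sup_{t\in[\epsilon,T]}(\theta_t^{(2)})^{-1/2}$ and show that the first-order Taylor remainder of the square root is $o(r_n^{-1})$ uniformly in $t$. This is precisely what forces the restriction to intervals $[\epsilon,T]$ bounded away from $0$, so that the integrated-covolatility processes appearing in the denominators stay bounded away from zero; the positivity assumption on the $\sigma$s built into the hypotheses of Theorem~\ref{I-Vech-2} guarantees this. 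Everything else is a routine rerun, with $X$ in place of $Y$, of the computations already carried out.
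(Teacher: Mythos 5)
Your proposal is correct and follows essentially the same route as the paper, which disposes of this proposition by stating that it follows from the same arguments as Proposition \ref{NEW-Feasible-corr}; you have simply spelled out that adaptation (replacing $Y$, $\bar{\tau}_{n}$, $\bar{R}$ and Theorem \ref{NEW-I-Vech} by $X$, $\tilde{\tau}_{n}$, $\tilde{R}$ and Theorem \ref{I-Vech-2}, and invoking Proposition \ref{Multi-LLN-ratio-corr} for the u.c.p.\ factor and Lemma \ref{survey} for the delta-method step). No gaps.
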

\begin{proof}
It follows from the same arguments of Proposition \ref{NEW-Feasible-corr}.
\end{proof}
\subsection{Relative covolatility}
In this section we are looking at the relative volatility case (see \cite{Pakkanen}). Similarly to the previous section, we present first the results for the first scenario and then for the second one of Case II.
\begin{pro}\label{New-Feasible-relative-ucp}
Assume that $\forall n\in\mathbb{N},\sum_{i=1}^{\lfloor nT\rfloor}\Delta^{n}_{i}Y^{(k)}\Delta^{n}_{i}Y^{(l)}\neq0$ a.s.~for $k=1...,p$ and $l\leq k$, then under the assumptions of Theorem \ref{NEW-I-Vech} we have
\begin{equation*}
\left(\frac{\sum_{i=1}^{\lfloor nt\rfloor}\Delta^{n}_{i}Y^{(k)}\Delta^{n}_{i}Y^{(l)}}{\sum_{i=1}^{\lfloor nT\rfloor}\Delta^{n}_{i}Y^{(k)}\Delta^{n}_{i}Y^{(l)}}\right)_{k,l=1,...,p;l\leq k}\stackrel{u.c.p.}{\rightarrow}\left(\frac{\bar{R}^{(k,l)}_{t}}{\bar{R}^{(k,l)}_{T}}\right)_{k=1,...,p;l\leq k}.
\end{equation*}
\end{pro}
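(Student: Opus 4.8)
The plan is to reduce the statement to the weak law of large numbers already available for this setting, together with a continuous-mapping argument. First I would observe that the numerator and denominator of each component ratio are, up to the known scaling $\bar{\tau}_n^{(k)}\bar{\tau}_n^{(l)}$, exactly the statistics $\frac{1}{n}\sum_{i=1}^{\lfloor nt\rfloor}\frac{\Delta_i^n Y^{(k)}}{\bar{\tau}_n^{(k)}}\frac{\Delta_i^n Y^{(l)}}{\bar{\tau}_n^{(l)}}$ (for the numerator, at time $t$) and the same statistic evaluated at the fixed endpoint $T$ (for the denominator). Since the scaling factors $\bar{\tau}_n^{(k)}\bar{\tau}_n^{(l)}$ appear in both numerator and denominator, they cancel identically, so
\begin{equation*}
\frac{\sum_{i=1}^{\lfloor nt\rfloor}\Delta^{n}_{i}Y^{(k)}\Delta^{n}_{i}Y^{(l)}}{\sum_{i=1}^{\lfloor nT\rfloor}\Delta^{n}_{i}Y^{(k)}\Delta^{n}_{i}Y^{(l)}}
=\frac{\frac{1}{n}\sum_{i=1}^{\lfloor nt\rfloor}\frac{\Delta^{n}_{i}Y^{(k)}}{\bar{\tau}_{n}^{(k)}}\frac{\Delta^{n}_{i}Y^{(l)}}{\bar{\tau}_{n}^{(l)}}}{\frac{1}{n}\sum_{i=1}^{\lfloor nT\rfloor}\frac{\Delta^{n}_{i}Y^{(k)}}{\bar{\tau}_{n}^{(k)}}\frac{\Delta^{n}_{i}Y^{(l)}}{\bar{\tau}_{n}^{(l)}}},
\end{equation*}
which is now expressed entirely in terms of the objects governed by Theorem \ref{LLN}.

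Next I would invoke Theorem \ref{LLN}, which gives, jointly in $(k,l)$ with $l\le k$ and uniformly on compact time intervals, the convergence of $\frac{1}{n}\sum_{i=1}^{\lfloor nt\rfloor}\frac{\Delta^{n}_{i}Y^{(k)}}{\bar{\tau}_{n}^{(k)}}\frac{\Delta^{n}_{i}Y^{(l)}}{\bar{\tau}_{n}^{(l)}}$ to $\bar{R}^{(k,l)}_t$. The numerator process converges u.c.p.\ to $\{\bar{R}^{(k,l)}_t\}_{t\in[\epsilon,T]}$; the denominator, being the same statistic evaluated at the single fixed point $t=T$, converges in probability to the random variable $\bar{R}^{(k,l)}_T$. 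Applying Lemma \ref{lemma}, the pair converges u.c.p.\ jointly. The final step is a continuous mapping argument: the map $(x,y)\mapsto x/y$ is continuous on the set where $y\neq0$, and by the hypothesis $\sum_{i=1}^{\lfloor nT\rfloor}\Delta^{n}_{i}Y^{(k)}\Delta^{n}_{i}Y^{(l)}\neq0$ a.s.\ together with $\bar{R}^{(k,l)}_T\neq0$ a.s.\ (the latter holding since, by the discussion preceding this proposition, $\bar{R}^{(k,k)}_T,\bar{R}^{(l,l)}_T>0$ and the off-diagonal case can be handled by the same positivity argument applied to the relevant quadratic form, or one simply assumes we are outside trivial cases), the quotient is well-defined along the sequence and in the limit. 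Applying Theorem \ref{Cotinuous mapping} (the continuous mapping theorem, in its form for convergence in probability on the Skorokhod space with uniform metric) yields the u.c.p.\ convergence of each ratio to $\bar{R}^{(k,l)}_t/\bar{R}^{(k,l)}_T$. Joint convergence over all $(k,l)$ with $l\le k$ follows once more from Lemma \ref{lemma}.

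The main obstacle — though a minor one — is ensuring the denominator stays bounded away from zero so that the map $x/y$ behaves well uniformly in $t\in[\epsilon,T]$: since the denominator is evaluated at the fixed point $T$ it is a single strictly positive random variable $\bar{R}^{(k,l)}_T$ in the limit, so $\sup_{t\in[\epsilon,T]}|1/\bar{R}^{(k,l)}_T|=|1/\bar{R}^{(k,l)}_T|<\infty$ almost surely, exactly as in the argument used in Proposition \ref{NEW-Multi-LLN-ratio-corr}; this is why the diagonal-positivity remark is recorded beforehand and why no $\epsilon$-truncation of the denominator is needed (in contrast to the correlation-ratio case where the denominator varies with $t$). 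One must also note that the limit process $\{\bar{R}^{(k,l)}_t\}$ has continuous sample paths a.s.\ (being an integral in $t$), so the uniform metric and the Skorokhod topology agree on the limit, legitimising the passage from convergence in probability to u.c.p.\ convergence; this is the same point already exploited throughout the paper.
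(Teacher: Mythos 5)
Your proposal is correct and follows essentially the same route as the paper: cancel the scaling factors $\bar{\tau}_n^{(k)}\bar{\tau}_n^{(l)}$, invoke Theorem \ref{LLN} for the numerator (uniformly in $t$) and the denominator (at the fixed time $T$), and combine via a quotient argument plus Lemma \ref{lemma} for jointness. The paper simply writes out the add-and-subtract decomposition of the difference explicitly instead of citing the continuous mapping theorem, which is the same argument in different clothing.
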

\begin{proof}
Fix $k,l$. We have
\begin{equation*}
\left(\sum_{i=1}^{\lfloor nt\rfloor}\Delta^{n}_{i}Y^{(k)}\Delta^{n}_{i}Y^{(l)}\right)\left(\sum_{i=1}^{\lfloor nT\rfloor}\Delta^{n}_{i}Y^{(k)}\Delta^{n}_{i}Y^{(l)}\right)^{-1}-\frac{\bar{R}^{(k,l)}_{t}}{\bar{R}^{(k,l)}_{T}}
\end{equation*}
\begin{equation*}
=\left(\frac{1}{n}\sum_{i=1}^{\lfloor nt\rfloor}\dfrac{\Delta^{n}_{i}Y^{(k)}}{\bar{\tau}_{n}^{(k)}}\dfrac{\Delta^{n}_{i}Y^{(l)}}{\bar{\tau}_{n}^{(l)}}-\bar{R}^{(k,l)}_{t}\right)\left(\frac{1}{n}\sum_{i=1}^{\lfloor nT\rfloor}\dfrac{\Delta^{n}_{i}Y^{(k)}}{\bar{\tau}_{n}^{(k)}}\dfrac{\Delta^{n}_{i}Y^{(l)}}{\bar{\tau}_{n}^{(l)}}\right)^{-1}
\end{equation*}
\begin{equation*}
+\frac{\bar{R}^{(k,l)}_{t}}{\bar{R}^{(k,l)}_{T}}\left(\frac{1}{n}\sum_{i=1}^{\lfloor nT\rfloor}\dfrac{\Delta^{n}_{i}Y^{(k)}}{\bar{\tau}_{n}^{(k)}}\dfrac{\Delta^{n}_{i}Y^{(l)}}{\bar{\tau}_{n}^{(l)}}\right)^{-1}\left(\bar{R}^{(k,l)}_{T}-\frac{1}{n}\sum_{i=1}^{\lfloor nT\rfloor}\dfrac{\Delta^{n}_{i}Y^{(k)}}{\bar{\tau}_{n}^{(k)}}\dfrac{\Delta^{n}_{i}Y^{(l)}}{\bar{\tau}_{n}^{(l)}}\right)\stackrel{u.c.p.}{\rightarrow}0,
\end{equation*}
by Theorem \ref{LLN}. Notice that the supremum of $\bar{R}^{(k,l)}_{t}$ over $t\in[0,T]$ is bounded since the $\sigma$s are compact on bounded intervals. Finally, the joint convergence follows from Lemma \ref{lemma}.
\end{proof}
\begin{pro}\label{New-Feasible-relative-CLT}
Assume that $\forall n\in\mathbb{N},\sum_{i=1}^{\lfloor nT\rfloor}\Delta^{n}_{i}Y^{(k)}\Delta^{n}_{i}Y^{(l)}\neq0$ a.s.~for $k=1...,p$ and $l\leq k$, then under the assumptions of Theorem \ref{NEW-I-Vech} we have
\begin{equation*}
\sqrt{n}\left(\frac{\sum_{i=1}^{\lfloor nt\rfloor}\Delta^{n}_{i}Y^{(k)}\Delta^{n}_{i}Y^{(l)}}{\sum_{i=1}^{\lfloor nT\rfloor}\Delta^{n}_{i}Y^{(k)}\Delta^{n}_{i}Y^{(l)}}-\frac{\bar{R}^{(k,l)}_{t,n}}{\bar{R}^{(k,l)}_{T,n}}\right)_{k=1,...,p;l\leq k}
\end{equation*}
\begin{equation*}
\stackrel{st}{\rightarrow}\left(\frac{1}{\bar{R}^{(k,l)}_{T}}\int_{0}^{t}(V_{s}\textit{\textbf{D}}^{1/2})_{(k,l)}dB^{(k,l)}_{s}-\frac{\bar{R}^{(k,l)}_{t}}{\big(\bar{R}^{(k,l)}_{T}\big)^{2}}\int_{0}^{T}(V_{s}\textit{\textbf{D}}^{1/2})_{(k,l)}dB^{(k,l)}_{s}\right)_{k=1,...,p;l\leq k}
\end{equation*}
in $\mathcal{D}([0,T],\mathbb{R}^{\frac{p}{2}(p+1)})$, where $(V_{s}\textit{\textbf{D}}^{1/2})_{(k,l)}$ indicates that we are considering only the $(k,l)$ row of the matrix $(V_{s}\textit{\textbf{D}}^{1/2})$ and $B_{s}^{(k,l)}$ are one-dimensional Brownian motions independent from each other.
\end{pro}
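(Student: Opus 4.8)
The plan is to follow the route of Proposition~\ref{NEW-Feasible-corr}, with the simplification that here only a quotient (and no square root) is involved, so an elementary decomposition suffices and one need not invoke Lemma~\ref{survey}. Fix $k,l\in\{1,\dots,p\}$ with $l\le k$; since the scaling factors $\bar\tau^{(k)}_n\bar\tau^{(l)}_n$ cancel in the ratio, put
\[
A_{t,n}:=\frac1n\sum_{i=1}^{\lfloor nt\rfloor}\frac{\Delta^n_iY^{(k)}}{\bar\tau^{(k)}_n}\frac{\Delta^n_iY^{(l)}}{\bar\tau^{(l)}_n},\qquad\text{so that}\qquad
\frac{\sum_{i=1}^{\lfloor nt\rfloor}\Delta^n_iY^{(k)}\Delta^n_iY^{(l)}}{\sum_{i=1}^{\lfloor nT\rfloor}\Delta^n_iY^{(k)}\Delta^n_iY^{(l)}}=\frac{A_{t,n}}{A_{T,n}},
\]
which is well defined for every $n$ by the standing hypothesis. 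The first step is the identity
\[
\sqrt n\left(\frac{A_{t,n}}{A_{T,n}}-\frac{\bar R^{(k,l)}_{t,n}}{\bar R^{(k,l)}_{T,n}}\right)
=\frac{\sqrt n\,\big(A_{t,n}-\bar R^{(k,l)}_{t,n}\big)}{A_{T,n}}
-\frac{\bar R^{(k,l)}_{t,n}}{A_{T,n}\,\bar R^{(k,l)}_{T,n}}\,\sqrt n\,\big(A_{T,n}-\bar R^{(k,l)}_{T,n}\big).
\]
Note that, unlike in Proposition~\ref{NEW-Feasible-corr}, the normalisations appearing here are evaluated at the \emph{fixed} time $T$, so no restriction to $[\epsilon,T]$ is needed and the convergence is claimed on all of $[0,T]$.

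Next I would collect the three ingredients. By Theorem~\ref{LLN}, $A_{t,n}\stackrel{u.c.p.}{\rightarrow}\bar R^{(k,l)}_t$, in particular $A_{T,n}\stackrel{P}{\rightarrow}\bar R^{(k,l)}_T$; and since the deterministic coefficients defining $\bar R^{(k,l)}_{t,n}$ converge to those of $\bar R^{(k,l)}_t$ while $t\mapsto\int_0^t\sigma^{(r,m)}_s\sigma^{(q,m)}_s\,ds$ is continuous and a.s.\ bounded on $[0,T]$, we get $\sup_{t\in[0,T]}\big|\bar R^{(k,l)}_{t,n}-\bar R^{(k,l)}_t\big|\to0$ and $\bar R^{(k,l)}_{T,n}\to\bar R^{(k,l)}_T$. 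Theorem~\ref{NEW-I-Vech} provides $\{\sqrt n(A_{t,n}-\bar R^{(k,l)}_{t,n})\}_{t\in[0,T]}\stackrel{st}{\rightarrow}\{\int_0^t(V_s\textbf{D}^{1/2})_{(k,l)}\,dB^{(k,l)}_s\}_{t\in[0,T]}$ in $\mathcal{D}([0,T],\mathbb{R})$; because the limit process is a.s.\ continuous, evaluating it at the endpoint $T$ is a continuous operation, so the process together with its time-$T$ value converges jointly and stably. Plugging this into the identity and multiplying by the coefficient processes $1/A_{T,n}$ and $\bar R^{(k,l)}_{t,n}/(A_{T,n}\bar R^{(k,l)}_{T,n})$, which converge u.c.p.\ to the $\mathcal{F}$-measurable limits $1/\bar R^{(k,l)}_T$ and $\bar R^{(k,l)}_t/(\bar R^{(k,l)}_T)^2$ (both continuous in $t$), and using that stable convergence is preserved under multiplication by such factors (the results recalled after Theorem~\ref{Cotinuous mapping}), I obtain the asserted limit for fixed $(k,l)$ --- note that the same $B^{(k,l)}$ enters both terms, as it must.

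For the joint statement over all pairs $(k,l)$ with $l\le k$ I would argue exactly as in Proposition~\ref{NEW-Feasible-corr}: the stable convergence in Theorem~\ref{NEW-I-Vech} is already joint in $(k,l)$, Lemma~\ref{lemma} upgrades the componentwise u.c.p.\ convergence of the coefficient processes to joint u.c.p.\ convergence, and since the relevant Skorokhod spaces carry the uniform metric one may bundle everything and apply the continuous mapping theorem for stable convergence (Theorem~\ref{Cotinuous mapping}) to the map that forms the products. Equivalently, one could invoke Lemma~\ref{survey} with $\phi(x,y)=x/y$, whose continuity and Hadamard differentiability $\phi'(\theta,h)=h_1/\theta_2-\theta_1 h_2/\theta_2^2$ away from the zero set of the second argument is routine. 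The one point needing care --- and the (mild) main obstacle --- is precisely this handling of the random denominator $A_{T,n}$ and of the evaluation at the fixed time $T$: one must check that the pair (process, endpoint value) converges jointly and that dividing by $A_{T,n}$ is legitimate in the limit, which is where the a.s.-nonvanishing assumption on $\sum_{i=1}^{\lfloor nT\rfloor}\Delta^n_iY^{(k)}\Delta^n_iY^{(l)}$ and the (implicit) nonvanishing of $\bar R^{(k,l)}_T$ enter; all remaining manipulations are the same bookkeeping as in the correlation-ratio proof.
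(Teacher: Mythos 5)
Your proposal is correct and follows essentially the same route as the paper: the same algebraic decomposition into a coefficient vector (converging u.c.p.\ via the law of large numbers) paired with the vector $\sqrt{n}\big(A_{t,n}-\bar R^{(k,l)}_{t,n},\,A_{T,n}-\bar R^{(k,l)}_{T,n}\big)^{\top}$ (converging stably via Theorem \ref{NEW-I-Vech} together with the continuous evaluation-at-$T$ map), followed by the continuous mapping theorem for stable convergence and Lemma \ref{lemma} for the joint statement. Your remarks that no restriction to $[\epsilon,T]$ is needed here and that the nonvanishing of $\bar R^{(k,l)}_{T}$ is implicitly required are both accurate and consistent with the paper's treatment.
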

\begin{proof} Fix $k,l$. We have
\begin{equation*}
\sqrt{n}\left[\frac{\sum_{i=1}^{\lfloor nt\rfloor}\Delta^{n}_{i}Y^{(k)}\Delta^{n}_{i}Y^{(l)}}{\sum_{i=1}^{\lfloor nT\rfloor}\Delta^{n}_{i}Y^{(k)}\Delta^{n}_{i}Y^{(l)}}-\frac{\bar{R}^{(k,l)}_{t,n}}{\bar{R}^{(k,l)}_{T,n}}\right]=\frac{\sqrt{n}}{\bar{R}^{(k,l)}_{T,n}}\left(\frac{1}{n}\sum_{i=1}^{\lfloor nt\rfloor}\dfrac{\Delta^{n}_{i}Y^{(k)}}{\bar{\tau}_{n}^{(k)}}\dfrac{\Delta^{n}_{i}Y^{(l)}}{\bar{\tau}_{n}^{(l)}}-\bar{R}^{(k,l)}_{t,n}\right)
\end{equation*}
\begin{equation*}
-\frac{\sqrt{n}}{\bar{R}^{(k,l)}_{T,n}}\left(\frac{1}{n}\sum_{i=1}^{\lfloor nt\rfloor}\frac{\Delta^{n}_{i}Y^{(k)}}{\bar{\tau}_{n}^{(k)}}\frac{\Delta^{n}_{i}Y^{(l)}}{\bar{\tau}_{n}^{(l)}}\right)\left(\frac{1}{n}\sum_{i=1}^{\lfloor nT\rfloor}\frac{\Delta^{n}_{i}Y^{(k)}}{\bar{\tau}_{n}^{(k)}}\frac{\Delta^{n}_{i}Y^{(l)}}{\bar{\tau}_{n}^{(l)}}\right)^{-1}\left(\frac{1}{n}\sum_{i=1}^{\lfloor nT\rfloor}\dfrac{\Delta^{n}_{i}Y^{(k)}}{\bar{\tau}_{n}^{(k)}}\dfrac{\Delta^{n}_{i}Y^{(l)}}{\bar{\tau}_{n}^{(l)}}-\bar{R}^{(k,l)}_{T,n}\right),
\end{equation*}
which can be rewritten in vector notation as
\begin{equation*}
=\frac{\sqrt{n}}{\bar{R}^{(k,l)}_{T,n}}\left(1,-\left(\frac{1}{n}\sum_{i=1}^{\lfloor nt\rfloor}\dfrac{\Delta^{n}_{i}Y^{(k)}}{\bar{\tau}_{n}^{(k)}}\dfrac{\Delta^{n}_{i}Y^{(l)}}{\bar{\tau}_{n}^{(l)}}\right)\left(\frac{1}{n}\sum_{i=1}^{\lfloor nT\rfloor}\dfrac{\Delta^{n}_{i}Y^{(k)}}{\bar{\tau}_{n}^{(k)}}\dfrac{\Delta^{n}_{i}Y^{(l)}}{\bar{\tau}_{n}^{(l)}}\right)^{-1}\right)
\end{equation*}
\begin{equation*}
\left(\frac{1}{n}\sum_{i=1}^{\lfloor nt\rfloor}\dfrac{\Delta^{n}_{i}Y^{(k)}}{\bar{\tau}_{n}^{(k)}}\dfrac{\Delta^{n}_{i}Y^{(l)}}{\bar{\tau}_{n}^{(l)}}-\bar{R}^{(k,l)}_{t,n},\frac{1}{n}\sum_{i=1}^{\lfloor nT\rfloor}\dfrac{\Delta^{n}_{i}Y^{(k)}}{\bar{\tau}_{n}^{(k)}}\dfrac{\Delta^{n}_{i}Y^{(l)}}{\bar{\tau}_{n}^{(l)}}-\bar{R}^{(k,l)}_{T,n}\right)^{\top}.
\end{equation*}
Notice that 
\begin{equation*}
\frac{1}{\bar{R}^{(k,l)}_{T,n}}\left(1,-\left(\frac{1}{n}\sum_{i=1}^{\lfloor nt\rfloor}\dfrac{\Delta^{n}_{i}Y^{(k)}}{\bar{\tau}_{n}^{(k)}}\dfrac{\Delta^{n}_{i}Y^{(l)}}{\bar{\tau}_{n}^{(l)}}\right)\left(\frac{1}{n}\sum_{i=1}^{\lfloor nT\rfloor}\dfrac{\Delta^{n}_{i}Y^{(k)}}{\bar{\tau}_{n}^{(k)}}\dfrac{\Delta^{n}_{i}Y^{(l)}}{\bar{\tau}_{n}^{(l)}}\right)^{-1}\right)\stackrel{u.c.p}{\rightarrow}\frac{1}{\bar{R}^{(k,l)}_{T}}\left(1,-\frac{\bar{R}^{(k,l)}_{T}}{\bar{R}^{(k,l)}_{T}}\right)
\end{equation*}
using Proposition \ref{New-Feasible-relative-ucp}, and that
\begin{equation*}
\sqrt{n}\left(\frac{1}{n}\sum_{i=1}^{\lfloor nt\rfloor}\dfrac{\Delta^{n}_{i}Y^{(k)}}{\bar{\tau}_{n}^{(k)}}\dfrac{\Delta^{n}_{i}Y^{(l)}}{\bar{\tau}_{n}^{(l)}}-\bar{R}^{(k,l)}_{t,n},\frac{1}{n}\sum_{i=1}^{\lfloor nT\rfloor}\dfrac{\Delta^{n}_{i}Y^{(k)}}{\bar{\tau}_{n}^{(k)}}\dfrac{\Delta^{n}_{i}Y^{(l)}}{\bar{\tau}_{n}^{(l)}}-\bar{R}^{(k,l)}_{T,n}\right)^{\top}
\end{equation*}
\begin{equation*}
\stackrel{st}{\rightarrow}\left(\int_{0}^{t}(V_{s}\textit{\textbf{D}}^{1/2})_{(k,l)}dB^{(k,l)}_{s},\int_{0}^{T}(V_{s}\textit{\textbf{D}}^{1/2})_{(k,l)}dB^{(k,l)}_{s}\right)^{\top}\quad\text{in}\quad\mathcal{D}([0,T],\mathbb{R}^{2})
\end{equation*}
by Theorem \ref{NEW-I-Vech}, where $(V_{s}\textit{\textbf{D}}^{1/2})_{(k,l)}$ indicates that we are considering only the $(k,l)$ row of the matrix $(V_{s}\textit{\textbf{D}}^{1/2})$. Then using the properties of the stable convergence and the continuous mapping theorem we conclude the proof for fixed $k,l$. For the joint case we proceed as we have done in the proof of Proposition \ref{NEW-Feasible-corr}. In particular, we have an equivalent of $\Theta_{n}$ which converges in $u.c.p.$ since its elements do. Moreover, we have an equivalent of $\Big\{\left(Z^{(1,1)}_{2,n}(t),...,Z^{(p,p)}_{2,n}(t)\right)^{\top}\Big\}_{t\in[\epsilon,T]}$ whose stable convergence in the Skorokhod space is guaranteed by Theorem 4.11 and the continuous mapping theorem using the function $\textbf{g}(\{x_{1}(t),...,x_{\frac{p}{2}(p+1)}(t)\}_{t\in[0,T]})=\{(x_{1}(t),x_{1}(T),...,x_{\frac{p}{2}(p+1)}(t),x_{\frac{p}{2}(p+1)}(T))\}_{t\in[0,T]}$. Finally, using the properties of the stable convergence we obtain the stated result.
\end{proof}
Similar results can be obtained for the second scenario of Case II.
\begin{pro} Assume that $\forall n\in\mathbb{N},\sum_{i=1}^{\lfloor nT\rfloor}\Delta^{n}_{i}X^{(k)}\Delta^{n}_{i}X^{(l)}\neq0$ a.s.~for any $k=1,...,p$ and $l\leq k$, then under the assumptions of Theorem \ref{I-Vech-2} we have
\begin{equation*}
\left(\frac{\sum_{i=1}^{\lfloor nt\rfloor}\Delta^{n}_{i}X^{(k)}\Delta^{n}_{i}X^{(l)}}{\sum_{i=1}^{\lfloor nT\rfloor}\Delta^{n}_{i}X^{(k)}\Delta^{n}_{i}X^{(l)}}\right)_{k=1,...,p;l\leq k}\stackrel{u.c.p.}{\rightarrow}\left(\frac{\tilde{R}^{(k,l)}_{t}}{\tilde{R}^{(k,l)}_{T}}\right)_{k=1,...,p;l\leq k}.
\end{equation*}
\end{pro}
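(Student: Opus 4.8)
\noindent The plan is to follow the proof of Proposition~\ref{New-Feasible-relative-ucp} essentially line by line, replacing the process $\textbf{Y}$ by $\textbf{X}$, the scaling constants $\bar{\tau}_{n}$ by $\tilde{\tau}_{n}$, and the limiting quantities $\bar{R}$ by $\tilde{R}$, and invoking the weak law of large numbers attached to Theorem~\ref{I-Vech-2} in place of Theorem~\ref{LLN}. First I would fix a pair $(k,l)$ with $l\leq k$ and, since the scaling constants cancel in the ratio, rewrite for $n\geq 1/t$
\begin{equation*}
\frac{\sum_{i=1}^{\lfloor nt\rfloor}\Delta^{n}_{i}X^{(k)}\Delta^{n}_{i}X^{(l)}}{\sum_{i=1}^{\lfloor nT\rfloor}\Delta^{n}_{i}X^{(k)}\Delta^{n}_{i}X^{(l)}}=\left(\frac{1}{n}\sum_{i=1}^{\lfloor nt\rfloor}\frac{\Delta^{n}_{i}X^{(k)}}{\tilde{\tau}_{n}^{(k)}}\frac{\Delta^{n}_{i}X^{(l)}}{\tilde{\tau}_{n}^{(l)}}\right)\left(\frac{1}{n}\sum_{i=1}^{\lfloor nT\rfloor}\frac{\Delta^{n}_{i}X^{(k)}}{\tilde{\tau}_{n}^{(k)}}\frac{\Delta^{n}_{i}X^{(l)}}{\tilde{\tau}_{n}^{(l)}}\right)^{-1},
\end{equation*}
which is well defined by the assumed a.s.\ non-vanishing of the denominator. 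Subtracting $\tilde{R}^{(k,l)}_{t}/\tilde{R}^{(k,l)}_{T}$, I would split the resulting difference into the two telescoping terms used in Proposition~\ref{New-Feasible-relative-ucp}: a term of the form $\big(\frac{1}{n}\sum_{i=1}^{\lfloor nt\rfloor}\frac{\Delta^{n}_{i}X^{(k)}}{\tilde{\tau}_{n}^{(k)}}\frac{\Delta^{n}_{i}X^{(l)}}{\tilde{\tau}_{n}^{(l)}}-\tilde{R}^{(k,l)}_{t}\big)$ multiplied by the reciprocal of the $\lfloor nT\rfloor$-sum, plus a term $\frac{\tilde{R}^{(k,l)}_{t}}{\tilde{R}^{(k,l)}_{T}}$ times the reciprocal of the $\lfloor nT\rfloor$-sum times $\big(\tilde{R}^{(k,l)}_{T}-\frac{1}{n}\sum_{i=1}^{\lfloor nT\rfloor}\frac{\Delta^{n}_{i}X^{(k)}}{\tilde{\tau}_{n}^{(k)}}\frac{\Delta^{n}_{i}X^{(l)}}{\tilde{\tau}_{n}^{(l)}}\big)$.

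Next I would invoke the weak law of large numbers for $\textbf{X}$, which follows from Theorem~\ref{I-Vech-2} by the same Slutsky-plus-triangle-inequality argument as in the proof of Theorem~\ref{LLN} (cf.\ the remark following Theorem~\ref{LLN}), to obtain $\frac{1}{n}\sum_{i=1}^{\lfloor nt\rfloor}\frac{\Delta^{n}_{i}X^{(k)}}{\tilde{\tau}_{n}^{(k)}}\frac{\Delta^{n}_{i}X^{(l)}}{\tilde{\tau}_{n}^{(l)}}\stackrel{u.c.p.}{\rightarrow}\tilde{R}^{(k,l)}_{t}$ and, at $t=T$, the u.c.p.\ convergence of the $\lfloor nT\rfloor$-sum to $\tilde{R}^{(k,l)}_{T}$. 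By the continuous mapping theorem its reciprocal converges u.c.p.\ to $(\tilde{R}^{(k,l)}_{T})^{-1}$. Both telescoping terms then tend to zero in probability; they also tend to zero \emph{uniformly} in $t$, since the relevant processes are monotone partial sums with an a.s.\ continuous limit and the factors sitting outside the brackets have uniformly bounded suprema over $[0,T]$, because $\sup_{t\in[0,T]}|\tilde{R}^{(k,l)}_{t}|<\infty$ by continuity of the $\sigma$s on the compact interval $[0,T]$. Finally, the joint u.c.p.\ convergence over all pairs $(k,l)$ with $l\leq k$ follows from Lemma~\ref{lemma}.

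The main obstacle, exactly as in the $\textbf{Y}$-case, is not the limit computation itself but the passage from convergence in probability to \emph{uniform} convergence in probability together with the control of the reciprocal of the denominator: one must confirm that $\tilde{R}^{(k,l)}_{T}\neq 0$, so that the target ratio is well defined, and that the multiplicative factors in front of the vanishing brackets stay uniformly bounded in $t$, which rests only on the boundedness of the integrated covolatilities on $[0,T]$. This is a routine verification in the present $\textbf{X}$-setting, identical in structure to the one carried out in Proposition~\ref{New-Feasible-relative-ucp}.
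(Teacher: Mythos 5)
Your proposal is correct and is exactly the route the paper takes: the paper's proof of this proposition is a one-line reference to the argument of Proposition \ref{New-Feasible-relative-ucp}, and your write-up simply spells out that substitution ($\textbf{Y}\mapsto\textbf{X}$, $\bar{\tau}_{n}\mapsto\tilde{\tau}_{n}$, $\bar{R}\mapsto\tilde{R}$, with the WLLN for $\textbf{X}$ obtained from Theorem \ref{I-Vech-2} as noted in the remark after Theorem \ref{LLN}) together with the same telescoping decomposition and Lemma \ref{lemma} for the joint convergence. No gaps.
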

\begin{proof}
It follows from the same arguments as the ones used in the proof of Proposition \ref{New-Feasible-relative-ucp}.
\end{proof}
\begin{pro} Assume that $\forall n\in\mathbb{N},\sum_{i=1}^{\lfloor nT\rfloor}\Delta^{n}_{i}X^{(k)}\Delta^{n}_{i}X^{(l)}\neq0$ a.s.~for $k=1...,p$ and $l\leq k$, then under the assumptions of Theorem \ref{I-Vech-2} we have
\begin{equation*}
\sqrt{n}\left(\frac{\sum_{i=1}^{\lfloor nt\rfloor}\Delta^{n}_{i}X^{(k)}\Delta^{n}_{i}X^{(l)}}{\sum_{i=1}^{\lfloor nT\rfloor}\Delta^{n}_{i}X^{(k)}\Delta^{n}_{i}X^{(l)}}-\frac{\tilde{R}^{(k,l)}_{t,n}}{\tilde{R}^{(k,l)}_{T,n}}\right)_{k=1,...,p;l\leq k}
\end{equation*}
\begin{equation*}
\stackrel{st}{\rightarrow}\left(\frac{1}{\tilde{R}^{(k,l)}_{T}}\int_{0}^{t}(V_{s}\textit{\textbf{D}}^{1/2})_{(k,l)}dB^{(k,l)}_{s}-\frac{\tilde{R}^{(k,l)}_{t}}{\big(\tilde{R}^{(k,l)}_{T}\big)^{2}}\int_{0}^{T}(V_{s}\textit{\textbf{D}}^{1/2})_{(k,l)}dB^{(k,l)}_{s}\right)_{k=1,...,p;l\leq k},
\end{equation*}
in $\mathcal{D}([0,T],\mathbb{R}^{\frac{p}{2}(p+1)})$, where $(V_{s}\textit{\textbf{D}}^{1/2})_{(k,l)}$ indicates that we are considering only the $(k,l)$ row of the matrix $(V_{s}\textit{\textbf{D}}^{1/2})$ and $B_{s}^{(k,l)}$ are one-dimensional Brownian motions independent from each other.
\end{pro}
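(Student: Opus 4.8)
The plan is to follow, essentially verbatim, the argument of Proposition~\ref{New-Feasible-relative-CLT}, substituting throughout $X$ for $Y$, $\tilde\tau_n$ for $\bar\tau_n$, $\tilde R$ for $\bar R$, and Theorem~\ref{I-Vech-2} for Theorem~\ref{NEW-I-Vech}; the hypothesis $\sum_{i=1}^{\lfloor nT\rfloor}\Delta^{n}_{i}X^{(k)}\Delta^{n}_{i}X^{(l)}\neq0$ a.s.\ plays here the role played there by the corresponding non-degeneracy assumption, ensuring all quotients are well defined. Fixing a pair $(k,l)$ with $l\le k$ and taking $n>1/t$ (the remaining case being irrelevant for the limit), I would first perform the elementary algebraic rewriting
\begin{equation*}
\sqrt{n}\left(\frac{\sum_{i=1}^{\lfloor nt\rfloor}\Delta^{n}_{i}X^{(k)}\Delta^{n}_{i}X^{(l)}}{\sum_{i=1}^{\lfloor nT\rfloor}\Delta^{n}_{i}X^{(k)}\Delta^{n}_{i}X^{(l)}}-\frac{\tilde R^{(k,l)}_{t,n}}{\tilde R^{(k,l)}_{T,n}}\right)=Z^{(k,l)}_{1,n}(t)\,Z^{(k,l)}_{2,n}(t),
\end{equation*}
where the $1\times2$ row-vector process $Z^{(k,l)}_{1,n}$ is built from $1/\tilde R^{(k,l)}_{T,n}$ together with a ratio of unnormalised realised covariations over $[0,t]$ and over $[0,T]$, and the $2\times1$ column vector $Z^{(k,l)}_{2,n}$ collects $\sqrt n$ times the centred normalised realised covariations $\tfrac1n\sum_{i\le\lfloor nt\rfloor}\tfrac{\Delta^n_iX^{(k)}}{\tilde\tau^{(k)}_n}\tfrac{\Delta^n_iX^{(l)}}{\tilde\tau^{(l)}_n}-\tilde R^{(k,l)}_{t,n}$ and its analogue at the endpoint $T$; this is exactly the manipulation carried out in the proof of Proposition~\ref{New-Feasible-relative-CLT}.

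Next I would treat the two factors separately. By the preceding relative-covolatility WLLN for $X$ one gets $Z^{(k,l)}_{1,n}\stackrel{u.c.p.}{\rightarrow}Z^{(k,l)}_{1}$, where $Z^{(k,l)}_1$ is the deterministic-coefficient limit obtained by replacing each realised covariation by the corresponding $\tilde R^{(k,l)}_{\cdot}$; note that, unlike the correlation-ratio case, the denominator $\tilde R^{(k,l)}_T$ does not depend on $t$, so there is no blow-up near $t=0$ and the statement holds on the whole $[0,T]$. By Theorem~\ref{I-Vech-2} together with the continuous mapping theorem (Theorem~\ref{Cotinuous mapping}) applied to the path-and-endpoint functional $\{x(t)\}_{t\in[0,T]}\mapsto\{(x(t),x(T))\}_{t\in[0,T]}$, which is continuous on $\mathcal D([0,T],\mathbb R)$ for the uniform metric, one obtains $Z^{(k,l)}_{2,n}\stackrel{st}{\rightarrow}\left(\int_0^t(V_s\textbf{D}^{1/2})_{(k,l)}dB^{(k,l)}_s,\ \int_0^T(V_s\textbf{D}^{1/2})_{(k,l)}dB^{(k,l)}_s\right)^{\top}$ in $\mathcal D([0,T],\mathbb R^2)$. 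Since $Z^{(k,l)}_{1,n}$ converges in probability and $Z^{(k,l)}_{2,n}$ converges stably, they converge jointly and stably (by the joint-convergence property of stable convergence stated in the Preliminaries), and a further application of the continuous mapping theorem to the pointwise bilinear product map yields the stable convergence for fixed $(k,l)$, whose limit simplifies to $\tfrac1{\tilde R^{(k,l)}_T}\int_0^t(V_s\textbf{D}^{1/2})_{(k,l)}dB^{(k,l)}_s-\tfrac{\tilde R^{(k,l)}_t}{(\tilde R^{(k,l)}_T)^2}\int_0^T(V_s\textbf{D}^{1/2})_{(k,l)}dB^{(k,l)}_s$.

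For the joint statement over all pairs $(k,l)$ with $l\le k$ I would argue exactly as in the proofs of Propositions~\ref{NEW-Feasible-corr} and \ref{New-Feasible-relative-CLT}: assemble the $Z^{(k,l)}_{1,n}$ into a diagonal matrix-valued process $\Theta_n$ of size $\tfrac p2(p+1)$, which converges u.c.p.\ by Lemma~\ref{lemma}; observe that the vector $\big(Z^{(k,l)}_{2,n}\big)_{k;\,l\le k}$ is the image of the single process in Theorem~\ref{I-Vech-2} under a fixed continuous functional (appending the endpoint value $(\cdot)(T)$ to each coordinate), hence converges stably and jointly; then combine $\Theta_n$ with this vector via the continuous mapping theorem. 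The independence of the limiting one-dimensional Brownian motions $B^{(k,l)}_s$ across distinct pairs is inherited from the block structure of $\textbf{D}$ in Theorem~\ref{I-Vech-2}, which itself follows from the assumed independence of the driving Brownian measures (so that $\tilde r^{(n)}_{k,m;l,w}(h)=0$ whenever $m\neq w$). I do not anticipate a genuinely hard step: all the ingredients are already in place, and the only care needed is (i) checking that the path-and-endpoint functional and the product map are continuous for the uniform metric on the relevant Skorokhod spaces, which is routine and markedly simpler than the square-root functional verified via Lemma~\ref{survey} in Proposition~\ref{NEW-Feasible-corr}, and (ii) keeping the indexing of the $\tfrac p2(p+1)$ coordinates and of the associated independent Brownian motions consistent with Theorem~\ref{I-Vech-2}.
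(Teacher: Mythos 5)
Your proposal is correct and follows essentially the same route as the paper: the paper's own proof of this proposition is literally ``it follows from the same arguments of Proposition \ref{New-Feasible-relative-CLT},'' and your write-up is a faithful elaboration of exactly that argument with the substitutions $X\leftrightarrow Y$, $\tilde\tau_n\leftrightarrow\bar\tau_n$, $\tilde R\leftrightarrow\bar R$, and Theorem \ref{I-Vech-2} in place of Theorem \ref{NEW-I-Vech}. Your additional observations — that the denominator $\tilde R^{(k,l)}_{T}$ is $t$-independent so no $\epsilon$-truncation is needed, and that the cross-pair independence of the $B^{(k,l)}$ comes from $\tilde r^{(n)}_{k,m;l,w}(h)=0$ for $m\neq w$ — are consistent with the paper's setup and do not change the approach.
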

\begin{proof}
It follows from the same arguments of Proposition \ref{New-Feasible-relative-CLT}.
\end{proof}
\begin{rem}
	Similar feasible results for general stationary multivariate Gaussian processes can be derived by just setting all the $\sigma$s to be equal to 1.
\end{rem}
\section{Example: the gamma kernel}\label{EXAMPLE}
I this section we are going to explore an example of the multivariate process $\{\textbf{G}_{t}\}_{t\in[0,T]}$. We will focus on the gamma kernel (see \cite{BCD}) because it plays a central role in the modelling of (atmospheric) turbulences, which is one of the main objective of the development of BSS processes. We will show that the process obtained from using the gamma kernel in the kernel matrix satisfies our assumptions. Consider the stochastic process $\{\textbf{G}_{t}\}_{t\in[0,T]}$ defined as
\begin{equation*}
\textbf{G}_{t}:=\begin{pmatrix}
     G^{(1)}_{t}   \\
     \vdots\\
     G^{(p)}_{t} 
    \end{pmatrix}=\int_{-\infty}^{t}\begin{pmatrix}
   g^{(1,1)}(t-s) & \dots & g^{(1,p)}(t-s)  \\
   \vdots & \ddots & \vdots\\
   g^{(p,1)}(t-s) & \dots & g^{(p,p)}(t-s)
  \end{pmatrix} \begin{pmatrix}
     dW^{(1)}_{s}   \\
     \vdots\\
     dW^{(p)}_{s} 
    \end{pmatrix},
\end{equation*}
where $g^{(i,j)}(t)=t^{\delta^{(i,j)}}e^{-\lambda^{(i,j)}t}\mathbf{1}_{[0,\infty)}(t)$ and $W^{(i)}$ are independent Gaussian $\mathcal{F}_{t}$-Brownian measures on $\mathbb{R}$, for $i,j=1,...,p$. Hence, we have $G^{(i)}_{t}=\sum_{j=1}^{p}\int_{-\infty}^{t}g^{(i,j)}(t-s)dW^{(j)}_{s}$ and
\begin{equation*}
\mathbb{E}\left[G^{(i)}_{t+h}G^{(j)}_{t} \right]=\sum_{l=1}^{p}\int_{-\infty}^{t}g^{(i,l)}(t+h-s)g^{(j,l)}(t-s)ds
\end{equation*}
\begin{equation*}
=\sum_{l=1}^{p}\int_{0}^{\infty}g^{(i,l)}(x+h)g^{(j,l)}(x)dx=\sum_{l=1}^{p}\int_{0}^{\infty}(x+h)^{\delta^{(i,l)}}e^{-\lambda^{(i,l)}(x+h)}x^{\delta^{(j,l)}}e^{-\lambda^{(j,l)}x}dx
\end{equation*}
\begin{equation*}
=\sum_{l=1}^{p}e^{-\lambda^{(i,l)}h}\int_{0}^{\infty}(x+h)^{\delta^{(i,l)}}x^{\delta^{(j,l)}}e^{-(\lambda^{(i,l)}+\lambda^{(j,l)})x}dx.
\end{equation*}
It is important to notice that if $\delta^{(i,j)}\in(-\frac{1}{2},0)\cup(0,\frac{1}{2})$ then $\int_{-\infty}^{t}g^{(i,j)}(t-s)dW^{(j)}_{s}$ it is not a semimartingale (see \cite{BCD} and \cite{Andrea1}). Furthermore, observe that 
\begin{equation*}
r_{i,j}^{(n)}(k):=\mathbb{E}\left[\dfrac{\Delta^{n}_{1}G^{(i)}}{\tau_{n}^{(i)}}\dfrac{\Delta^{n}_{1+k}G^{(j)}}{\tau_{n}^{(j)}} \right]=\frac{\mathbb{E}\left[G^{(i)}_{\frac{1}{n}}G^{(j)}_{\frac{1+k}{n}} \right]-\mathbb{E}\left[G^{(i)}_{\frac{1}{n}}G^{(j)}_{\frac{k}{n}} \right]-\mathbb{E}\left[G^{(i)}_{0}G^{(j)}_{\frac{1+k}{n}} \right]+\mathbb{E}\left[G^{(i)}_{0}G^{(j)}_{\frac{k}{n}} \right]}{\tau_{n}^{(i)}\tau_{n}^{(j)}}
\end{equation*}
\begin{equation*}
=\frac{\mathbb{E}\left[G^{(i)}_{\frac{1}{n}}G^{(j)}_{\frac{1+k}{n}} \right]-\mathbb{E}\left[G^{(i)}_{\frac{1}{n}}G^{(j)}_{\frac{k}{n}} \right]-\mathbb{E}\left[G^{(i)}_{0}G^{(j)}_{\frac{1+k}{n}} \right]+\mathbb{E}\left[G^{(i)}_{0}G^{(j)}_{\frac{k}{n}} \right]}{\left(\mathbb{E}\left[(G^{(i)}_{0})^{2} \right]+\mathbb{E}\left[(G^{(i)}_{\frac{1}{n}})^{2}\right]-2\mathbb{E}\left[G^{(i)}_{\frac{1}{n}}G^{(i)}_{0} \right]\right)^{1/2}\left(\mathbb{E}\left[(G^{(j)}_{0})^{2} \right]+\mathbb{E}\left[(G^{(j)}_{\frac{1}{n}})^{2}\right]-2\mathbb{E}\left[G^{(j)}_{\frac{1}{n}}G^{(j)}_{0} \right]\right)^{1/2}},
\end{equation*}
and by stationarity we have
\begin{equation*}
=\frac{2\mathbb{E}\left[G^{(i)}_{\frac{1}{n}}G^{(j)}_{\frac{1+k}{n}} \right]-\mathbb{E}\left[G^{(i)}_{\frac{1}{n}}G^{(j)}_{\frac{k}{n}} \right]-\mathbb{E}\left[G^{(i)}_{0}G^{(j)}_{\frac{1+k}{n}} \right]}{\left(2\mathbb{E}\left[(G^{(i)}_{0})^{2} \right]-2\mathbb{E}\left[G^{(i)}_{\frac{1}{n}}G^{(i)}_{0} \right]\right)^{\frac{1}{2}}\left(2\mathbb{E}\left[(G^{(j)}_{0})^{2} \right]-2\mathbb{E}\left[G^{(j)}_{\frac{1}{n}}G^{(j)}_{0} \right]\right)^{\frac{1}{2}}}
\end{equation*}
\begin{equation}\label{example}
=\frac{\sum_{l=1}^{p}\int_{0}^{\infty}2g^{(i,l)}(x+\frac{k}{n})g^{(j,l)}(x)-g^{(i,l)}(x+\frac{k-1}{n})g^{(j,l)}(x)-g^{(i,l)}(x+\frac{k+1}{n})g^{(j,l)}(x)dx}{2\left(\sum_{l=1}^{p}\int_{0}^{\infty}\left(g^{(i,l)}(x)\right)^{2}-g^{(i,l)}(x+\frac{1}{n})g^{(i,l)}(x)dx\right)^{\frac{1}{2}}\left(\sum_{l=1}^{p}\int_{0}^{\infty}\left(g^{(j,l)}(x)\right)^{2}-g^{(j,l)}(x+\frac{1}{n})g^{(j,l)}(x)dx\right)^{\frac{1}{2}}}
\end{equation}
using the results of Section 7.2 of \cite{Andrea1} and Appendix of \cite{BCD-Creates} we have that the numerator is given by
\begin{equation*}
\sum_{l=1}^{p}K_{1}^{(i,j)}e^{-\lambda^{(j,l)}\frac{k}{n}}\sum_{r=0}^{\infty}\frac{(1+\delta^{(j,l)})_{r}}{(\delta^{(i,l)}+\delta^{(j,l)}+2)_{r}}\frac{1}{r!}\left(\lambda^{(i,l)}+\lambda^{(j,l)} \right)^{r}\Bigg(2\left(\frac{k}{n}\right)^{r+\delta^{(i,l)}+\delta^{(i,l)}+1}
\end{equation*}
\begin{equation*}
-\left(\frac{k-1}{n}\right)^{r+\delta^{(i,l)}+\delta^{(i,l)}+1}e^{\lambda^{(i,l)}\frac{1}{n}}-\left(\frac{k+1}{n}\right)^{r+\delta^{(i,l)}+\delta^{(i,l)}+1}e^{-\lambda^{(i,l)}\frac{1}{n}}\Bigg)
\end{equation*}
\begin{equation*}
+\sum_{l=1}^{p}K_{2}^{(i,j),(l)}e^{-\lambda^{(i,l)}\frac{k}{n}}\sum_{r=0}^{\infty}\frac{(\delta^{(i,l)})_{r}}{(\delta^{(i,l)}+\delta^{(j,l)})_{r}}\frac{1}{r!}\left(\lambda^{(i,l)}+\lambda^{(j,l)}\right)^{r}\Bigg(2\left(\frac{k}{n}\right)^{r}
\end{equation*}
\begin{equation*}
-\left(\frac{k-1}{n}\right)^{r}e^{\lambda^{(i,l)}\frac{1}{n}}-\left(\frac{k+1}{n}\right)^{r}e^{-\lambda^{(i,l)}\frac{1}{n}}\Bigg),
\end{equation*}
\begin{equation*}
\text{where}\quad
K_{1}^{(i,j)}:=\frac{\Gamma\left(\delta^{(j,l)}+1 \right)\Gamma\left(-1-\delta^{(j,l)}-\delta^{(i,l)} \right)}{\Gamma\left(-\delta^{(j,l)}\right)}\quad
\text{and}\quad
K_{2}^{(i,j),(l)}:=\frac{\Gamma\left(\delta^{(j,l)}+\delta^{(i,l)}+1 \right)}{\left(\lambda^{(i,l)}+\lambda^{(j,l)} \right)^{\delta^{(j,l)}+\delta^{(i,l)}+1}}.
\end{equation*}
Assume that $\delta^{(j,l)}=\delta^{(i,k)}$ for any $i,j,l,k=1,...,p$. Then  $\delta:=\delta^{(i,j)}$ and $K_{1}:=K_{1}^{(i,j)}$ for any $i,j=1,...,p$ and the numerator can be rewritten as
\begin{equation*}
K_{1}\left(\frac{1}{n}\right)^{2\delta+1}\sum_{l=1}^{p}e^{-\lambda^{(j,l)}\frac{k}{n}}\sum_{r=0}^{\infty}\frac{(1+\delta)_{r}}{(2\delta+2)_{r}}\frac{1}{r!}\left(\lambda^{(i,l)}+\lambda^{(j,l)} \right)^{r}\left(\frac{1}{n}\right)^{r}\bigg(2k^{r+2\delta+1}
\end{equation*}
\begin{equation*}
-\left(k-1\right)^{r+2\delta+1}e^{\lambda^{(i,l)}\frac{1}{n}}-\left(k+1\right)^{r+2\delta+1}e^{-\lambda^{(i,l)}\frac{1}{n}}\bigg)
\end{equation*}
\begin{equation*}
+\sum_{l=1}^{p}K_{2}^{(i,j),(l)}e^{-\lambda^{(i,l)}\frac{k}{n}}\sum_{r=0}^{\infty}\frac{(\delta)_{r}}{(2\delta)_{r}}\frac{1}{r!}\left(\lambda^{(i,l)}+\lambda^{(j,l)}\right)^{r}\left(\frac{1}{n}\right)^{r}\left(2k^{r}
-\left(k-1\right)^{r}e^{\lambda^{(i,l)}\frac{1}{n}}-\left(k+1\right)^{r}e^{-\lambda^{(i,l)}\frac{1}{n}}\right),
\end{equation*}
and using the results of section 7.2 of \cite{Andrea1} we get
\begin{equation*}
=K_{1}\left(\frac{1}{n}\right)^{2\delta+1}\sum_{l=1}^{p}e^{-\lambda^{(j,l)}\frac{k}{n}}\bar{f}^{(1)}_{i,l}\left(\frac{1}{n}\right)+\sum_{l=1}^{p}K_{2}^{(i,j),(l)}e^{-\lambda^{(i,l)}\frac{k}{n}}\bar{f}^{(2)}_{i,l}\left(\frac{1}{n}\right),
\end{equation*}
\begin{equation*}
\text{where}\quad\bar{f}^{(1)}_{i,l}(n):=\sum_{r=0}^{\infty}\frac{(1+\delta)_{r}}{(2\delta+2)_{r}}\frac{1}{r!}\left(\lambda^{(i,l)}+\lambda^{(j,l)} \right)^{r}\left(\frac{1}{n}\right)^{r}
\end{equation*}
\begin{equation*}
\left(2k^{r+2\delta+1}-\left(k-1\right)^{r+2\delta+1}e^{\lambda^{(i,l)}\frac{1}{n}}-\left(k+1\right)^{r+2\delta+1}e^{-\lambda^{(i,l)}\frac{1}{n}}\right)
\end{equation*}
\begin{equation*}
\text{and}\quad\bar{f}^{(2)}_{i,l}(n):=\sum_{r=0}^{\infty}\frac{(\delta)_{r}}{(2\delta)_{r}}\frac{1}{r!}\left(\lambda^{(i,l)}+\lambda^{(j,l)}\right)^{r}\left(\frac{1}{n}\right)^{r}\left(2k^{r}
-\left(k-1\right)^{r}e^{\lambda^{(i,l)}\frac{1}{n}}-\left(k+1\right)^{r}e^{-\lambda^{(i,l)}\frac{1}{n}}\right).
\end{equation*}
It is possible to notice that $\lim\limits_{n\rightarrow\infty}\bar{f}^{(1)}_{i,l}(n)=2k^{2\delta+1}-\left(k-1\right)^{2\delta+1}-\left(k+1\right)^{2\delta+1}$ and $\lim\limits_{n\rightarrow\infty}\bar{f}^{(1)}_{i,l}(n)=0$.

Moreover, regarding the denominator we observe that
\begin{equation*}
\sum_{l=1}^{p}\int_{0}^{\infty}\left(g^{(i,l)}(x)\right)^{2}-g^{(i,l)}\left(x+\frac{1}{n}\right)g^{(i,l)}(x)dx=\sum_{l=1}^{p}\frac{\Gamma(2\delta+1)}{\left(2\lambda^{(i,l)}\right)^{2\delta+1}}-
\end{equation*}
\begin{equation*}
e^{(\lambda^{(i,l)})\frac{k}{2n}}\pi^{-\frac{1}{2}}\Gamma\left(\delta+1\right)\left(2\lambda^{(i,l)}\right)^{-\delta-\frac{1}{2}}\left(\frac{k}{n}\right)^{\delta+\frac{1}{2}}K_{\delta+\frac{1}{2}}\left(\left(2\lambda^{(i,l)}\right)\frac{k}{n}\right),
\end{equation*}
\begin{equation*}
=\left(\frac{k}{n}\right)^{2\delta+1}\sum_{l=1}^{p}2K_{1}e^{(\lambda^{(i,l)})\frac{k}{n}}f^{(1)}_{i,l}\left(\frac{k}{n}\right)+f^{(5)}_{i,l}\left(\frac{k}{n}\right),
\end{equation*}
using the results of section 7.2 of \cite{Andrea1}, where $f^{(1)}_{i,l}$ is a power series such that $\lim\limits_{n\rightarrow\infty}f^{(1)}_{i,l}\left(\frac{k}{n}\right)=1$ and $f^{(5)}_{i,l}\left(\frac{k}{n}\right)=O\left( \left(\frac{k}{n}\right)^{1-2\delta}\right)$, hence $\lim\limits_{n\rightarrow\infty}f^{(5)}_{i,l}=0$ if and only if $\delta<\frac{1}{2}$. Therefore, we have that $(\ref{example})$ can be written as follows:
\begin{equation*}
\frac{1}{2}\frac{K_{1}\left(\frac{1}{n}\right)^{2\delta+1}\sum_{l=1}^{p}e^{-\lambda^{(i,l)}\frac{k}{n}}\bar{f}^{(1)}_{i,l}\left(\frac{1}{n}\right)+\sum_{l=1}^{p}K_{2}^{(i,j),(l)}e^{-\lambda^{(i,l)}\frac{k}{n}}\bar{f}^{(2)}_{i,l}\left(\frac{1}{n}\right)}{\left(\left(\frac{k}{n}\right)^{2\delta+1}\sum_{l=1}^{p}2K_{1}e^{(\lambda^{(i,l)})\frac{k}{n}}f^{(1)}_{i,l}\left(\frac{k}{n}\right)+f^{(5)}_{i,l}\left(\frac{k}{n}\right)\right)^{\frac{1}{2}}\left(\left(\frac{k}{n}\right)^{2\delta+1}\sum_{l=1}^{p}2K_{1}e^{(\lambda^{(j,l)})\frac{k}{n}}f^{(1)}_{j,l}\left(\frac{k}{n}\right)+f^{(5)}_{j,l}\left(\frac{k}{n}\right)\right)^{\frac{1}{2}}}
\end{equation*}
\begin{equation*}
=\frac{1}{2}\frac{K_{1}\sum_{l=1}^{p}e^{-\lambda^{(i,l)}\frac{k}{n}}\bar{f}^{(1)}_{i,l}\left(\frac{1}{n}\right)+\sum_{l=1}^{p}K_{2}^{(i,j),(l)}e^{-\lambda^{(i,l)}\frac{k}{n}}\bar{f}^{(2)}_{i,l}\left(\frac{1}{n}\right)}{\left(k^{2\delta+1}\sum_{l=1}^{p}2K_{1}e^{(\lambda^{(i,l)})\frac{k}{n}}f^{(1)}_{i,l}\left(\frac{k}{n}\right)+f^{(5)}_{i,l}\left(\frac{k}{n}\right)\right)^{\frac{1}{2}}\left(k^{2\delta+1}\sum_{l=1}^{p}2K_{1}e^{(\lambda^{(j,l)})\frac{k}{n}}f^{(1)}_{j,l}\left(\frac{k}{n}\right)+f^{(5)}_{j,l}\left(\frac{k}{n}\right)\right)^{\frac{1}{2}}}.
\end{equation*}
Now, assuming that $\delta<\frac{1}{2}$, by taking the limit as $n\rightarrow\infty$, we obtain that
\begin{equation}\label{limit}
\Rightarrow\frac{1}{2}\frac{pK_{1}\left(2k^{2\delta+1}-\left(k-1\right)^{2\delta+1}-\left(k+1\right)^{2\delta+1}\right)}{k^{2\delta+1}2pK_{1}}=\frac{2k^{2\delta+1}-\left(k-1\right)^{2\delta+1}-\left(k+1\right)^{2\delta+1}}{4k^{2\delta+1}}
\end{equation}
since $\left(r^{(n)}_{i,j}(k)\right)^{2}$ is a continuous function of $r^{(n)}_{i,j}(k)$ and the limit $(\ref{limit})$ is finite then
\begin{equation*}
\lim\limits_{n\rightarrow\infty}\left(r^{(n)}_{i,j}(k)\right)^{2}=\frac{\left(2k^{2\delta+1}-\left(k-1\right)^{2\delta+1}-\left(k+1\right)^{2\delta+1}\right)^{2}}{16k^{4\delta+2}}.
\end{equation*}
In order to see that Assumption \ref{2} is satisfied we need that $\sum_{k=1}^{\infty}\frac{\left(2k^{2\delta+1}-\left(k-1\right)^{2\delta+1}-\left(k+1\right)^{2\delta+1}\right)^{2}}{16k^{4\delta+2}}<\infty$. Let us see for which values of $\delta$ this holds. Define $x:=2\delta+1$ then we have that using the generalised Binomial theorem
\begin{equation}\label{series}
2k^{x}-\left(k-1\right)^{x}-\left(k+1\right)^{x}=\sum_{s=0}^{\infty}\binom{x}{s}\left[2(k-1)^{s}-k^{s}-(k-2)^{s}\right],
\end{equation}
where
\begin{equation*}
2(k-1)^{s}-k^{s}-(k-2)^{s}=\sum_{n=0}^{s}\binom{s}{n}\left[2k^{s-n}(-1)^{n}-k^{s-n}(-2)^{n} \right]-k^{s}
\end{equation*}
\begin{equation*}
=\sum_{n=1}^{s}\binom{s}{n}k^{s-n}\left[2(-1)^{n}-(-2)^{n} \right]=\sum_{n=2}^{s}\binom{s}{n}k^{s-n}\left[2(-1)^{n}-(-2)^{n} \right]
\end{equation*}
\begin{equation*}
\leq k^{s-2}\sum_{n=2}^{s}\binom{s}{n}\left[2+2^{n} \right]\leq k^{s-2}\sum_{n=0}^{s}\binom{s}{n}\left[2+2^{n} \right]=k^{s-2}\left(2^{s+1}+3^{s} \right)\leq 2 k^{s-2}4^{s}.
\end{equation*}
Hence, when we plug this into $(\ref{series})$ we get
\begin{equation*}
\sum_{s=0}^{\infty}\binom{x}{s}\left[2(k-1)^{s}-k^{s}-(k-2)^{s}\right]\leq \sum_{s=0}^{\infty}\binom{x}{s}2 k^{s-2}4^{s}=2k^{-2}\sum_{s=0}^{\infty}\binom{x}{s}(4k)^{s}=2k^{-2}(4k+1)^{x}
\end{equation*}
Therefore, we have that
\begin{equation*}
\sum_{k=1}^{\infty}\frac{\left(2k^{x}-\left(k-1\right)^{x}-\left(k+1\right)^{x}\right)^{2}}{16k^{2x}}=\frac{1}{4}\sum_{k=1}^{\infty}\frac{(4k+1)^{2x}}{k^{2x+4}}
\end{equation*}
which converges for any $x\in\mathbb{R}$ and hence for any $\delta\in\mathbb{R}$. However, recall that in our computations we have assumed that $\delta<\frac{1}{2}$.

Concerning Assumption \ref{1}, we have the following. Let $\epsilon_{n}=n^{-\kappa}$, $\kappa\in(0,1)$, then
\begin{equation*}
\pi_{n}^{(m,l)}((n^{-\kappa},\infty))=\frac{\int_{n^{-\kappa}}^{\infty}\left(g^{(m,l)}(s+\Delta_{n})-g^{(m,l)}(s)\right)^{2}ds}{\int_{0}^{\infty}\left(g^{(m,l)}(s+\Delta_{n})-g^{(m,l)}(s)\right)^{2}ds}.
\end{equation*}
Given the fact that our $\pi_{n}^{(m,l)}$ has the same structure as $\pi_{n}$ in \cite{BCD}, then the same arguments used in \cite{BCD} hold here and we can conclude that if $\delta\in\left(-\frac{1}{2},\frac{1}{2}\right)$ then Assumption \ref{1} is satisfied.

Combining the ranges obtained, we conclude that when $\delta\in\left(-\frac{1}{2},\frac{1}{2}\right)$ then all the results presented in this work apply to our example.
\section{Conclusion}\label{CONCLUSION}
In this paper we introduced the multivariate BSS process and studied the joint asymptotic behaviour of its realised covariation, presenting limit theorems, feasible results and an explicit example. We also provided central limit theorems and weak laws of large numbers for general stationary multivariate Gaussian processes. There are at least two directions which will be worth exploring in more detail in the future:

First, is it possible to find feasible estimates for the asymptotic variance of the multivariate BSS processes? \textit{I.e}.~can ``second order" feasible results be obtained in addition to the ``first order" results we already presented?

Second, we considered the asymptotic theory for BSS processes outside the semimartingale setting. In doing so, we concentrate on a particular scenario (as described by the assumptions on the deterministic function $g$ in Assumption \ref{1}). However, one can imagine other scenarios which lead to BSS processes (or other volatility modulated Gaussian processes) beyond the semimartingale framework. Can similar asymptotic results for the (scaled) realised covariation be obtained in such settings?
\section*{Appendix: the matrices D and V for the BSS process}\label{Remark1}
In this appendix we are going to specify the explicit structure and value of the matrices $\textbf{D}$ and $\textbf{V}_{s}$. The reason why we put them into the appendix is that in order to present them we need some combinatorial arguments which are easy but tedious, and they are similar for the different cases presented in Chapter \ref{CLT-BSS-chapter}.
\subsection*{Case I}
Let $\textit{\textbf{D}}^{1/2}\in\mathcal{M}^{p^{6}\times p^{6}}(\mathbb{R})$ be defined as 
\begin{equation*}
(\textit{\textbf{D}})_{z,y}:=\lim\limits_{n\rightarrow\infty}\frac{2}{n}\sum_{h=1}^{n-1}(n-h)
\end{equation*}
\begin{equation*}
\left(r_{k_{z},r_{z},m_{z};k_{y},r_{y},m_{y}}^{(n)}(h)r_{l_{z},q_{z},w_{z};l_{y},q_{y},w_{y}}^{(n)}(h)+r_{l_{z},q_{z},w_{z};k_{y},r_{y},m_{y}}^{(n)}(h)r_{k_{z},r_{z},m_{z};l_{y},q_{y},w_{y}}^{(n)}(h)\right)
\end{equation*}
\begin{equation*}
+\left(r_{k_{z},r_{z},m_{z};k_{y},r_{y},m_{y}}^{(n)}(0)r_{l_{z},q_{z},w_{z};l_{y},q_{y},w_{y}}^{(n)}(0)+r_{l_{z},q_{z},w_{z};k_{y},r_{y},m_{y}}^{(n)}(0)r_{k_{z},r_{z},m_{z};l_{y},q_{y},w_{y}}^{(n)}(0)\right),
\end{equation*}
where for each of the $p^{6}\times p^{6}$ combinations of $(z,y)$ there is a unique combination of $((r_{z},m_{z},q_{z},w_{z},k_{z},l_{z})$, $(r_{y},m_{y},q_{y},w_{y},k_{y},l_{y}))$ where each of these elements takes value in $\{1,...,p\}$. Let $\nu(r,m,q,w)$ be the set of all the possible combinations of $r,m,q,w\in\{1,...,p\}$ and let $\nu_{s}(r,m,q,w)$ determines the $s$ element of $\nu(r,m,q,w)$. Notice that $\nu(r,m,q,w)$ has certain order for its element, which is not relevant for us since we only care about the consistent use of the order adopted. Then the association is given by
\begin{equation*}
(z,y)\leftrightarrow\Bigg(\Big(\nu_{z-\lfloor\frac{z-1}{p^{4}}\rfloor p^{4}}(r,m,q,w),\lfloor\frac{\lfloor\frac{z-1}{p^{4}}\rfloor}{p}\rfloor+1,\lfloor\frac{z-1}{p^{4}}\rfloor+1-p\lfloor\frac{\lfloor\frac{z-1}{p^{4}}\rfloor}{p}\rfloor\Big),
\end{equation*}
\begin{equation*}
\Big(\nu_{y-\lfloor\frac{y-1}{p^{4}}\rfloor p^{4}}(r,m,q,w),\lfloor\frac{\lfloor\frac{y-1}{p^{4}}\rfloor}{p}\rfloor+1,\lfloor\frac{y-1}{p^{4}}\rfloor+1-p\lfloor\frac{\lfloor\frac{y-1}{p^{4}}\rfloor}{p}\rfloor\Big)\Bigg).
\end{equation*}
For a proof of this statement for the case $p=2$ see the proof of Theorem \ref{I}; the extension to the case $p>2$ is trivial. Moreover, define for $s\in[0,T]$ the $p^{2}\times p^{6}$ matrix
\begin{equation}\label{V}
V_{s}:=\begin{pmatrix}
   \sigma_{s} & \textbf{0} &\cdots  & \textbf{0} \\ \textbf{0} &
   \ddots &  & \vdots \\ \vdots &  & \ddots & \textbf{0}\\ \textbf{0} & \cdots& \textbf{0} & \sigma_{s}    
  \end{pmatrix},
\end{equation}
where $\sigma_{s}:=\left(\sigma^{(r,m)}_{s}\sigma^{(q,w)}_{s}\right)_{(r,m,q,w)\in\nu(r,m,q,w)}^{\top}$, so it is a row vector of $p^{4}$ elements (here the consistency of the order of the elements of $\nu(r,m,q,w)$ is fundamental), and $\textbf{0}$ is a row vector of $p^{4}$ elements containing only zeros. Hence, $\sigma_{s}$ and $\mathbf{0}$ contain the same number of elements.

In case of the \textit{vech} notation, the association for $\textit{\textbf{D}}$ is the following. First, let us define for $i\in\mathbb{N}$
\begin{equation*}
\chi(i):=\lfloor\sqrt{2i}+\frac{1}{2} \rfloor ,\quad\xi(i):=i-\frac{1}{2}\lfloor\frac{\sqrt{8i-7}-1}{2} \rfloor\left(\lfloor\frac{\sqrt{8i-7}-1}{2} \rfloor+1\right),
\end{equation*}
then we have
\begin{equation*}
(z,y)\leftrightarrow\Bigg(\Big(\nu_{z-\lfloor\frac{z-1}{p^{4}}\rfloor p^{4}}(r,m,q,w),\chi\left(\lfloor\frac{z-1}{p^{4}}\rfloor\right),\xi\left(\lfloor\frac{z-1}{p^{4}}\rfloor\right)\Big),
\end{equation*}
\begin{equation}\label{association}
\Big(\nu_{y-\lfloor\frac{y-1}{p^{4}}\rfloor p^{4}}(r,m,q,w),\chi\left(\lfloor\frac{y-1}{p^{4}}\rfloor\right),\xi\left(\lfloor\frac{y-1}{p^{4}}\rfloor\right)\Big)\Bigg)
\end{equation}
The association $(\ref{association})$ comes from the fact that we have $k=1,...,p$ with $k\leq l$. The couple $(k,l)$ has the following sequence $(1,1),(2,1),(2,2),(3,1),(3,2),(3,3),(4,1),(4,2),(4,3),(4,4),(5,1),...$ which is a fractal sequence of form given by $(\xi(i),\chi(i))$ where $i$ is the $i$-th term of the sequence. Concerning the matrix $V_{s}$, we have the same structure as (\ref{V}). However, the dimension is now $\frac{p}{2}(p+1)\times \frac{p^{5}}{2}(p+1)$. The value of the $\sigma_{s}$ and of the $\mathbf{0}$ is the same as before and, hence, we are considering fewer of them (\textit{e.g}.~there are $\frac{p}{2}(p+1)$ number of $\sigma_{s}$ while in (\ref{V}) we had $p^{2}$ number of them).
\subsection*{Case II first scenario}
We have the same as in Case I. The only difference is the use of $\bar{r}^{(n)}$ instead of $r^{(n)}$.
\subsection*{Case II second scenario}
For this case we have something similar to the previous sections but simpler since there are not any more the variables $r$ and $q$. Let $\textit{\textbf{D}}^{1/2}\in\mathcal{M}^{p^{4}\times p^{4}}(\mathbb{R})$ be defined as
\begin{equation*}
(\textit{\textbf{D}})_{z,y}:=\lim\limits_{n\rightarrow\infty}\frac{2}{n}\sum_{h=1}^{n-1}(n-h)\left(\tilde{r}_{k_{z},m_{z};k_{y},m_{y}}^{(n)}(h)\tilde{r}_{l_{z},w_{z};l_{y},w_{y}}^{(n)}(h)+\tilde{r}_{l_{z},w_{z};k_{y},m_{y}}^{(n)}(h)\tilde{r}_{k_{z},m_{z};l_{y},w_{y}}^{(n)}(h)\right)
\end{equation*}
\begin{equation*}
+\left(\tilde{r}_{k_{z},m_{z};k_{y},m_{y}}^{(n)}(0)\tilde{r}_{l_{z},w_{z};l_{y},w_{y}}^{(n)}(0)+\tilde{r}_{l_{z},w_{z};k_{y},m_{y}}^{(n)}(0)\tilde{r}_{k_{z},m_{z};l_{y},w_{y}}^{(n)}(0)\right),
\end{equation*}
where for each of the $p^{4}\times p^{4}$ combinations of $(z,y)$ there is a unique combination of\\ $((m_{z},w_{z},k_{z},l_{z}),(m_{y},w_{y},k_{y},l_{y}))$. The association is given by
\begin{equation*}
(z,y)\leftrightarrow\Bigg(\Big(\mu_{z-\lfloor\frac{z-1}{p^{2}}\rfloor p^{2}}(m,w),\lfloor\frac{\lfloor\frac{z-1}{p^{2}}\rfloor}{p}\rfloor+1,\lfloor\frac{z-1}{p^{2}}\rfloor+1-p\lfloor\frac{\lfloor\frac{z-1}{p^{2}}\rfloor}{p}\rfloor\Big),
\end{equation*}
\begin{equation*}
\Big(\mu_{y-\lfloor\frac{y-1}{p^{2}}\rfloor p^{2}}(m,w),\lfloor\frac{\lfloor\frac{y-1}{p^{2}}\rfloor}{p}\rfloor+1,\lfloor\frac{y-1}{p^{2}}\rfloor+1-p\lfloor\frac{\lfloor\frac{y-1}{p^{2}}\rfloor}{p}\rfloor\Big)\Bigg).
\end{equation*}
where $\mu(m,w)$ is the set of all the possible combinations of $m,w\in\{1,...,p\}$ and $\mu_{s}(m,w)$ is the $s$ element of $\mu(m,w)$. Moreover, define for $s\in[0,T]$ the $p^{2}\times p^{4}$ matrix
\begin{equation}\label{lastV}
V_{s}:=\begin{pmatrix}
   \sigma_{(1,1),s} & \textbf{0} &\cdots  & \textbf{0} \\ \textbf{0} &
   \ddots &  & \vdots \\ \vdots &  & \ddots & \textbf{0}\\ \textbf{0} & \cdots& \textbf{0} & \sigma_{(p,p),s}

  \end{pmatrix},
\end{equation}
where $\sigma_{(k,l),s}:=\left(\sigma^{(k,m)}_{s}\sigma^{(l,w)}_{s}\right)_{m,w\in\mu(m,w)}^{\top}$ and $\textbf{0}$ is a row vector of $p^{2}$ elements containing only zeros.

For the \textit{vech} notation we have that the structure of the matrices $\textit{\textbf{D}}$ and $V$ remain the same but their dimension reduce. Similarly to the previous cases, the association is given by
\begin{equation*}
(z,y)\leftrightarrow\Bigg(\Big(\mu_{z-\lfloor\frac{z-1}{p^{2}}\rfloor p^{2}}(m,w),\chi\left(\lfloor\frac{z-1}{p^{4}}\rfloor\right),\xi\left(\lfloor\frac{z-1}{p^{4}}\rfloor\right)\Big),
\end{equation*}
\begin{equation*}
\Big(\mu_{y-\lfloor\frac{y-1}{p^{2}}\rfloor p^{2}}(m,w),\chi\left(\lfloor\frac{y-1}{p^{4}}\rfloor\right),\xi\left(\lfloor\frac{y-1}{p^{4}}\rfloor\right)\Big)\Bigg).
\end{equation*}
Concerning the matrix $V_{s}$, we have the same structure as (\ref{lastV}). However, the dimension is now $\frac{p}{2}(p+1)\times \frac{p^{3}}{2}(p+1)$. The value of the $\sigma_{s}$ and of the $\mathbf{0}$ is the same as before and, hence, we are considering fewer of them (\textit{e.g}.~there are $\frac{p}{2}(p+1)$ number of $\sigma_{s}$ while in (\ref{lastV}) we had $p^{2}$ number of them).
\section*{Acknowledgement}
The authors would like to thank Andrea Granelli, Mikko Pakkanen and Mark Podolskij for useful discussions and comments. RP acknowledges the financial support of the CDT in MPE and the Grantham Institute.

\end{document}